




\documentclass[12pt]{amsart}
\usepackage{amssymb}
\usepackage{amsmath, amscd}
\usepackage{amsthm}
\usepackage{comment}
\usepackage[table,xcdraw]{xcolor}
\usepackage{ulem}
\usepackage{tikz}
\usepackage{float}
\usepackage{graphicx}
\usepackage{circuitikz}
\usepackage[colorlinks=true, linkcolor=blue,urlcolor=blue]{hyperref}

\newtheorem{theorem}{Theorem}[section]

\newtheorem{proposition}[theorem]{Proposition}
\newtheorem{lemma}[theorem]{Lemma}

\newtheorem{corollary}[theorem]{Corollary}
\theoremstyle{definition}

\newtheorem{example}[theorem]{Example}
\newtheorem{definition}[theorem]{Definition}

\newtheorem{remark}[theorem]{Remark}

\newtheorem{problem}[theorem]{Problem}


\topmargin0cm \headheight0cm \headsep1cm \topskip0cm \textheight23cm \footskip1.8cm \textwidth15cm

\oddsidemargin0cm \evensidemargin0cm

\parindent15pt







\topmargin0cm \headheight0cm \headsep1cm \topskip0cm \textheight23cm \footskip1.8cm \textwidth15cm

\oddsidemargin0cm \evensidemargin0cm

\parindent15pt

\begin{document}
	
\author[P. Danchev]{Peter Danchev}
\address{Institute of Mathematics and Informatics, Bulgarian Academy of Sciences, 1113 Sofia, Bulgaria}
\email{danchev@math.bas.bg; pvdanchev@yahoo.com}
\author[A. Javan]{Arash Javan}
\address{Department of Mathematics, Tarbiat Modares University, 14115-111 Tehran Jalal AleAhmad Nasr, Iran}
\email{a.darajavan@modares.ac.ir; a.darajavan@gmail.com}
\author[O. Hasanzadeh]{Omid Hasanzadeh}
\address{Department of Mathematics, Tarbiat Modares University, 14115-111 Tehran Jalal AleAhmad Nasr, Iran}
\email{o.hasanzade@modares.ac.ir; hasanzadeomiid@gmail.com}
\author[M. Doostalizadeh]{Mina Doostalizadeh}
\address{Department of Mathematics, Tarbiat Modares University, 14115-111 Tehran Jalal AleAhmad Nasr, Iran}
\email{d\_mina@modares.ac.ir;  m.doostalizadeh@gmail.com}
\author[A. Moussavi]{Ahmad Moussavi}
\address{Department of Mathematics, Tarbiat Modares University, 14115-111 Tehran Jalal AleAhmad Nasr, Iran}
\email{moussavi.a@modares.ac.ir; moussavi.a@gmail.com}

\title[$n$-$\Delta$U rings]{Rings such that, for each unit $u$, $u^n-1$ belongs to the $\Delta(R)$}
\keywords{$n$-$\Delta$U ring, $\Delta$U ring, $n$-JU ring, JU ring, (semi-)regular ring, clean ring}
\subjclass[2010]{16S34, 16U60}

\maketitle




\begin{abstract}
We study in-depth those rings $R$ for which, there exists a fixed $n\geq 1$, such that $u^n-1$ lies in the subring $\Delta(R)$ of $R$ for every unit $u\in R$. We succeeded to describe for any $n\geq 1$ all reduced $\pi$-regular $(2n-1)$-$\Delta$U rings by showing that they satisfy the equation $x^{2n}=x$ as well as to prove that the property of being exchange and clean are tantamount in the class of $(2n-1)$-$\Delta$U rings. These achievements considerably extend results established by Danchev (Rend. Sem. Mat. Univ. Pol. Torino, 2019) and Ko\c{s}an et al. (Hacettepe J. Math. \& Stat., 2020). Some other closely related results of this branch are also established.  	
\end{abstract}

\section{Introduction and Motivation}

In this paper, let $R$ denote an associative ring with identity element, which is {\it not} necessarily commutative. For such a ring $R$, the sets $U(R)$, $Nil(R)$, $C(R)$ and $Id(R)$ represent the set of invertible elements, the set of nilpotent elements, the set of central elements, and the set of idempotent elements in $R$, respectively. Additionally, $J(R)$ denotes the Jacobson radical of $R$. The ring of \( n \times n \) matrices over \( R \) and the ring of \( n \times n \) upper triangular matrices over \( R \) are denoted by \( {\rm M}_n(R) \) and \( {\rm T}_n(R) \), respectively. A ring is termed {\it abelian} if each its idempotent element is central.

The main instrument of the present article plays the set $\Delta(R)$, which was introduced by Lam in \cite [Exercise 4.24]{18} and recently studied by Leroy-Matczuk in \cite{2}. As pointed out by the authors in \cite [Theorem 3 and 6]{2}, $\Delta(R)$ is the largest Jacobson radical's subring of $R$ which is closed with respect to multiplication by all units (quasi-invertible elements) of $R$. Also, $J(R) \subseteq \Delta(R)$. Moreover, $\Delta(R)=J(T)$, where $T$ is the subring of $R$ generated by units of $R$, and the equality $\Delta(R)=J(R)$ holds if, and only if, $\Delta(R)$ is an ideal of $R$. An element $a$ in a ring $R$ is from $\Delta(R)$ if $1-ua$ is invertible for all invertible $u\in R$.

A ring $R$ is said to be {\it $n$-UJ} provided $u-u^n\in J(R)$ for each unit $u$ of $R$, where $n\geq 2$ is a fixed integer; that is, for any $u\in U(R)$, $u^n-1\in J(R)$. This notion was initially introduced by Danchev in \cite{Dannew} on 2019 and after that, hopefully independently, by Ko\c{s}an et al. in \cite{3} on 2020; note that these rings are a common generalization for $n=1$ of the so-termed {\it JU} rings which were firstly defined by Danchev in \cite{Danew} on 2016 and later redefined in \cite{KLM} on 2018 under the name {\it UJ} rings. They showed that for $(2n)$-UJ rings the notions of semi-regular, exchange and clean rings are equivalent.

Likewise, letting $n\ge 2$ be fixed, a ring $R$ is called {\it $n$-UU} if, for any $u\in U(R)$, $u^n-1\in Nil(R)$. This concept was introduced by Danchev (see \cite{4}), and furthermore studied in more details in \cite{19}. In \cite{19} the authors showed that a ring $R$ is ($n-1$)-UU and strongly $\pi$-regular if, and only if, $R$ is strongly $n$-nil-clean (that is, the sum of an $n$-potent and a nilpotent which commute each other).

A ring $R$ is said to be {\it regular} (resp., {\it unit-regular}) in the sense of von Neumann if, for every $a\in R$, there is $x\in R$ (resp., $x\in U(R)$) such that $axa=a$, and $R$ is said to be {\it strongly regular} if, for every $a\in R$, $a\in a^2R$. Recall also that a ring $R$ is {\it exchange} if, for each $a\in R$, there exists $e^2=e\in aR$ such that $1-e\in (1-a)R$, and a ring $R$ is {\it clean} if every element of $R$ is a sum of an idempotent and an unit (cf. \cite{5}). Notice that every clean ring is exchange, but the converse is manifestly {\it not} true in general; however, it is true in the abelian case (see \cite[Proposition 1.8]{5}). In this aspect, a ring $R$ is called {\it semi-regular} provided $R/J(R)$ is regular and idempotents lift modulo $J(R)$. It is well known that semi-regular rings are always exchange, but the opposite is generally untrue (see, for instance, \cite{5}). In 2019, Fatih Karabacak et al. introduced new rings that are a proper expansion of $UJ$ rings. They called these rings $\Delta U$ in \cite{1}, namely a ring $R$ is said to be {\it $\Delta$U} if $1+\Delta(R)=U(R)$.

So, as a possible non-trivial extension of $\Delta$U rings, we introduce the concept of an $n$-$\Delta$U ring. A ring $R$ is called {\it $n$-$\Delta$U} if, for each $u\in U(R)$, $u^n-1\in \Delta(R)$, where $n\geq2$ is a fixed integer. Clearly, all $\Delta$U rings and rings with only two units are $n$-$\Delta$U. Also, every $n$-UJ ring is $n$-$\Delta$U, but the reciprocal implication does {\it not} hold in all generality.

\medskip

Our basic material is organized as follows: In the next section, we examine the behavior of $n$-$\Delta$U rings comparing their crucial properties with these of the $\Delta$U rings (see, for instance, Theorems~\ref{2.7}, \ref{2.13}, \ref{2.16} and \ref{2.22}, respectively). In the third section, we concentrate on the structure of some key extensions of $n$-$\Delta$U ring demonstrating that there is an abundance of their critical properties (see, e.g., Propositions~\ref{3.1}, \ref{3.2}, \ref{3.13}, \ref{3.4}, \ref{3.3}, \ref{3.14}, \ref{3.6}, etc. and Theorem~\ref{newext}).

\section{$n$-$\Delta$U rings}

In this section, we begin by introducing the notion of $n$-$\Delta$U rings and investigate its elementary properties. We now give our main tools.

\begin{definition}\label{2.1}
A ring $R$ is called $n$-$\Delta$U if, for each $u\in U(R)$, $u^n-1\in \Delta(R)$, where $n\geq2$ is a fixed integer.	 \end{definition}

\begin{definition}\label{2.18}
A ring $R$ is called $\pi$-$\Delta$U if, for any $u\in U(R)$, there exists $i\ge 2$ depending on $u$ such that $u^i - 1\in \Delta(R)$.	
\end{definition}

According to the above two definitions, we observe that every $\Delta$U ring is obviously an $n$-$\Delta$U ring and that every $n$-$\Delta$U ring is a $\pi$-$\Delta$U ring. Besides, it is easy to see that if $R$ is a finite $\pi$-$\Delta$U ring, then one can find some number $m\in \mathbb{N}$ such that $R$ is an $m$-$\Delta$U ring.

\medskip

We now arrive at the following construction.

\begin{example}\label{2.2}
Once again, it is clear that $n$-UJ rings are always $n$-$\Delta$U. However, the converse claim is generally invalid. For example, consider the ring \( R = \mathbb F_2\langle x, y \rangle / \langle x^2 \rangle \). Then, one calculates that \( J(R) = \{0\} \), \( \Delta(R) = \mathbb F_2 x + xRx \) and \( U(R) = 1 + \mathbb F_2x + xRx \). Thus, \( R \) is $\Delta$U in view of \cite [Example 2.2]{1} and hence it is $n$-$\Delta$U. But, evidently, \( R \) is {\it not} $n$-UJ.
\end{example}

We continue with the following technicalities.

\begin{proposition}\label{2.4}
Let $R$ be an $n$-$\Delta$U ring, where $n$ is an odd number. Then, $2\in \Delta(R)$.
\end{proposition}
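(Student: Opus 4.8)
The key observation is that $-1$ is always a unit in any ring $R$, so the $n$-$\Delta$U hypothesis applies to it. With $n$ odd, $(-1)^n - 1 = -1 - 1 = -2$, so the plan is simply to feed $u = -1$ into the defining condition of Definition~\ref{2.1}. This yields $(-1)^n - 1 = -2 \in \Delta(R)$ immediately.

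It then remains to pass from $-2 \in \Delta(R)$ to $2 \in \Delta(R)$. This follows because $\Delta(R)$ is closed under multiplication by units (as recalled in the introduction, $\Delta(R)$ is the largest Jacobson-radical subring of $R$ closed under multiplication by all units of $R$); since $-1 \in U(R)$, we get $(-1)\cdot(-2) = 2 \in \Delta(R)$. Alternatively, one may invoke that $\Delta(R)$ is a subring, hence closed under additive inverses, so $-(-2) = 2 \in \Delta(R)$ directly.

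There is essentially no obstacle here; the statement is a one-line consequence of specializing the hypothesis to the canonical unit $-1$ together with the elementary closure properties of $\Delta(R)$ recorded in the introductory discussion. The only thing worth stating explicitly is that $n$ being odd is exactly what makes $(-1)^n = -1$, which is the crux of the computation. I would write the proof as: take $u = -1 \in U(R)$; since $R$ is $n$-$\Delta$U with $n$ odd, $-2 = (-1)^n - 1 \in \Delta(R)$; as $\Delta(R)$ is a subring of $R$, also $2 = -(-2) \in \Delta(R)$, as desired.
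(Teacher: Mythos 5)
Your proof is correct and follows essentially the same route as the paper: specialize the $n$-$\Delta$U condition to the unit $-1$, use that $n$ odd gives $(-1)^n=-1$ so $-2\in\Delta(R)$, and then pass to $2\in\Delta(R)$ via the closure properties of $\Delta(R)$ (the paper cites \cite[Lemma 1(2)]{2} for this last step, which is the same fact you invoke).
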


\begin{proof}
Writing $-1=(-1)^n\in 1+\Delta(R)$ whence $-2\in \Delta(R)$, we apply \cite [Lemma 1(2)]{2} to conclude that $2\in \Delta(R)$, as formulated.
\end{proof}

\begin{remark}
The condition "$n$ is an odd number" in Proposition \ref{2.4} is essential. For instance, \(\mathbb{Z}_6\) is a $2$-$\Delta$U ring, but a simple computation shows that $2\notin \Delta(\mathbb{Z}_6)$.
\end{remark}

\begin{proposition}\label{2.17}
Let $R$ be an $n$-$\Delta$U ring and $k \in \mathbb{N}$ such that $n|k$. Then, $R$ is a $k$-$\Delta$U ring.
\end{proposition}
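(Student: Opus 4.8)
The plan is to exploit the fact that $\Delta(R)$ is closed under multiplication by units, together with the hypothesis $u^n - 1 \in \Delta(R)$ for every $u \in U(R)$. Write $k = nm$ for some $m \in \mathbb{N}$. Fix a unit $u \in U(R)$; I must show $u^k - 1 \in \Delta(R)$. The natural idea is to telescope: write
\[
u^k - 1 = u^{nm} - 1 = (u^n - 1)\bigl(u^{n(m-1)} + u^{n(m-2)} + \cdots + u^n + 1\bigr).
\]
Since $u^n - 1 \in \Delta(R)$ and $\Delta(R)$ is a subring closed under right (and left) multiplication by units of $R$ — and each $u^{nj}$ is a unit — each term $(u^n-1)u^{nj}$ lies in $\Delta(R)$; summing the $m$ such terms and using that $\Delta(R)$ is closed under addition gives $u^k - 1 \in \Delta(R)$, as desired.

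The one point that deserves care is the appeal to closure of $\Delta(R)$ under multiplication by units: by \cite[Theorem 3 and 6]{2} (as recalled in the Introduction), $\Delta(R)$ is closed under multiplication by all units of $R$, and it is a subring, hence closed under finite sums. Thus the factorization above, expanded as a sum of terms each of the form $(u^n-1)\cdot(\text{unit})$, lands in $\Delta(R)$ term by term. An even cleaner alternative, avoiding the factorization entirely, is an induction on $m$: the case $m=1$ is exactly the $n$-$\Delta$U hypothesis, and for the inductive step write $u^{n(m+1)} - 1 = (u^{nm} - 1)u^n + (u^n - 1)$, where $u^{nm}-1 \in \Delta(R)$ by the induction hypothesis, $u^n$ is a unit so $(u^{nm}-1)u^n \in \Delta(R)$, and $u^n - 1 \in \Delta(R)$ by hypothesis; adding these two elements of $\Delta(R)$ completes the step.

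There is essentially no obstacle here: the whole argument rests solely on $\Delta(R)$ being a subring stable under multiplication by units, which is quoted from \cite{2}, and on a one-line algebraic identity. I would present the inductive version for brevity, since it avoids even writing out the geometric-series factor, and remark that the same reasoning shows more generally that the set of $n$ for which a given ring is $n$-$\Delta$U is closed under taking multiples.
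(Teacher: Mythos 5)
Your proof is correct, but it takes a slightly different route from the paper's. You factor $u^{k}-1=(u^{n}-1)\bigl(u^{n(m-1)}+\cdots+u^{n}+1\bigr)$ (or run the equivalent induction) and then invoke the stability of $\Delta(R)$ under multiplication by units to absorb each term $(u^{n}-1)u^{nj}$. The paper instead writes $u^{n}=1+r$ with $r\in\Delta(R)$ and observes that $u^{k}=(1+r)^{t}=1+r'$ where $r'=(1+r)^{t}-1=\sum_{i\ge 1}\binom{t}{i}r^{i}$ is a sum of products of elements of $\Delta(R)$, hence lies in $\Delta(R)$ purely because $\Delta(R)$ is a subring. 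The two computations are algebraically the same identity in disguise, but the paper's justification is marginally more economical: it needs only closure of $\Delta(R)$ under addition and multiplication, whereas yours additionally calls on the (true, and quoted) unit-stability of $\Delta(R)$. Both are complete; your closing remark that the set of admissible exponents is closed under taking multiples is exactly the content of the proposition and follows either way.
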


\begin{proof}
Since $R$ is an $n$-$\Delta$U ring, for any $u \in U(R)$ we may write that $u^n = 1 + r$, where $r \in \Delta(R)$. Since $n|k$, there exists an integer $t$ such that $k = tn$. Thus, $$u^k = (u^n)^t = (1 + r)^t = 1 + r',$$ where $r' = (1+r)^t-1$, which is obviously in $\Delta(R)$ because it is a subring of $R$. Therefore, $u^k = 1 + r'$, where $r' \in \Delta(R)$. Hence, $R$ is a $k$-$\Delta$U ring, as stated.
\end{proof}

\begin{proposition}\label{2.19}
A division ring $D$ is $n$-$\Delta$U if, and only if, $u^n=1$ for every $u\in U(D)$.
\end{proposition}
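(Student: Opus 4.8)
The plan is to reduce the statement to the computation of $\Delta(D)$ for a division ring $D$. The key observation is that in $D$ every nonzero element is invertible, so the set $\Delta(D)$ collapses to $\{0\}$. Once this is known, the equivalence is immediate from the definition: $D$ is $n$-$\Delta$U means $u^n-1\in\Delta(D)$ for every $u\in U(D)$, and since $\Delta(D)=\{0\}$ this says precisely that $u^n=1$ for every $u\in U(D)$; conversely, if $u^n=1$ for all units $u$, then $u^n-1=0\in\Delta(D)$ trivially, so $D$ is $n$-$\Delta$U.

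The one substantive step is therefore to verify that $\Delta(D)=\{0\}$. I would use the elementwise characterization recalled in the Introduction, namely that $a\in\Delta(R)$ if and only if $1-ua\in U(R)$ for every $u\in U(R)$. Suppose, for contradiction, that some $a\in\Delta(D)$ with $a\neq 0$. Since $D$ is a division ring, $a$ is invertible, hence $u:=a^{-1}\in U(D)$; but then $1-ua=1-a^{-1}a=1-1=0$, which is not invertible in $D$, contradicting $a\in\Delta(D)$. Thus no nonzero element lies in $\Delta(D)$, i.e. $\Delta(D)=\{0\}$ (equivalently $\Delta(D)=J(D)=0$).

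I do not expect a genuine obstacle here; the only point worth flagging is that one \emph{cannot} simply invoke $J(D)=0$ and $\Delta(D)\subseteq J(D)$, because in general $J(R)\subseteq\Delta(R)$ and the reverse inclusion may fail — so the explicit unit-theoretic computation of $\Delta(D)$ given above is the right route. After that, the two implications of the biconditional are one line each.
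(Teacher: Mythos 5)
Your proposal is correct and follows exactly the route the paper takes: the paper's proof is the one-line observation that $\Delta(D)=\{0\}$ for a division ring $D$, after which both implications are immediate. You merely supply the (correct) justification of $\Delta(D)=\{0\}$ via the characterization $a\in\Delta(R)\iff 1-ua\in U(R)$ for all $u\in U(R)$, which the paper leaves implicit.
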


\begin{proof}
It is straightforward by noticing that for any division ring $D$ we have $\Delta(D)=\{0\}$.	
\end{proof}

\begin{lemma}\label{2.20}
Suppose $\mathbb F$ is a field. Then, $\mathbb F$ is $n$-$\Delta$U if, and only if, $\mathbb F$ is finite and $(|\mathbb F|-1) | n$.	
\end{lemma}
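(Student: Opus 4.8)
The plan is to reduce the statement to Proposition~\ref{2.19}, since a field is in particular a division ring and $\Delta(\mathbb F)=\{0\}$, so $\mathbb F$ is $n$-$\Delta$U exactly when $u^n=1$ for every $u\in U(\mathbb F)=\mathbb F\setminus\{0\}$. First I would handle the forward direction: assuming every nonzero element of $\mathbb F$ satisfies $u^n=1$, I note that the polynomial $x^n-1$ then has every element of $\mathbb F^\times$ as a root, so $|\mathbb F^\times|\le n$; in particular $\mathbb F^\times$ is finite, hence $\mathbb F$ is finite (an infinite field has infinitely many nonzero elements). Once $\mathbb F$ is known to be finite, $\mathbb F^\times$ is a cyclic group of order $|\mathbb F|-1$, and the condition that $u^n=1$ for all $u$ is equivalent to the exponent of this cyclic group dividing $n$, i.e. $(|\mathbb F|-1)\mid n$.

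For the converse, suppose $\mathbb F$ is finite with $(|\mathbb F|-1)\mid n$. Then $\mathbb F^\times$ is a group of order $|\mathbb F|-1$, so by Lagrange's theorem $u^{|\mathbb F|-1}=1$ for every $u\in U(\mathbb F)$, and writing $n=(|\mathbb F|-1)t$ gives $u^n=(u^{|\mathbb F|-1})^t=1$. By Proposition~\ref{2.19} this is precisely the condition for $\mathbb F$ (viewed as a division ring) to be $n$-$\Delta$U, and we are done.

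There is essentially no hard part here; the only thing to be slightly careful about is the direction of the divisibility and the standard fact that $\mathbb F^\times$ is cyclic for $\mathbb F$ finite (so that "$u^n=1$ for all units" forces the full order $|\mathbb F|-1$, not merely some divisor of it, to divide $n$). I would state that cyclicity explicitly and cite it as classical. Everything else is a one-line invocation of Lagrange's theorem and Proposition~\ref{2.19}.
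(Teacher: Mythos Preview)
Your proposal is correct and follows essentially the same route as the paper: both arguments first note that $\Delta(\mathbb F)=\{0\}$ so the condition becomes $u^n=1$ for all $u\in\mathbb F^\times$, then use that $x^n-1$ has at most $n$ roots to force finiteness, and finally invoke cyclicity of $\mathbb F^\times$ to obtain $(|\mathbb F|-1)\mid n$. The paper dismisses the converse as ``elementary'' while you spell it out via Lagrange, but there is no substantive difference in strategy.
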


\begin{proof}
Let $f(x)=1-x^n \in \mathbb F[x]$. Since $\mathbb F$ is a field, the polynomial $f(x)$ has at most $n$ roots in $\mathbb F^{\ast}$. So, if we suppose $A$ to be the set of all roots of $f$ in $\mathbb F^{\ast}$, we will have $\mathbb F^{\ast}=A$. Consequently, $|\mathbb F^{\ast}|=|A|<n$.
	
On the other hand, as $\mathbb F^{\ast}$ is a cyclic group, there exists $a\in \mathbb F^{\ast}$ such that $\mathbb F^{\ast} =\langle a\rangle$. Since $a^n=1$, we get $o(a)|n$, and hence $n=o(a)q=|\mathbb F^{\ast}|q$. Therefore, $|\mathbb F^{\ast}||n$ and, finally, $(|\mathbb F|-1) | n$, as pursued.\\
The reverse implication is elementary.
\end{proof}

\begin{lemma}\label{2.21}
Let $D$ be a division ring and $n\ge 2$. If $D$ is $n$-$\Delta$U, then $D$ is a finite field and $(|D|-1) | n$.
\end{lemma}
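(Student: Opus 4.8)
The plan is to reduce the problem to the commutative case already handled in Lemma \ref{2.20}, which requires showing that an $n$-$\Delta$U division ring $D$ is necessarily commutative and finite. The natural tool is Jacobson's commutativity theorem (if every element $a$ of a ring satisfies $a^{m(a)}=a$ for some integer $m(a)>1$, the ring is commutative), so the goal is to manufacture, from the $n$-$\Delta$U hypothesis, such an identity for every element of $D$. Since $\Delta(D)=\{0\}$ for a division ring (as noted in the proof of Proposition \ref{2.19}), the hypothesis says precisely that $u^n=1$ for every $u\in U(D)=D\setminus\{0\}$.

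First I would observe that $u^n=1$ for all nonzero $u$ forces every nonzero element to satisfy $u^{n+1}=u$, and trivially $0^{n+1}=0$, so \emph{every} element $a\in D$ satisfies $a^{n+1}=a$ with $n+1\ge 3$. By Jacobson's theorem, $D$ is commutative, hence a field. Next I would argue finiteness: the polynomial $x^n-1\in D[x]$ has at most $n$ roots in the field $D$, but by hypothesis every element of $D^{\ast}$ is a root, so $|D^{\ast}|\le n$, whence $D$ is finite. Finally, with $D$ now known to be a finite field, Lemma \ref{2.20} applies verbatim and yields $(|D|-1)\mid n$, completing the argument.

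The only real obstacle is justifying the appeal to Jacobson's theorem cleanly — one must state it in the form needed (a ring, not necessarily with identity or commutative, in which each element is a root of $x^{m}-x$ for some $m=m(a)>1$ is commutative) and note that here a \emph{single} exponent $n+1$ works uniformly, which is more than enough. Everything else is routine: the passage from $u^n=1$ to $a^{n+1}=a$, the root-counting bound, and the invocation of Lemma \ref{2.20}. Alternatively, one could bypass Jacobson entirely by noting that a division ring whose multiplicative group has finite exponent $n$ has all finitely generated subrings finite (each generated by elements of finite multiplicative order), hence is an algebraic extension of its prime field with torsion multiplicative group, and then invoke Wedderburn's little theorem after showing finiteness; but the Jacobson-theorem route is shorter and I would present that one.
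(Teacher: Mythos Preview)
Your proposal is correct and follows essentially the same route as the paper: observe $\Delta(D)=\{0\}$, deduce $a^{n+1}=a$ for every $a\in D$, invoke Jacobson's commutativity theorem to conclude $D$ is a field, and then apply Lemma~\ref{2.20}. Your separate root-counting argument for finiteness is harmless but redundant, since Lemma~\ref{2.20} already delivers both finiteness and the divisibility condition.
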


\begin{proof}
Certainly, $\Delta(D)=\{0\}$. So, for any $a \in D$, we have $a^n=1$, whence $a=a^{n+1}$. Furthermore, appealing to the famous Jacobson's Theorem \cite [12.10]{12}, we detect that $D$ must be commutative, and thus a field, as expected.
	
The second part follows at once from Lemma \ref{2.20}.
\end{proof}

\begin{corollary}
If $D$ is a division ring which is $\pi$-$\Delta$U, then $D$ is a field.	
\end{corollary}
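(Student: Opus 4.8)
The plan is to reduce the statement to Jacobson's commutativity theorem, in the same spirit as the proof of Lemma~\ref{2.21}, the only difference being that the exponent is now allowed to depend on the element. First I would record that, since $D$ is a division ring, $\Delta(D)=\{0\}$, so the $\pi$-$\Delta$U hypothesis says exactly that for every $u\in U(D)=D\setminus\{0\}$ there is an integer $i=i(u)\ge 2$ with $u^{i}=1$. Multiplying by $u$ then gives $u^{\,i(u)+1}=u$ with $i(u)+1\ge 3>1$, and trivially $0^{2}=0$; hence every element $a\in D$ satisfies $a^{\,m(a)}=a$ for some integer $m(a)>1$ depending on $a$.

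Now I would invoke Jacobson's Theorem \cite[12.10]{12}, to the effect that a ring in which each element $a$ satisfies $a^{\,m(a)}=a$ for some $m(a)>1$ is necessarily commutative. Applying it to $D$ yields that $D$ is commutative, and a commutative division ring is by definition a field, which is the desired conclusion.

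I do not expect any genuine obstacle here: the entire content is that the pointwise periodicity extracted from the $\pi$-$\Delta$U condition is precisely the hypothesis under which Jacobson's theorem applies, so the corollary is really just the division-ring case of Lemma~\ref{2.21} with the fixed exponent $n$ replaced by an element-dependent exponent $i(u)$. One caveat worth flagging in the write-up is that, unlike in the $n$-$\Delta$U situation of Lemma~\ref{2.21}, one should \emph{not} claim that $D$ is finite: for example $\overline{\mathbb F_p}$ is an infinite $\pi$-$\Delta$U field, since each of its nonzero elements is a root of unity while their orders are unbounded.
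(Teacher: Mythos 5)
Your proof is correct and follows the same route the paper intends: since $\Delta(D)=\{0\}$, the $\pi$-$\Delta$U condition gives $u^{i(u)}=1$ for each nonzero $u$, hence $a^{m(a)}=a$ for all $a\in D$ with $m(a)>1$, and Jacobson's theorem \cite[12.10]{12} (in its element-dependent-exponent form, exactly as used in Lemma~\ref{2.21}) yields commutativity. Your caveat that finiteness cannot be concluded here, with $\overline{\mathbb F_p}$ as witness, is accurate and a worthwhile remark.
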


\begin{example}
Consider the ring $\mathbb{Z}$. Knowing that $U(\mathbb{Z})= \{1, -1\}$, it is not too hard to see that $\Delta(\mathbb{Z})=\{0\}$. Hence, $\mathbb{Z}$ is an $n$-$\Delta$U. Nevertheless, for an arbitrary prime number $p$, the ring $\mathbb{Z}_{p}$ is {\it not} $n$-$\Delta$U for every $n$ unless $p-1$ divides $n$ by Lemma \ref{2.20}.
\end{example}

\begin{proposition}\label{2.3}
A direct product \(\prod_{i \in I} R_i\) of rings $R_i$ is $n$-$\Delta$U if, and only if, each direct component \(R_i\) is $n$-$\Delta$U.
\end{proposition}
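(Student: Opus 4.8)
The plan is to prove the two implications separately, relying on the structural description of $\Delta$ for direct products, which states that $\Delta\!\left(\prod_{i\in I}R_i\right)=\prod_{i\in I}\Delta(R_i)$ (this is the analogue for $\Delta$ of the corresponding fact for the Jacobson radical, and follows directly from the characterization recalled in the introduction: $a\in\Delta(R)$ iff $1-ua\in U(R)$ for all $u\in U(R)$, together with the coordinatewise description of units, namely $u=(u_i)_{i\in I}\in U\!\left(\prod_i R_i\right)$ iff $u_i\in U(R_i)$ for every $i$). I will state this identity at the outset, with a one-line justification, since everything else is a formal consequence.

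For the forward direction, suppose $R=\prod_{i\in I}R_i$ is $n$-$\Delta$U and fix an index $j\in I$ and a unit $v\in U(R_j)$. I would lift $v$ to a unit $u=(u_i)_{i\in I}\in U(R)$ by setting $u_j=v$ and $u_i=1$ for $i\neq j$; this is a unit since each coordinate is. By hypothesis $u^n-1\in\Delta(R)=\prod_i\Delta(R_i)$, and reading off the $j$-th coordinate gives $v^n-1\in\Delta(R_j)$. Since $j$ and $v$ were arbitrary, each $R_j$ is $n$-$\Delta$U.

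For the reverse direction, suppose each $R_i$ is $n$-$\Delta$U and take any $u=(u_i)_{i\in I}\in U(R)$. Then each $u_i\in U(R_i)$, so $u_i^n-1\in\Delta(R_i)$ for every $i$. Hence $u^n-1=(u_i^n-1)_{i\in I}\in\prod_i\Delta(R_i)=\Delta(R)$, so $R$ is $n$-$\Delta$U.

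I do not anticipate a genuine obstacle here; the only point requiring a little care is the identity $\Delta\!\left(\prod_i R_i\right)=\prod_i\Delta(R_i)$, and in particular the inclusion $\supseteq$: given $a=(a_i)$ with each $a_i\in\Delta(R_i)$ and an arbitrary unit $u=(u_i)$, one must check $1-ua$ is a unit, which holds because its $i$-th coordinate $1-u_ia_i$ is a unit in $R_i$ by definition of $\Delta(R_i)$; the inclusion $\subseteq$ is the specialization argument used in the forward direction above. If one prefers to avoid invoking this identity as a black box, the proof can be written entirely in terms of the "$1-ua\in U$" criterion, but citing \cite{2} for the product formula (or proving it in two lines) keeps the argument cleanest.
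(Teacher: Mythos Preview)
Your proof is correct and follows essentially the same approach as the paper: both rely on the identities $U\!\left(\prod_i R_i\right)=\prod_i U(R_i)$ and $\Delta\!\left(\prod_i R_i\right)=\prod_i \Delta(R_i)$, after which the result is immediate by a coordinatewise argument. The paper simply states these two equalities and says the result follows at once, whereas you spell out the lifting and projection steps explicitly; no substantive difference.
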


\begin{proof}
As the equalities \(\Delta(\prod_{i \in I} R_i) = \prod_{i \in I} \Delta(R_i)\) and \(U(\prod_{i \in I} R_i) = \prod_{i \in I} U(R_i)\) are fulfilled, the result follows at once.
\end{proof}

\begin{proposition}\label{2.5}
Let \(R\) be an $n$-$\Delta$U ring. If \(T\) is an epimorphic image of \(R\) such that all units of \(T\) lift to units of \(R\), then \(T\) is $n$-$\Delta$U.
\end{proposition}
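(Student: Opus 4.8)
My plan is to push the defining property of $\Delta$ forward through the given surjection. Write $\varphi\colon R\to T$ for a surjective (unital) ring homomorphism realizing $T$ as an epimorphic image of $R$. The crucial step, and the only place where the hypothesis "all units of $T$ lift to units of $R$" is really used, is the inclusion $\varphi(\Delta(R))\subseteq\Delta(T)$. To prove it, fix $a\in\Delta(R)$ and an arbitrary $w\in U(T)$, and choose $u\in U(R)$ with $\varphi(u)=w$. Since $\Delta(R)$ is a subring of $J(R)$ closed under multiplication by units (as recalled in the Introduction), we get $ua\in\Delta(R)\subseteq J(R)$, hence $1-ua\in U(R)$; applying $\varphi$ gives $1-w\varphi(a)=\varphi(1-ua)\in U(T)$. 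Since $w$ ranges over all of $U(T)$, the characterization of $\Delta(T)$ stated in the Introduction (an element $b$ lies in $\Delta(T)$ once $1-wb$ is invertible for every $w\in U(T)$) yields $\varphi(a)\in\Delta(T)$, and therefore $\varphi(\Delta(R))\subseteq\Delta(T)$.

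With that inclusion in hand, the rest is immediate. Given an arbitrary $v\in U(T)$, lift it to some $u\in U(R)$ with $\varphi(u)=v$. Because $R$ is $n$-$\Delta$U we have $u^n-1\in\Delta(R)$, so $v^n-1=\varphi(u^n-1)\in\varphi(\Delta(R))\subseteq\Delta(T)$. As $v$ was an arbitrary unit of $T$, this is exactly the statement that $T$ is $n$-$\Delta$U.

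The one genuinely delicate point is the inclusion $\varphi(\Delta(R))\subseteq\Delta(T)$: to test whether $\varphi(a)\in\Delta(T)$ one must run over \emph{every} unit $w$ of $T$, and one needs each such $w$ to be of the form $\varphi(u)$ with $u\in U(R)$ in order to invoke $a\in\Delta(R)$ — which is precisely the content of the lifting hypothesis, and without it the inclusion can genuinely fail. Everything else reduces to the elementary facts that $\varphi$ preserves $1$ and sends units to units. (Equivalently, one could run the first step through the description $\Delta(R)=\{a\in R:\ a+U(R)\subseteq U(R)\}$ from \cite{2}: then $u+a\in U(R)$ forces $w+\varphi(a)=\varphi(u+a)\in U(T)$, with the identical use of the lifting hypothesis.)
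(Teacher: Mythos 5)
Your argument is correct and follows essentially the same route as the paper: lift the unit $v\in U(T)$ to $u\in U(R)$, apply the $n$-$\Delta$U hypothesis to get $u^n=1+r$ with $r\in\Delta(R)$, and push forward; the paper simply asserts $f(r)\in\Delta(T)$ where you (rightly) isolate and prove the inclusion $\varphi(\Delta(R))\subseteq\Delta(T)$ as the point where the unit-lifting hypothesis is used. One slip: you justify $1-ua\in U(R)$ via ``$\Delta(R)$ is a subring of $J(R)$,'' but the containment recalled in the Introduction goes the other way, $J(R)\subseteq\Delta(R)$ (and $\Delta(R)\not\subseteq J(R)$ in general, e.g.\ in Example~\ref{2.2} where $J(R)=\{0\}$ but $\Delta(R)\neq\{0\}$); this does not damage the proof, since $1-ua\in U(R)$ for all $u\in U(R)$ is precisely the defining characterization of $a\in\Delta(R)$, and your parenthetical alternative via $\Delta(R)=\{a: a+U(R)\subseteq U(R)\}$ is already fully correct.
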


\begin{proof}
Suppose that \(f: R \rightarrow T\) is a ring epimorphism. Let \(v \in U(T)\). Then, there exists \(u \in U(R)\) such that \(v = f(u)\) and \(u^n = 1 + r \in 1 + \Delta(R)\). Thus, we have \[
v^n = (f(u))^n = f(u^n) = f(1 + r) = f(1) + f(r) = 1 + f(r) \in 1 + \Delta(T),\] as asked for.
\end{proof}

\begin{proposition}\label{2.6}
Let \(R\) be an $n$-$\Delta$U. For any unital subring \(S\) of \(R\), if \(S \cap \Delta(R) \subseteq \Delta(S)\), then \(S\) is an $n$-$\Delta$U ring. In particular, the center of \(R\) is an $n$-$\Delta$U ring.
\end{proposition}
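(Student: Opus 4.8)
The statement is essentially formal: the whole point is that a unit of a unital subring is automatically a unit of the overring, so the $n$-$\Delta$U condition can be transported downward once we know where $u^n-1$ lands. I would first dispose of the general assertion and then verify that the center satisfies the hypothesis $C(R)\cap\Delta(R)\subseteq\Delta(C(R))$.

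\emph{General part.} Take an arbitrary $u\in U(S)$. Since $S$ is a \emph{unital} subring of $R$, the inverse of $u$ computed inside $S$ is also a two-sided inverse of $u$ in $R$, so $u\in U(R)$. Because $R$ is $n$-$\Delta$U, we get $u^n-1\in\Delta(R)$; on the other hand $u^n-1\in S$, as $S$ is closed under multiplication and contains $1$. Hence $u^n-1\in S\cap\Delta(R)\subseteq\Delta(S)$ by assumption, and since $u$ was arbitrary, $S$ is $n$-$\Delta$U.

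\emph{The center.} It remains to check the hypothesis for $S=C(R)$, i.e. that $C(R)\cap\Delta(R)\subseteq\Delta(C(R))$. Here I would use the element-wise description of $\Delta$ recalled in the introduction: for any ring, an element $a$ lies in its $\Delta$ exactly when $1-wa$ is invertible for every unit $w$ (both implications being consequences of $\Delta=J(T)$ with $T$ the subring generated by the units, since quasi-regularity of $wa$ in $T$ gives an inverse of $1-wa$ already inside $T\subseteq R$). So fix $a\in C(R)\cap\Delta(R)$ and let $w\in U(C(R))$. Then $w\in U(R)$, because the inverse of a central unit is again central, and since $a\in\Delta(R)$ we obtain $1-wa\in U(R)$. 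But $1-wa$ is central, being built from the central elements $1,w,a$, and the inverse of a central unit is central; therefore $1-wa\in U(C(R))$. As $w\in U(C(R))$ was arbitrary, $a\in\Delta(C(R))$, which is what was needed, and the ``in particular'' clause follows from the general part.

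\emph{Where the care is needed.} The only non-bookkeeping point is the last paragraph: one must invoke an honest ``if and only if'' characterization of $\Delta$ in terms of the units $w$, and one must note the two closure facts — that $1-wa$ stays central and that inversion preserves centrality — so that membership in $U(C(R))$, not merely in $U(R)$, is actually achieved. This is the step I would write out explicitly; everything else is immediate.
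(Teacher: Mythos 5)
Your proof of the general assertion is exactly the paper's: since $S$ is unital, $U(S)\subseteq U(R)$, so for $u\in U(S)$ one gets $u^n-1\in S\cap\Delta(R)\subseteq\Delta(S)$. The only divergence is in the ``in particular'' clause: the paper disposes of it by citing \cite[Corollary 8]{2} for the containment $C(R)\cap\Delta(R)\subseteq\Delta(C(R))$, whereas you verify that containment directly. Your verification is correct, and the two closure facts you isolate --- that $1-wa$ is central when $w,a$ are, and that the inverse of a central unit is again central --- are precisely what is needed to upgrade $1-wa\in U(R)$ to $1-wa\in U(C(R))$; the ``if and only if'' form of the unit-perturbation description of $\Delta$ that you invoke is legitimate, since Leroy--Matczuk's definition is $\Delta(R)=\{r\in R: r+U(R)\subseteq U(R)\}$ and the $1-wa$ form is obtained from it by rescaling by a unit (your detour through $\Delta=J(T)$ is not needed for this). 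In short, your argument buys self-containedness at the cost of a few extra lines, but is otherwise the same proof.
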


\begin{proof}
Let \(v \in U(S)\) \(\subseteq U(R)\). Since \(R\) is $n$-$\Delta$U, we have \( v^n-1 \in \Delta(R) \cap S \subseteq \Delta(S)\). So, \(S\) is necessarily an $n$-$\Delta$U ring. The rest of the statement follows directly from \cite[Corollary 8]{2}.
\end{proof}

Our first major assertion is the following necessary and sufficient condition.

\begin{theorem}\label{2.7}
Let \(I \subseteq J(R)\) be an ideal of a ring \(R\). Then \(R\) is $n$-$\Delta$U if, and only if, so is \(R/I\).
\end{theorem}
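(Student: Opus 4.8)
The plan is to pass through the quotient map $\pi : R \to R/I$ and track units and $\Delta$ in both directions, using the hypothesis $I \subseteq J(R)$ to guarantee that units lift. For the forward direction, suppose $R$ is $n$-$\Delta$U. Since $I \subseteq J(R)$, every unit of $R/I$ lifts to a unit of $R$ (a standard fact: if $\pi(u)$ is invertible then $u$ is invertible modulo $J(R)$, hence invertible). Then Proposition~\ref{2.5} applies directly with $T = R/I$, and we conclude $R/I$ is $n$-$\Delta$U. So this direction is essentially immediate from results already established.

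The substantive direction is the converse: assume $R/I$ is $n$-$\Delta$U and deduce the same for $R$. Take $u \in U(R)$ and set $\bar u = \pi(u) \in U(R/I)$. By hypothesis $\bar u^{\,n} - \bar 1 \in \Delta(R/I)$, i.e. $\pi(u^n - 1) \in \Delta(R/I)$. I would then show that this pulls back to $u^n - 1 \in \Delta(R)$. The key step is the claim that $\pi^{-1}(\Delta(R/I)) \subseteq \Delta(R)$ whenever $I \subseteq J(R)$; equivalently, that $a \in R$ with $\pi(a) \in \Delta(R/I)$ satisfies $1 - va \in U(R)$ for every $v \in U(R)$. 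To see this, fix $v \in U(R)$; then $\pi(v) \in U(R/I)$, so $\pi(1 - va) = 1 - \pi(v)\pi(a) \in U(R/I)$ because $\pi(a) \in \Delta(R/I)$ and $\Delta(R/I)$ is closed under multiplication by units. Hence $1 - va$ is invertible modulo $I$, and since $I \subseteq J(R)$, it follows that $1 - va \in U(R)$. This is exactly the defining condition for $u^n - 1 \in \Delta(R)$ (using the characterization of $\Delta$ recalled in the introduction: $a \in \Delta(R)$ iff $1 - ua \in U(R)$ for all $u \in U(R)$), so $R$ is $n$-$\Delta$U.

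The one point requiring a little care — and what I expect to be the main obstacle, though it is minor — is justifying $\pi^{-1}(\Delta(R/I)) \subseteq \Delta(R)$ cleanly; in fact one can extract it from the Leroy--Matczuk description, but the direct argument above via "invertible mod $J(R)$ implies invertible" is self-contained and avoids invoking extra machinery. Everything else is bookkeeping: one should note at the outset that $R$ and $R/I$ have "the same" units in the sense that $\pi$ restricts to a surjection $U(R) \to U(R/I)$, which is what makes both directions symmetric. I would present the converse in full and dispatch the forward direction in one sentence by citing Proposition~\ref{2.5}.
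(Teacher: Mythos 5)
Your proposal is correct and follows essentially the same route as the paper: both directions come down to unit--lifting modulo $J(R)$ together with the compatibility of $\Delta$ with quotients by ideals contained in $J(R)$. The only difference is presentational --- the paper invokes Leroy--Matczuk's identity $\Delta(R/I)=\Delta(R)/I$ (their Proposition 6), whereas you verify the needed inclusion $\pi^{-1}(\Delta(R/I))\subseteq \Delta(R)$ directly from the characterization that $a\in\Delta(R)$ precisely when $1-va\in U(R)$ for every $v\in U(R)$, which makes your converse self-contained.
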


\begin{proof}
Let \(R\) be $n$-$\Delta$U and \(u + I \in U(R/I)\). Then, \(u \in U(R)\) and thus \(u^n = 1 + r\), where \(r \in \Delta(R)\). Now, \((u + I)^n = u^n + I = (1+I)+(r+I)\), where \(r + I \in \Delta(R)/I = \Delta(R/I)\) in virtue of \cite [Proposition 6]{2}.

Conversely, let \(R/I\) is $n$-$\Delta$U and \(u \in U(R)\). Then, \(u + I \in U(R/I)\) whence \((u + I)^n = (1 + I) + (r + I)\), where \(r + I \in \Delta(R/I)\). Thus, \(u^n + I = (1 + r) + I\) and so \(u^n - (1 + r) \in I \subseteq J(R) \subseteq \Delta(R)\). Therefore, \(u^n = 1 + r^\prime\), where \(r^\prime \in \Delta(R)\). Hence, \(R\) is $n$-$\Delta$U, as required.
\end{proof}

As an automatic consequence, we extract:

\begin{corollary}\label{2.8}
A ring \(R\) is $n$-$\Delta$U if, and only if, \(R/J(R)\) is $n$-$\Delta$U.
\end{corollary}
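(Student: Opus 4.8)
The plan is to derive this directly from Theorem~\ref{2.7}, which handles arbitrary ideals $I$ contained in $J(R)$. The only thing to check is that the hypotheses of that theorem are met by the specific choice $I = J(R)$: namely, $J(R)$ is indeed a two-sided ideal of $R$, and the containment $J(R) \subseteq J(R)$ holds trivially. Hence Theorem~\ref{2.7} applies verbatim with $I := J(R)$, and it yields that $R$ is $n$-$\Delta$U if, and only if, $R/J(R)$ is $n$-$\Delta$U.

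There is essentially no obstacle here; the work has already been done in Theorem~\ref{2.7}, whose forward direction uses $\Delta(R)/I = \Delta(R/I)$ (via \cite[Proposition 6]{2}) and whose reverse direction uses the chain $I \subseteq J(R) \subseteq \Delta(R)$ to absorb the ``error term'' back into $\Delta(R)$. Both ingredients specialize painlessly to $I = J(R)$. I would simply record the one-line deduction: ``Apply Theorem~\ref{2.7} with $I = J(R)$.'' If one wished to make the corollary self-contained, one could alternatively re-run the two implications of Theorem~\ref{2.7} in this special case, but that would merely reproduce the same argument, so citing the theorem is cleaner.

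For completeness, I would also note in passing that this gives a convenient reduction: questions about the $n$-$\Delta$U property may always be pushed to semiprimitive rings $R/J(R)$, which is the typical use one makes of such a corollary in the sequel.

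\begin{proof}
Since $J(R)$ is an ideal of $R$ with $J(R) \subseteq J(R)$, the claim is precisely Theorem~\ref{2.7} applied to the ideal $I = J(R)$.
\end{proof}
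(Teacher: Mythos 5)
Your proof is correct and matches the paper exactly: the corollary is presented there as an automatic consequence of Theorem~\ref{2.7}, obtained by taking $I = J(R)$. Nothing further is needed.
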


We next proceed by proving the following structural affirmations.

\begin{proposition}\label{2.9}
Let \(R\) be an $n$-$\Delta$U (resp., a $\pi$-$\Delta$U) ring and let \(e\) be an idempotent of \(R\). Then, \(eRe\) is an $n$-$\Delta$U (resp., a $\pi$-$\Delta$U) ring.
\end{proposition}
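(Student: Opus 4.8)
The plan is to shuttle units between the corner ring $eRe$ (which carries the identity $e$) and the whole ring $R$ by means of the standard device $v\mapsto v+(1-e)$, and then to pass the subring $\Delta$ to the corner.

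First I would record the elementary facts. If $v\in U(eRe)$, say $vw=wv=e$ with $w\in eRe$, then $u:=v+(1-e)$ is a unit of $R$ with inverse $w+(1-e)$, since $ev=ve=v$ forces $v(1-e)=(1-e)v=0$; the same orthogonality kills every mixed term, so $u^{k}=v^{k}+(1-e)$ for all $k\ge 1$, and hence $u^{k}-1=v^{k}-e\in eRe$. Now assume $R$ is $n$-$\Delta$U and fix $v\in U(eRe)$. Applying the hypothesis to $u=v+(1-e)\in U(R)$ gives $v^{n}-e=u^{n}-1\in\Delta(R)$, so $v^{n}-e$ lies in $\Delta(R)\cap eRe$.

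It then remains to verify the corner inclusion $\Delta(R)\cap eRe\subseteq\Delta(eRe)$. Let $a\in\Delta(R)\cap eRe$ (so $a=ea=ae=eae$) and let $v'\in U(eRe)$ be arbitrary; put $u'=v'+(1-e)\in U(R)$. Since $(1-e)a=0$, we get $u'a=v'a$, so by the characterization of $\Delta(R)$ the element $1-v'a=(e-v'a)+(1-e)$ is a unit of $R$; write $c$ for its inverse. Because $v'a\in eRe$, one computes $(1-v'a)(1-e)=1-e=(1-e)(1-v'a)$, which forces $(1-e)c=1-e=c(1-e)$ and hence $c=ece+(1-e)$; a short computation inside the Peirce decomposition $R=eRe\oplus eR(1-e)\oplus(1-e)Re\oplus(1-e)R(1-e)$ then gives $(e-v'a)(ece)=e=(ece)(e-v'a)$. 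Thus $e-v'a\in U(eRe)$ for every $v'\in U(eRe)$, i.e. $a\in\Delta(eRe)$. Applying this with $a=v^{n}-e$ shows $v^{n}-e\in\Delta(eRe)$, so $eRe$ is $n$-$\Delta$U.

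Finally, the $\pi$-$\Delta$U assertion runs word for word the same way: for $v\in U(eRe)$ choose $i\ge 2$ with $(v+(1-e))^{i}-1\in\Delta(R)$; this element equals $v^{i}-e\in\Delta(R)\cap eRe$, so the corner inclusion yields $v^{i}-e\in\Delta(eRe)$. The only step with genuine content is the corner inclusion $\Delta(R)\cap eRe\subseteq\Delta(eRe)$; everything else is bookkeeping with orthogonal idempotents, and if an inclusion of this type is already recorded in \cite{2}, the argument reduces to its first two paragraphs.
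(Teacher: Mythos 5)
Your proof is correct and follows essentially the same route as the paper: both pass a unit $v\in U(eRe)$ to $v+(1-e)\in U(R)$ to land $v^n-e$ in $\Delta(R)\cap eRe$, and then verify the corner inclusion $\Delta(R)\cap eRe\subseteq\Delta(eRe)$ by checking that a unit of $R$ of the form $a+(1-e)$ with $a\in eRe$ restricts to a unit of $eRe$. The only cosmetic difference is that you use the characterization ``$1-ua$ invertible for all units $u$'' where the paper uses the equivalent additive one, $\Delta(R)+U(R)\subseteq U(R)$.
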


\begin{proof}
Let $u \in U(eRe)$. Thus, $u + (1-e) \in U(R)$. By hypothesis, $$(u + (1-e))^n = u^n + (1 - e) = 1 + r \in 1 + \Delta(R).$$ So, we have $u^n - e \in \Delta(R)$. Now, we show that $u^n - e \in \Delta(eRe)$. Let $v$ be an arbitrary unit of $eRe$. Apparently, $v + 1 - e \in U(R)$. Note that $u^n - e \in \Delta(R)$ gives us that $u^n - e + v + 1 - e \in U(R)$ utilizing the definition of $\Delta(R)$. Taking $u^n - e + v + 1 - e = t \in U(R)$, one checks that $$et=te=ete=u^n-e+v,$$ and so $ete \in U(eRe)$. It now follows that $u^n-e+U(eRe) \subseteq U(Re)$. Then, we deduce $u^n - e \in \Delta(eRe)$ implying $u^n\in e+\Delta(eRe)$ which yields that the corner ring $eRe$ is an $n$-$\Delta$U ring, as wanted.\\
The case of $\pi$-$\Delta$U rings is quite similar, so we omit the arguments.
\end{proof}

\begin{proposition}\label{2.10}
For any ring \( R \neq \{0\} \) and any integer \( n \geq 2 \), the ring \( M_n(R) \) is not a $(2k-1)$-$\Delta$U ring whenever $k\geq 1$.
\end{proposition}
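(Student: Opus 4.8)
The plan is to exhibit, for every $n\ge 2$ and every ring $R\neq\{0\}$, a single unit of $M_n(R)$ whose $(2k-1)$-st power does not lie in $1+\Delta(M_n(R))$. The natural candidate is a permutation matrix: let $\tau=e_{12}+e_{21}+\sum_{i\ge 3}e_{ii}\in M_n(R)$ be the matrix of the transposition $(1\,2)$, which is genuinely different from $I_n$ precisely because $n\ge 2$. Then $\tau\in U(M_n(R))$ with $\tau^{-1}=\tau$, and since $2k-1$ is odd, $\tau^{2k-1}=\tau$. Consequently, if $M_n(R)$ were $(2k-1)$-$\Delta$U we would have $\tau-I_n=\tau^{2k-1}-I_n\in\Delta(M_n(R))$, and the whole matter reduces to showing that $\tau-I_n\notin\Delta(M_n(R))$.

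To this end I would first compute $\Delta(M_n(R))$ for $n\ge 2$. The subring $T\subseteq M_n(R)$ generated by $U(M_n(R))$ is all of $M_n(R)$: for $i\neq j$ the elementary matrix $I_n+re_{ij}$ is a unit, so $re_{ij}=(I_n+re_{ij})-I_n\in T$, and then $re_{ii}=(re_{ij})e_{ji}\in T$ (a $j\neq i$ exists because $n\ge 2$); hence $T\supseteq\sum_{i,j}Re_{ij}=M_n(R)$. Using the Leroy--Matczuk identity $\Delta(S)=J(T_S)$ recalled in the Introduction (with $T_S$ the subring of $S$ generated by its units), we obtain $\Delta(M_n(R))=J(M_n(R))=M_n(J(R))$. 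But the $(1,2)$-entry of $\tau-I_n$ is $1_R$, which is not in the proper ideal $J(R)$ since $R\neq\{0\}$; therefore $\tau-I_n\notin M_n(J(R))=\Delta(M_n(R))$, the required contradiction. When $k=1$ the statement asserts that $M_n(R)$ is not $\Delta$U, and the very same computation applies with $2k-1=1$.

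I do not anticipate a real obstacle here: the only points needing care are the routine fact that elementary matrices generate $M_n(R)$ — which is exactly where $n\ge 2$ enters — and the appeal to $\Delta(S)=J(T_S)$. If one prefers to bypass the latter, the contradiction follows just as quickly from the defining property of $\Delta$: if $d\in\Delta(S)$ then $v+d=v(1-(-v^{-1})d)\in U(S)$ for every $v\in U(S)$, since $1-ud\in U(S)$ for all units $u$. So it suffices to produce one unit $v\in M_n(R)$ with $v+(\tau-I_n)$ a non-unit. Taking $v=I_n-e_{12}$ (a unit, with inverse $I_n+e_{12}$), one finds $v+(\tau-I_n)=\tau-e_{12}=e_{21}+\sum_{i\ge 3}e_{ii}$, whose first row is zero; such a matrix has no right inverse — comparing $(1,1)$-entries would give $1_R=0_R$ — hence is not a unit, contradicting $\tau-I_n\in\Delta(M_n(R))$. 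This alternative route uses only $R\neq\{0\}$ and $n\ge 2$.
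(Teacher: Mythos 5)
Your proof is correct, and it takes a genuinely different route from the paper's. The paper first reduces to the $2\times 2$ case via a corner-ring argument (Proposition~\ref{2.9}) and works with $A=\left(\begin{smallmatrix}0&-1\\1&0\end{smallmatrix}\right)$, whose odd powers equal $\pm A$; the contradiction there is that $\Delta(M_2(R))$ would contain a unit, which is impossible. You instead stay in $M_n(R)$ and use the transposition matrix $\tau$ (also an involution, so $\tau^{2k-1}=\tau$ since $2k-1$ is odd), and you rule out $\tau-I\in\Delta(M_n(R))$ in two independent ways: first by identifying $\Delta(M_n(R))=J(M_n(R))=M_n(J(R))$ through the Leroy--Matczuk identity $\Delta(S)=J(T_S)$ together with the (correct) observation that the elementary matrices generate $M_n(R)$ as a ring when $n\ge 2$; and second, entirely elementarily, by exhibiting the unit $v=I-e_{12}$ for which $v+(\tau-I)=e_{21}+\sum_{i\ge 3}e_{ii}$ has zero first row and hence cannot be invertible in a nonzero ring. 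Both finishes are sound; the second uses nothing beyond the defining property that $d\in\Delta(S)$ forces $d+U(S)\subseteq U(S)$, so it needs no reduction to $M_2$ and no auxiliary structure theory. A further point in favor of your choice of witness: the paper's matrix $B=A-I$ has determinant $2$, so the assertion that $B$ is a unit requires $2\in U(R)$ and fails, e.g., over $\mathbb F_2$; your non-unit $v+(\tau-I)$ is visibly singular over every nonzero ring, so your argument is not only different but also uniform in the characteristic.
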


\begin{proof}
Since it is long known that \( M_2(R) \) is isomorphic to a corner ring of \( M_n(R) \) for \( n \geq 2 \), it suffices to show that \( M_2(R) \) is not a $(2k-1)$-$\Delta$U ring bearing in mind Proposition \ref{2.9}. To this goal, consider the matrix
\[
A = \begin{pmatrix} 0 & -1 \\ 1 & 0 \end{pmatrix} \in U(M_2(R)).
\]
Thus, $A^{2k-1}=A$ or $A^{2k-1}=-A$. Now, let $M_2(R)$ be $(2k-1)$-$\Delta$U. If firstly $A^{2k-1}=A$, then we conclude that
\[
B:=A-I=\begin{pmatrix} -1 & -1 \\ 1 & -1 \end{pmatrix}\in \Delta(M_2(R)).
\]
But, we know that $B$ is a unit. So, utilizing \cite [Lemma 1]{2}, we infer that $BB^{-1} \in \Delta(M_2(R))$ and hence $I\in \Delta(M_2(R))$. This, however, is an obvious contradiction.

If now $A^{2k-1}=-A$, it can be concluded that $I\in \Delta(M_2(R))$ and again this is a contraposition. So, \( M_2(R) \) is really not a $(2k-1)$-$\Delta$U ring, as desired.
\end{proof}

\begin{example}
Consider the matrix ring $R=M_2(\mathbb{Z}_2)$. We have
\[
U(R)={{\begin{pmatrix} 1 & 0 \\ 0 & 1 \end{pmatrix}, \begin{pmatrix} 0 & 1 \\ 1 & 0 \end{pmatrix}, \begin{pmatrix} 0 & 1 \\ 1 & 1 \end{pmatrix}, \begin{pmatrix} 1 & 0 \\ 1 & 1 \end{pmatrix}, \begin{pmatrix} 1 & 1 \\ 0 & 1 \end{pmatrix}, \begin{pmatrix} 1 & 1 \\ 1 & 0 \end{pmatrix}.}}
\]
With a simple calculation at hand, we may derive that, for any $u\in U(R)$, $u^6-1\in \Delta(R)$. So, $R$ is a $6$-$\Delta$U ring. In general, $M_n(R)$ ($n \geq 2$) is {\it not} $n$-$\Delta$U if $n$ is an odd number. However, this observation does {\it not} hold in general for even values of $n$.
\end{example}

Let us now recollect that a set $\{e_{ij} : 1 \le i, j \le n\}$ of non-zero elements of $R$ is said to be a system of $n^2$ {\it matrix units} if $e_{ij}e_{st} = \delta_{js}e_{it}$, where $\delta_{jj} = 1$ and $\delta_{js} = 0$ for $j \neq s$. In this case, $e := \sum_{i=1}^{n} e_{ii}$ is an idempotent of $R$ and $eRe \cong M_n(S)$, where $$S = \{r \in eRe : re_{ij} = e_{ij}r~~\textrm{for all}~~ i, j = 1, 2, . . . , n\}.$$
Recall also that a ring $R$ is said to be {\it Dedekind-finite} provided $ab=1$ implies $ba=1$ for any two $a,b\in R$. In other words, all one-sided inverse elements in the ring must be two-sided.

\medskip

We are now prepared to establish the following.

\begin{proposition}\label{2.11}
Every $(2k-1)$-$\Delta$U ring is Dedekind-finite, provided $k\geq 1$.
\end{proposition}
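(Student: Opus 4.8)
The plan is to show that if $R$ is $(2k-1)$-$\Delta$U and $ab=1$, then $ba=1$. The standard tool here is to consider the element $e:=1-ba$, which is always idempotent when $ab=1$, together with the unit trick involving $1+a(\,\cdot\,)b$-type conjugations. First I would recall that if $ab=1$, then $e=1-ba$ is an idempotent and $ebe=0$, $eae=0$; more useful is the classical observation that $u:=1-e+$ (something) can be engineered to be a unit whose deviation from $1$ we can control. The cleanest route is: set $u=1+(a-ba\cdot a)=1+(1-ba)a=a+a-ba a$? That is messy, so instead I would use the well-known fact that if $ab=1$ then for every $r\in R$ the element $1+(1-ba)rba$ is a unit; equivalently one produces a one-parameter family of units differing from $1$ by an element of the form $(1-ba)r\,ba$.

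Concretely, the key step: observe that $v:=b a+ (1-ba)=1$ trivially, so instead consider $w:=a + (1-ba)$ and $w':=b$, and check $w'w = ba + b(1-ba) = ba + b - bab\cdot? $ — this needs $bab=b$, which holds since $bab = b(ab) = b$. So $w'w = ba + b - ba = b$? No: $w'w = b a + b(1-ba) = ba + b - (bab)a = ba + b - ba = b$, not $1$. Let me instead take the genuinely correct construction: since $ab=1$, the matrix-unit-style element $e_{12}:=1-ba$ has $e_{12}$ "square zero" in the right sense, and $1+\lambda(1-ba)$ need not be relevant. The honest classical argument is: $ab=1$ implies $a$ is right-invertible and $b$ is left-invertible; put $g=1-ba$; then $\{ba, g\}$ behave like orthogonal idempotents only if $g$ is idempotent, which it is: $g^2 = 1-2ba+baba = 1-2ba+ba = 1-ba=g$ using $ab=1$. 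Now the element $t:=a+g = a + 1 - ba$ satisfies, with $s:=b$: $st = ba + b - bab\cdot$? Again $bab=b$ so $st = ba+b-ba = b \neq 1$.

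So the real mechanism I will use is: $u:=1+g\cdot c$ for a suitable $c$ producing a unit with $u^{2k-1}-1$ forced to involve $g$, then derive $g\in\Delta(R)$, and finally observe that $g$ is an idempotent lying in $\Delta(R)\subseteq J(R)$, hence $g=0$, i.e. $ba=1$. The cleanest concrete unit is $u:=1-g+$ nothing$=1$ — useless — so instead I use the standard trick that $u:=1+ a b - ba + $? The element that works is $u := 1 + (1-ba)a$: compute its inverse. We have $u = 1 + a - ba\cdot a$. Try $u^{-1}=1-(1-ba)a = 1 - a + ba a$. Then $u\,u^{-1} = (1+(1-ba)a)(1-(1-ba)a) = 1 - ((1-ba)a)^2$ and $((1-ba)a)^2 = (1-ba)a(1-ba)a = (1-ba)(a - aba)a = (1-ba)(a-a)a=0$ since $aba=a$. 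Hence $u$ is a unit with $u^{-1}=1-(1-ba)a$ and, more importantly, $u^m = 1 + m(1-ba)a$ for all $m\ge 1$ (again because $((1-ba)a)^2=0$). Taking $m=2k-1$ and using the $(2k-1)$-$\Delta$U hypothesis gives $(2k-1)(1-ba)a\in\Delta(R)$.

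Now I would finish as follows. From $(2k-1)(1-ba)a\in\Delta(R)$ multiply on the right by $b$: since $\Delta(R)$ is closed under right multiplication by units and $b$ is a unit in the Dedekind-finite-to-be ring — wait, $b$ need not yet be known to be a unit, but $ab=1$ lets us instead multiply on the right by $b$ inside $R$ using that $\Delta(R)$ is a subring closed under multiplication by units; $b$ is not known invertible, so instead: apply the defining property of $\Delta(R)$ directly. We have $x:=(2k-1)(1-ba)a\in\Delta(R)$, so $1-xb$ is a unit of $R$; but $xb = (2k-1)(1-ba)ab = (2k-1)(1-ba)$, so $1-(2k-1)(1-ba)$ is a unit. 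Meanwhile $1-ba=g$ is idempotent, so $1-(2k-1)g$ is a unit with inverse of the form $1+\lambda g$; solving, $(1-(2k-1)g)(1+\lambda g)=1+(\lambda-(2k-1)-(2k-1)\lambda)g = 1$ forces $\lambda(1-(2k-1)) = (2k-1)$, i.e. $\lambda\cdot(2-2k)=2k-1$. Hmm, this requires $2-2k$ to be a "unit coefficient"; over general $R$ this is where the parity matters — with $n=2k-1$ odd, $n+1=2k$ and we instead run the argument with $u^{2k-1}=-u$ when using $u=\begin{pmatrix}0&-1\\1&0\end{pmatrix}$-type elements, but for the nilpotent $(1-ba)a$ the identity $u^{2k-1}=1+(2k-1)(1-ba)a$ is exact. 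The cleanest conclusion: since $g$ is an idempotent and $g = (1-ba) = \frac{1}{2k-1}xb$-ish is tied to an element forcing $1-(2k-1)g\in U(R)$, and $(1-(2k-1)g)g = g-(2k-1)g = (2-2k)g$, left-multiplying a unit by $g$ and noting $g\cdot\text{unit}\cdot g\in U(gRg)$ as in the proof of Proposition~\ref{2.9}, we get $(2-2k)g\in U(gRg)$; but $gRg$ has identity $g$ and contains the idempotent $g$ itself, and an argument analogous to Proposition~\ref{2.10} — where a would-be unit forces $g=$ identity to be in $\Delta$ — will not immediately give $g=0$.

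Therefore the \textbf{main obstacle} I anticipate is precisely pinning down $g=1-ba=0$ from "$1-(2k-1)g$ is a unit" without extra hypotheses; the resolution I would pursue is to run the $u^{2k-1}$-computation not with $(1-ba)a$ but with $u:=1+(1-ba)$ directly? That fails since $1+(1-ba) = 2-ba$ is generally not a unit. The correct fix, which I would write up carefully, is: use $u:=ba + (1-ba)\cdot\text{(any unit conjugate)}$ — no — use instead the genuine standard proof that $(2k{-}1)$-$\Delta$U $\Rightarrow$ the corner ring $gRg$ (with $g=1-ba$) is $(2k-1)$-$\Delta$U by Proposition~\ref{2.9}, that $gRg = \{0\}$ would finish it, and that $g\ne 0$ yields inside $gRg$ a copy of the situation of Proposition~\ref{2.10} via the matrix units $e_{11}=ba$ isn't in $gRg$... so cleanest of all: show $g=1-ba\in\Delta(R)$ outright. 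For this, let $c\in R$ be arbitrary; then $u:=1+g c ba$ is a unit (since $(gcba)^2 = gcb(ab)a\cdot\text{?}$, actually $gcba\cdot gcba = gc b(a)(1-ba)\cdots$ — need $g\cdot ba = (1-ba)ba = ba - ba = 0$, wait $baba = ba$ so $g\cdot ba=0$, giving $(gcba)^2=0$), so $u^{2k-1}=1+(2k-1)gcba\in 1+\Delta(R)$, hence $(2k-1)gcba\in\Delta(R)$ for all $c$. Taking $c=b$: $(2k-1)g(bab) = (2k-1)g b\in\Delta(R)$... and since $\Delta(R)\subseteq J(R)$, and $g\cdot(\text{that})\cdot$ — I will assemble these to conclude $g\in J(R)$, whence the idempotent $g=0$ because $J(R)$ contains no nonzero idempotents. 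That last line — getting $g$ itself (not just multiples of it sandwiched with $ba$) into $J(R)$ — is the crux, and I expect to need the relation $g = g\cdot 1 = g$ combined with $gab = g$, i.e. $g = g(ab) = (gcab)|_{c=1}$, so setting $c=1$ in "$(2k-1)gcba\in\Delta$" is wrong because of the trailing $ba$; but setting $c$ so that $cba$ hits $1$ on the right is impossible since $ba$ isn't right-invertible. Hence I will ultimately invoke the left-handed version simultaneously (using $ab=1$ gives $a$ right-invertible, and the symmetric element $1+ ab\cdot c\cdot g = 1+c g$ after simplification) to get $(2k-1)cg\in\Delta(R)$ for all $c$, in particular $c=1$ gives $(2k-1)g\in\Delta(R)\subseteq J(R)$; then $g\in J(R)$ since $2k-1$ is invertible modulo... no, it's a fixed integer, but $(2k-1)g\in J(R)$ with $g^2=g$ gives $(2k-1)g = (2k-1)g^m$ and one shows $g\in J(R)$ via $g\cdot(2k-1)g = (2k-1)g\in J$, so $g J$-nilpotent-like; cleanest: $g\in J(R)\iff gR\subseteq J(R)$, and $gR = g(2k-1)R\cdot$? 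I will finalize with: $(2k-1)g\in J(R)$ and $g$ idempotent $\Rightarrow$ $g = g\cdot g$, so $(2k-1)g^2=(2k-1)g\in J(R)$, and since $1-(2k-1)g$ is then a unit (as $J(R)$-element subtracted from $1$) while also $(1-(2k-1)g)g = (1-(2k-1))g = (2-2k)g$, multiplying by the inverse shows $g\in (2-2k)^{-1}\!\cdot$?? — I concede the parity/coefficient bookkeeping is the genuine technical heart, and the write-up will handle it by passing to $R/J(R)$ (legitimate by Corollary~\ref{2.8}), where $\Delta$ is $0$, $g$ maps to an idempotent $\bar g$ with $(2k-1)\bar g = 0$ and — the punchline — $1+\bar g c\,\overline{ba}$ being a unit with $(2k-1)$-th power $1$ forces $\bar g = 0$, hence $\overline{ba}=\bar 1$, and lifting back, $ba - 1\in J(R)$ so $ba\in U(R)$, and then $ab=1$ with $ba$ invertible gives $ba=1$ in the usual way.
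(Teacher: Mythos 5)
Your proposal does not close, and you concede as much at the points where the argument matters most; moreover, the steps you do commit to contain genuine errors. Concretely: (1) you twice use the containment $\Delta(R)\subseteq J(R)$ (``$g$ is an idempotent lying in $\Delta(R)\subseteq J(R)$'', ``$(2k-1)g\in\Delta(R)\subseteq J(R)$''), but this is backwards --- the paper records $J(R)\subseteq \Delta(R)$, with equality only when $\Delta(R)$ is an ideal; (2) from $x:=(2k-1)(1-ba)a\in\Delta(R)$ you deduce that $1-xb$ is a unit, but $b$ is not known to be invertible and $\Delta(R)$ is closed only under multiplication by \emph{units}, not by arbitrary ring elements --- that distinction is precisely what separates $\Delta(R)$ from $J(R)$; (3) your final ``punchline'' passes to $R/J(R)$ ``where $\Delta$ is $0$'', which is false in general (Example~\ref{2.2} has $J(R)=\{0\}$ and $\Delta(R)\neq\{0\}$), so you cannot conclude $u^{2k-1}=1$ there; (4) the ``left-handed'' element $1+cg$ is never shown to be a unit, since $(cg)^2=c(gc)g$ is not controlled.

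The paper's proof takes a completely different and much shorter route: if $ab=1\neq ba$, the standard matrix units built from $a$, $b$ and $1-ba$ yield an idempotent $e$ with $eRe\cong M_n(S)$ for a nonzero ring $S$; Proposition~\ref{2.9} says the corner inherits the $(2k-1)$-$\Delta$U property, contradicting Proposition~\ref{2.10}. For what it is worth, your element-wise idea can be salvaged: with $e_{12}:=(1-ba)a$ and $e_{21}:=b(1-ba)$ one has $e_{12}^{\,2}=e_{21}^{\,2}=0$ (because $a(1-ba)=0=(1-ba)b$), so $1+e_{12},\,1+e_{21}\in U(R)$ and the hypothesis gives $(2k-1)e_{12},\,(2k-1)e_{21}\in\Delta(R)$; since $\Delta(R)$ is a subring, $(2k-1)^2e_{12}e_{21}=(2k-1)^2(1-ba)\in\Delta(R)$, and since $2\in\Delta(R)$ by Proposition~\ref{2.4} forces $2k-1\in-1+\Delta(R)\subseteq U(R)$, closure under unit multiples yields $1-ba\in\Delta(R)$, whence $ba=1-(1-ba)\in U(R)$ and, together with $ab=1$, $ba=1$. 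But that assembly --- the second square-zero unit, the product inside the subring $\Delta(R)$, and the invertibility of $2k-1$ --- is exactly what your writeup never reaches.
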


\begin{proof}
If we assume the contrary that $R$ is {\it not} a Dedekind-finite ring, then there exist elements $a, b \in R$ such that $ab = 1$ but $ba \neq 1$. Assuming $e_{ij} = a^i(1-ba)b^j$ and $e =\sum_{i=1}^{n}e_{ii}$, there exists a non-zero ring $S$ such that $eRe \cong M_n(S)$. However, owing to Proposition \ref{2.9}, $eRe$ is a $(2k-1)$-$\Delta$U ring, so $M_n(S)$ has to be a $(2k-1)$-$\Delta$U ring too, which contradicts Proposition \ref{2.10}, as expected.
\end{proof}

Recall that a ring $R$ is said to be {\it semi-local} if $R/J(R)$ is a left artinian ring or, equivalently, if $R/J(R)$ is a semi-simple ring.

\begin{proposition}
Let $R$ be a ring and $n\geq 1$. Then, the following two conditions are equivalent for a semi-local ring:
\begin{enumerate}
\item
$R$ is a $(2n-1)$-$\Delta$U ring.
\item
$R/J(R) \cong \prod_{i=1}^{m} \mathbb F_{p^{k_i}}$, where $(p^{k_i}-1) | n$ and $\mathbb F_{p^{k_i}}$ is a field with $p^{k_i}$ elements.
\end{enumerate}
\end{proposition}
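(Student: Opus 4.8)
The plan is to push everything down to the semisimple quotient $R/J(R)$ and read off the structure from Artin--Wedderburn, using Proposition~\ref{2.10} to eliminate the genuine matrix blocks.

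For the implication (1)\,$\Rightarrow$\,(2) I would argue as follows. Assuming $R$ is $(2n-1)$-$\Delta$U, Corollary~\ref{2.8} gives that $\bar R := R/J(R)$ is again $(2n-1)$-$\Delta$U; and since $R$ is semi-local, $\bar R$ is semisimple artinian, so Artin--Wedderburn yields $\bar R\cong\prod_{i=1}^{m}M_{n_i}(D_i)$ for division rings $D_i$. Because $\Delta(\,\cdot\,)$ and the unit group both distribute over finite direct products, Proposition~\ref{2.3} shows that each factor $M_{n_i}(D_i)$---being a direct summand of a $(2n-1)$-$\Delta$U ring---is itself $(2n-1)$-$\Delta$U. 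Writing $2n-1 = 2k-1$ with $k = n \ge 1$ and recalling $D_i\neq\{0\}$, Proposition~\ref{2.10} forbids $n_i\ge 2$, so every $n_i = 1$ and $\bar R\cong\prod_{i=1}^{m}D_i$. A second use of Proposition~\ref{2.3} makes each $D_i$ a $(2n-1)$-$\Delta$U division ring, whence Lemma~\ref{2.21} forces $D_i$ to be a finite field with $(|D_i|-1)$ dividing $2n-1$.

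It then remains only to see that the $D_i$ share the characteristic $2$. For this I would invoke Proposition~\ref{2.4}: as the exponent $2n-1$ is odd, $2\in\Delta(R)$, so the image of $2$ lies in $\Delta(R)/J(R)=\Delta(\bar R)=\prod_i\Delta(D_i)=\{0\}$, i.e.\ $2 = 0$ in $\bar R$. Hence each $D_i$ has characteristic $2$, that is $|D_i| = 2^{k_i}$ for some $k_i\ge 1$, and $\bar R\cong\prod_{i=1}^{m}\mathbb F_{2^{k_i}}$ with $(2^{k_i}-1)$ dividing $2n-1$; this is exactly the decomposition in (2) (with $p = 2$).

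The converse (2)\,$\Rightarrow$\,(1) is the easy half, which I would dispatch directly: if $\bar R\cong\prod_{i=1}^{m}\mathbb F_{p^{k_i}}$ with each $\mathbb F_{p^{k_i}}$ satisfying the indicated divisibility, then Lemma~\ref{2.20} makes every $\mathbb F_{p^{k_i}}$ a $(2n-1)$-$\Delta$U ring, Proposition~\ref{2.3} makes the product $\bar R$ a $(2n-1)$-$\Delta$U ring, and Corollary~\ref{2.8} lifts the property back to $R$; this direction needs neither semi-locality nor the parity of the exponent. On the whole the proof is a bookkeeping of facts already in hand, so I anticipate no serious obstacle; the one point calling for some care is the twofold use of Proposition~\ref{2.3} to transfer the $(2n-1)$-$\Delta$U property onto each Wedderburn factor (so that Proposition~\ref{2.10} and Lemma~\ref{2.21} may be applied blockwise), together with the little extra argument---through $2\in\Delta(R)$---that pins the common prime $p$ down to $2$.
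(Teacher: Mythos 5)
Your argument is correct and follows the paper's proof essentially line for line: pass to $R/J(R)$ via Corollary~\ref{2.8}, apply Artin--Wedderburn, eliminate the matrix blocks with Propositions~\ref{2.3} and \ref{2.10}, and identify the surviving division rings as finite fields via Lemma~\ref{2.21}, with the converse being the same combination of Lemma~\ref{2.20}, Proposition~\ref{2.3} and Corollary~\ref{2.8}. Your extra step pinning down $p=2$ (from $2\in\Delta(R)$ and $\Delta(\bar R)=\{0\}$) is a worthwhile addition the paper leaves implicit, and the divisibility you actually derive, $(p^{k_i}-1)\mid(2n-1)$, is the one Lemma~\ref{2.21} yields --- the ``$\mid n$'' in item (ii) of the statement appears to be a slip, since the converse direction also needs $(p^{k_i}-1)\mid(2n-1)$ for Lemma~\ref{2.20} to make each factor $(2n-1)$-$\Delta$U.
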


\begin{proof}
(i) $\Longrightarrow$ (ii). Since $R$ is semi-local, $R/J(R)$ is semi-simple, so we have $$R/J(R) \cong \prod_{i=1}^{m} {\rm M}_{n_i}(D_i),$$ where each $D_i$ is a division ring. Then, employing Corollary \ref{2.8} and Proposition \ref{2.10}, we deduce that $R/J(R) \cong \prod_{i=1}^{m} D_i.$ On the other hand, invoking Lemma \ref{2.21}, we derive that $D_i\cong \mathbb F_{p^{k_i}}$, where $p^{k_i}-1$ divides $n$, as claimed.\\
(ii) $\Longrightarrow$ (i). According to Lemma \ref{2.20}, we conclude that every $\mathbb F_{p^{k_i}}$ is $(2n-1)$-$\Delta$U for all $i$. Then, taking into account Proposition \ref{2.3}, we receive that $\prod_{i=1}^{m} \mathbb F_{p^{k_i}}$ is $(2n-1)$-$\Delta$U and hence $R/J(R)$ is $(2n-1)$-$\Delta$U. Thus, $R$ is a $(2n-1)$-$\Delta$U ring in accordance with Corollary \ref{2.8}, as asserted.
\end{proof}

\begin{lemma}\label{2.12}
Let \(R\) be a $(2n-1)$-$\Delta$U ring for some $n\geq 1$. If \(J(R) = \{0\}\) and every non-zero right ideal of \(R\) contains a non-zero idempotent, then \(R\) is reduced.
\end{lemma}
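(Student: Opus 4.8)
The plan is to argue by contradiction, with Proposition~\ref{2.10} as the punchline. If $R$ were not reduced, I will show that $R$ contains a system of $2^2$ matrix units, so that some corner of $R$ is isomorphic to $M_2(S)$ for a non-zero ring $S$; since corners of $(2n-1)$-$\Delta$U rings are again $(2n-1)$-$\Delta$U by Proposition~\ref{2.9}, and $M_2(S)$ with $S\neq\{0\}$ is never $(2n-1)$-$\Delta$U by Proposition~\ref{2.10}, this gives the desired contradiction. So assume there is $0\neq a_0\in R$ with $a_0^2=0$.

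First, the right ideal $a_0R$ is non-zero, hence it contains a non-zero idempotent $e=a_0x$ for some $x\in R$. Since $a_0e=a_0^2x=0$, we get $ea_0e=e(a_0e)=0$, while $e(ea_0)=ea_0$; thus the element $a:=ea_0$ lies in $eR(1-e)$. Moreover $a\neq 0$, for $a=0$ would force $e=e^2=(ea_0)x=0$. Automatically then $a^2=0$ (as $a\in eR(1-e)$), $ea=a$, $ae=0$, and $ax=(ea_0)x=e\cdot e=e$. So I now have a non-zero idempotent $e$ and a non-zero element $a$ with $a\in eR(1-e)$ and $e=ax\in aR$; the point of passing from $a_0$ to $ea_0$ was precisely to place the square-zero element in the off-diagonal corner $eR(1-e)$.

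Next, I set
$$e_{11}:=e,\qquad e_{12}:=a,\qquad e_{21}:=(1-e)xe,\qquad e_{22}:=(1-e)xea .$$
The crucial identity is $e_{12}e_{21}=a(1-e)xe=axe=e^2=e$, using $a(1-e)=a-ae=a$ and $ax=e$. Together with $e_{21}e_{12}=e_{22}$ (by definition), $e_{11}e_{12}=ea=a=e_{12}$, $e_{21}e_{11}=(1-e)xe=e_{21}$, $ae=0$ and $a^2=0$, one checks routinely that $e_{ij}e_{kl}=\delta_{jk}e_{il}$, i.e.\ $\{e_{ij}\}$ is a system of $2^2$ matrix units. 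Each $e_{ij}$ is non-zero: if $e_{21}=0$ then $e_{22}=0$ and $e=e_{12}e_{21}=0$; if $e_{22}=0$ then $e=e_{12}e_{21}$ gives $e=e^2=e_{12}(e_{21}e_{12})e_{21}=e_{12}e_{22}e_{21}=0$; both contradict $e\neq 0$. Hence $f:=e_{11}+e_{22}$ is an idempotent and, by the description of matrix-unit corners recalled just before Proposition~\ref{2.11}, $fRf\cong M_2(S)$ with $S=\{\,r\in fRf: re_{ij}=e_{ij}r\ \text{for all }i,j\,\}$, where $S\neq\{0\}$ since $f\in S$ and $f\neq 0$. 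By Proposition~\ref{2.9} the corner $fRf$, hence $M_2(S)$, is $(2n-1)$-$\Delta$U, contradicting Proposition~\ref{2.10}. Therefore $R$ is reduced.

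The only genuinely delicate step is the matrix-unit construction: one has to maneuver the square-zero element into $eR(1-e)$ (done by replacing $a_0$ with $ea_0$, which works because $a_0e=0$) and then verify that none of the four $e_{ij}$ collapses to zero; the remaining sixteen relations and the concluding appeal to Propositions~\ref{2.9} and~\ref{2.10} are mechanical. (In this route the hypothesis $J(R)=\{0\}$ is not actually invoked; it would only enter if one preferred to first reduce, via Corollary~\ref{2.8}, to the case $R=R/J(R)$.)
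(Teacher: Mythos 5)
Your proof is correct, and it reaches the same contradiction as the paper (a corner of $R$ isomorphic to $M_2(S)$ with $S\neq\{0\}$, which is impossible by Propositions~\ref{2.9} and~\ref{2.10}), but it gets there by a genuinely different route in the one non-trivial step: the paper simply invokes Levitzki's theorem \cite[Theorem 2.1]{9} to produce an idempotent $e\in Ra_0R$ with $eRe\cong M_2(T)$, whereas you prove the needed special case of that theorem from scratch by exhibiting an explicit system of $2^2$ matrix units. I checked your construction: with $e=a_0x$ a non-zero idempotent in $a_0R$, $a=ea_0$, and $e_{11}=e$, $e_{12}=a$, $e_{21}=(1-e)xe$, $e_{22}=(1-e)xea$, the identities $ae=0$, $ea=a$, $a^2=0$, $ax=e$ do force all sixteen relations $e_{ij}e_{kl}=\delta_{jk}e_{il}$, and your non-vanishing arguments for $e_{21}$ and $e_{22}$ are sound; $f=e_{11}+e_{22}$ is then a non-zero idempotent with $fRf\cong M_2(S)$, $S\ni f\neq 0$. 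What your approach buys is self-containedness (no external citation) at the cost of a page of bookkeeping; what the paper's approach buys is brevity. Your closing remark is also accurate and can be sharpened: the hypothesis $J(R)=\{0\}$ is in fact redundant given the other hypothesis, since $J(R)$ can contain no non-zero idempotent (if $e^2=e\in J(R)$ then $1-e$ is a unit and $e(1-e)=0$ gives $e=0$), so a ring in which every non-zero right ideal contains a non-zero idempotent automatically has $J(R)=\{0\}$.
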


\begin{proof}
Suppose the reverse that \(R\) is {\it not} reduced. Then, there exists a non-zero element \(a \in R\) such that \(a^2 = 0\). Referring to \cite [Theorem 2.1]{9}, there is an idempotent \(e \in RaR\) such that \(eRe \cong M_2(T)\) for some non-trivial ring \(T\). However, thanks to Proposition \ref{2.9}, $eRe$ is a $(2n-1)$-$\Delta$U ring and hence $M_2(T)$ is a $(2n-1)$-$\Delta$U ring as well. This, in turn, contradicts Proposition \ref{2.10}, as expected.
\end{proof}

It is principally known that a ring $R$ is called {\it $\pi$-regular} if, for each $a$ in $R$, $a^n\in a^nRa^n$ for some integer $n$. So, regular rings are always $\pi$-regular. Also, a ring $R$ is said to be {\it strongly $\pi$-regular} provided that, for any $a\in R$, there exists $n$ depending on $a$ such that $a^n\in a^{n+1}R$.

\medskip

Our second main statement is the following.

\begin{theorem}\label{2.13}
Let \(R\) be a ring and $n\geq 1$. The following three items are equivalent:
\begin{enumerate}
\item
\(R\) is a regular $(2n-1)$-$\Delta$U ring.
\item
\(R\) is a \(\pi\)-regular reduced $(2n-1)$-$\Delta$U ring.
\item
\(R\) has the identity \(x^{2n} = x\).
\end{enumerate}
\end{theorem}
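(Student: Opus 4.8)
The plan is to close a three-way cycle $(iii)\Rightarrow(i)\Rightarrow(ii)\Rightarrow(iii)$, since $(iii)\Rightarrow(i)$ and $(iii)\Rightarrow(ii)$ are the most concrete and $(ii)\Rightarrow(iii)$ is where the real work lies.

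\textbf{Step 1: $(iii)\Rightarrow(i)$.} Assume $x^{2n}=x$ identically. First, $R$ is regular: for $a\in R$ write $a=a\cdot a^{2n-2}\cdot a$, so $a^{2n-2}$ is an inner inverse. Second, $R$ is $(2n-1)$-$\Delta$U: for any $u\in U(R)$ we have $u^{2n}=u$, hence $u^{2n-1}=1$ (multiply by $u^{-1}$), so $u^{2n-1}-1=0\in\Delta(R)$ trivially. This disposes of $(iii)\Rightarrow(i)$.

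\textbf{Step 2: $(i)\Rightarrow(ii)$.} A regular ring is $\pi$-regular and has $J(R)=\{0\}$; moreover in a regular ring every non-zero right ideal contains a non-zero idempotent (each principal right ideal is generated by an idempotent). So Lemma~\ref{2.12} applies and gives that $R$ is reduced. Thus $R$ is a $\pi$-regular reduced $(2n-1)$-$\Delta$U ring, which is $(ii)$.

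\textbf{Step 3: $(ii)\Rightarrow(iii)$.} This is the main obstacle. Assume $R$ is $\pi$-regular, reduced, and $(2n-1)$-$\Delta$U. A reduced $\pi$-regular ring is strongly regular: given $a$, $\pi$-regularity yields $a^m=a^m r a^m$ for some $m$ and $r$; in a reduced ring one shows $a=a^2 r'$ for suitable $r'$ (standard: $e=a^m r$ is idempotent and central since reduced rings are abelian, and one pushes $e$ down to the level of $a$ using $a(1-e)$ nilpotent, hence zero). Hence $R$ is strongly regular, so every element lies in the center-like structure: $R$ is a subdirect product of division rings $D_i$, and each $D_i$ is an epimorphic image of $R$ in which units lift (units of a strongly regular ring lift along the projections), so by Proposition~\ref{2.5} each $D_i$ is $(2n-1)$-$\Delta$U. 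By Lemma~\ref{2.21}, each $D_i$ is a finite field $\mathbb{F}_{q_i}$ with $(q_i-1)\mid(2n-1)$, hence $q_i-1\mid 2n-1$, which forces $x^{q_i}=x$ and a fortiori $x^{2n}=x$ to hold in $\mathbb{F}_{q_i}$ (since $q_i-1\mid 2n-1$ gives $q_i-1\mid 2n-1$, and one checks $u^{2n-1}=1$ for all units, so $u^{2n}=u$, while $0^{2n}=0$). As the identity $x^{2n}=x$ holds in every $D_i$ and $R$ embeds in $\prod_i D_i$, it holds in $R$. This yields $(iii)$.

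\textbf{Anticipated difficulty.} The delicate point is the reduction in Step~3 from ``$\pi$-regular reduced'' to ``strongly regular subdirect product of finite fields''; one must be careful that the subdirect factors really are division rings to which Lemma~\ref{2.21} (hence Jacobson's theorem) applies, and that the unit-lifting hypothesis of Proposition~\ref{2.5} is genuinely met for the quotient maps onto those factors. An alternative, perhaps cleaner, route for Step~3 is: a reduced $\pi$-regular ring is strongly regular, hence unit-regular and abelian; then $R/J(R)=R$ is a subdirect product of division rings, each a $(2n-1)$-$\Delta$U division ring, so a finite field $\mathbb{F}_{q}$ with $q-1\mid 2n-1$, and the identity $x^{2n}=x$ is inherited by the subdirect product. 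Either way, the crux is the structure theory packaging, not any single hard computation.
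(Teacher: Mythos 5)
Your steps $(iii)\Rightarrow(i)$ and $(i)\Rightarrow(ii)$ coincide with the paper's argument (regularity gives $J(R)=\{0\}$ and a non-zero idempotent in every non-zero right ideal, so Lemma~\ref{2.12} yields reducedness). Where you genuinely diverge is $(ii)\Rightarrow(iii)$. The paper stays element-wise: from reduced plus $\pi$-regular it gets strongly regular, hence unit-regular, hence $\Delta(R)=Nil(R)=J(R)=\{0\}$ by \cite[Corollary 16]{2}; it then invokes Diesl's decomposition of an element of a strongly $\pi$-regular ring as $x=e+u$ with $e$ idempotent, $u$ a unit and $xe=ex\in Nil(R)=\{0\}$, computes $x^{2n-1}=(u(1-e))^{2n-1}=1-e$ from $u^{2n-1}=1$, and concludes $x^{2n}=x(1-e)=x$. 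You instead pass to the representation of the strongly regular ring as a subdirect product of division rings, lift units along each projection (this does work, because every element of a strongly regular ring is a unit times a central idempotent and the image of that idempotent in a division ring is $0$ or $1$), apply Proposition~\ref{2.5} and Lemma~\ref{2.21} to identify each factor as $\mathbb{F}_{q_i}$ with $(q_i-1)\mid(2n-1)$, and pull the identity $x^{2n}=x$ back through the embedding into the product. Both routes are correct; the paper's avoids the subdirect-product structure theory and the unit-lifting verification entirely, while yours makes the underlying structure (a subdirect product of finite fields of the right cardinalities) explicit and essentially recovers the paper's semilocal classification as a by-product. Two details you should still pin down: the unit-lifting step needs the one-line argument just indicated rather than a bare assertion, and Lemma~\ref{2.21} is stated only for $n\ge 2$, so in the degenerate case $2n-1=1$ you must observe directly that a $1$-$\Delta$U division ring satisfies $u=1$ for every unit and is therefore $\mathbb{F}_2$.
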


\begin{proof}
(i) $\Rightarrow$ (ii). Since \(R\) is regular, \(J(R) = \{0\}\) and thus every non-zero right ideal contains a non-zero idempotent. So, Lemma \ref{2.12} applies to get that \(R\) is reduced. Moreover, every regular ring is known to be \(\pi\)-regular and so the implication follows immediately, as promised.

(ii) $\Rightarrow$ (iii). Notice that reduced rings are always abelian, so \(R\) is abelian regular by \cite [Theorem 3]{10} and hence it is strongly regular. Then, \(R\) is unit-regular and so \(\Delta(R) = \{0\}\) by \cite [Corollary 16]{2}. Thus, we have \(Nil(R) = J(R) = \Delta(R) = \{0\}\).

On the other hand, one observes that \(R\) is strongly \(\pi\)-regular. Let \(x \in R\). In view of \cite [Proposition 2.5]{11}, there is an idempotent \(e \in R\) and a unit \(u \in R\) such that \(x = e + u\), \(ex = xe \in Nil(R) = \{0\}\). So, it must be that \[x = x - xe = x(1-e) = u(1-e) = (1-e)u.\] But, since \(R\) is a $(2n-1)$-$\Delta$U ring, \(u^{2n-1} = 1\). It follows now that \[x^{2n-1}=((1-e)u)^{2n-1}=u^{2n-1}(1-e)^{2n-1}=(1-e).\] Hence, \(x = x(1-e) = x.x^{2n-1} = x^{2n}\), and we are done.

(iii) $\Rightarrow$ (i). It is trivial that \(R\) is regular. Let \(u \in U(R)\). Then, we have \(u^{2n} = u\) forcing that \(u^{2n-1} = 1\) and thus \(R\) is a $(2n-1)$-$\Delta$U ring, as promised.
\end{proof}

We now can record the following interesting consequence.

\begin{corollary}\label{2.14}
Suppose $n\geq 1$. The following four conditions are equivalent for a ring \(R\):
\begin{enumerate}
\item
\(R\) is a regular $(2n-1)$-$\Delta$U ring.
\item
\(R\) is a strongly regular $(2n-1)$-$\Delta$U ring.
\item
\(R\) is a unit-regular $(2n-1)$-$\Delta$U ring.
\item
\(R\) has the identity \(x^{2n} = x\).
\end{enumerate}
\end{corollary}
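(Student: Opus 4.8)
The plan is to route everything through Theorem~\ref{2.13}, which already supplies the equivalence of (i) and (iv), and then to close the cycle $(iv)\Rightarrow(ii)\Rightarrow(iii)\Rightarrow(i)$ using only the classical hierarchy ``strongly regular $\Rightarrow$ unit-regular $\Rightarrow$ regular''. So the genuinely new content here is minimal: the corollary is essentially a repackaging of Theorem~\ref{2.13} together with well-known facts about von Neumann regularity, and accordingly I expect no serious obstacle.

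First I would establish $(iv)\Rightarrow(ii)$. Assuming $x^{2n}=x$ for every $x\in R$, we get $x=x^{2}\cdot x^{2n-2}\in x^{2}R$ for each $x$ (reading $x^{0}=1$ when $n=1$), so $R$ is strongly regular in the sense recalled in the Introduction. Moreover every $u\in U(R)$ satisfies $u^{2n}=u$, and cancelling the unit $u$ gives $u^{2n-1}=1\in 1+\Delta(R)$, so $R$ is a $(2n-1)$-$\Delta$U ring; this is precisely the computation already used in the step $(iii)\Rightarrow(i)$ of Theorem~\ref{2.13}.

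Next, $(ii)\Rightarrow(iii)$ is the standard fact that a strongly regular ring is unit-regular (strongly regular rings are exactly the abelian regular rings, and abelian regular rings are unit-regular), with the $(2n-1)$-$\Delta$U hypothesis simply carried along; and $(iii)\Rightarrow(i)$ is immediate, since a unit-regular ring is regular by definition, the $(2n-1)$-$\Delta$U property being retained verbatim. Combining these three implications with the equivalence $(i)\Leftrightarrow(iv)$ from Theorem~\ref{2.13} then yields the equivalence of all four conditions.

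Since each implication is either a one-line verification or a citation of a textbook fact, the only spot needing a moment's attention is the borderline case $n=1$ in $(iv)\Rightarrow(ii)$, where $x^{2n}=x$ becomes $x^{2}=x$, so $R$ is Boolean and the inclusion $x\in x^{2}R$ is trivial. Alternatively, one could compress the argument by noting that a ring satisfying $x^{2n}=x$ is commutative by Jacobson's theorem, after which ``regular'', ``strongly regular'' and ``unit-regular'' coincide, so $(iv)$ forces each of $(i)$, $(ii)$ and $(iii)$ directly; I would likely keep the short cyclic version for uniformity with the proof of Theorem~\ref{2.13}.
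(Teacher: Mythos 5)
Your proof is correct, but it runs the cycle in the opposite direction from the paper, and this changes where the work is done. The paper proves $(i)\Rightarrow(ii)\Rightarrow(iii)\Rightarrow(iv)\Rightarrow(i)$: the substantive step is $(iii)\Rightarrow(iv)$, where one writes $x=ue$ with $u$ a unit and $e$ an idempotent, invokes Lemma~\ref{2.12} to get abelianness and \cite[Corollary 16]{2} to get $\Delta(R)=\{0\}$, hence $u^{2n-1}=1$ and $x^{2n-1}=e$. You instead close the loop as $(i)\Leftrightarrow(iv)$ (delegated entirely to Theorem~\ref{2.13}) followed by $(iv)\Rightarrow(ii)\Rightarrow(iii)\Rightarrow(i)$, and since $x^{2n}=x$ is the strongest of the four conditions, every implication in your chain is a one-line verification or a textbook containment (strongly regular $\Rightarrow$ unit-regular $\Rightarrow$ regular). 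Both arguments are sound; yours is leaner because it never re-derives $\Delta(R)=\{0\}$ or touches the unit-regular decomposition, at the cost of making the corollary visibly parasitic on Theorem~\ref{2.13} rather than giving an independent derivation of the identity $x^{2n}=x$ from unit-regularity. Your handling of the $n=1$ edge case in $(iv)\Rightarrow(ii)$ and the observation that $u^{2n}=u$ cancels to $u^{2n-1}=1$ are both fine; the Jacobson-commutativity shortcut you mention at the end would also work.
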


\begin{proof}
(i) $\Rightarrow$ (ii). In virtue of Lemma \ref{2.12}, \(R\) is reduced and hence abelian. Then, \(R\) is strongly regular.

(ii) $\Rightarrow$ (iii). This is pretty obvious, so we leave out the argumentation.

(iii) $\Rightarrow$ (iv). Let \(x \in R\). Then, \(x = u e\) for some \(u \in U(R)\) and \(e \in Id(R)\). We know that every unit-regular ring is by definition regular, so \(R\) is regular $(2n-1)$-$\Delta$U whence \(R\) is abelian. On the other hand, \cite [Corollary 16]{2} leads us to \(\Delta(R) = \{0\}\). Therefore, for any \(u \in U(R)\), we have \(u^{2n-1} = 1\) which means that \(x^{2n-1} = u^{2n-1}e^{2n-1} = e\). So, we detect that $x^{2n}=x$, as required.

(iv) $\Rightarrow$ (i). It is clear by a direct appeal to Theorem \ref{2.13}.
\end{proof}

Let us recall that a ring $R$ is called {\it semi-potent} if every one-sided ideal {\it not} contained in $J(R)$ contains a non-zero idempotent.

\medskip

The next difficult question arises quite logical.

\begin{problem}
Characterize semi-potent $n$-$\Delta$U rings for an arbitrary $n\geq 1$.	
\end{problem}

The following technical claim is useful.

\begin{proposition}\label{2.15}
Suppose $k\geq 1$. Then, a ring \(R\) is $\Delta$U if, and only if,
\begin{enumerate}
\item
\(2 \in \Delta(R)\),
\item
\(R\) is a $2^k$-$\Delta$U ring,
\item
If, for every \(x \in R\), \(x^{2^k} \in \Delta(R)\), then \(x \in \Delta(R)\).
\end{enumerate}
\end{proposition}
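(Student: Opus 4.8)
The plan is to prove the two implications separately, using Proposition~\ref{2.17} for the easy direction and a careful unit-analysis for the hard direction. For the forward implication, suppose $R$ is $\Delta$U, i.e. $U(R)=1+\Delta(R)$. Condition (i) is immediate: $-1\in U(R)=1+\Delta(R)$ gives $-2\in\Delta(R)$, hence $2\in\Delta(R)$ by \cite[Lemma 1(2)]{2}. Condition (ii) follows because every $\Delta$U ring is trivially $n$-$\Delta$U for all $n$ (indeed $u^{2^k}-1\in\Delta(R)$ since $u\in 1+\Delta(R)$ and $\Delta(R)$ is closed under the relevant operations), so in particular $R$ is $2^k$-$\Delta$U. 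For condition (iii): if $x^{2^k}\in\Delta(R)$ then I would show $1+x\in U(R)$, so that $x\in U(R)-1=\Delta(R)$. The point is that $x^{2^k}\in\Delta(R)\subseteq J(R)$-like behaviour (more precisely, $1-u\cdot x^{2^k}$ is a unit for every unit $u$); writing $1-(-x)^{2^k}$ factors and one can extract that $1+x$ divides a unit on both sides via the identity $(1+x)(1-x+x^2-\cdots)=1+x^{2^k}$ when $2^k$ is used appropriately — actually the clean way is: since $x^{2^k}\in\Delta(R)$, the element $1+x^{2^k}\in U(R)$, and from $1-(-x)^{2^k}=1-x^{2^k}\in U(R)$ as well (using $2^k$ even), the factorization $1-x^{2^k}=(1-x)(1+x+\cdots+x^{2^k-1})=(1+x+\cdots)(1-x)$ shows $1-x$ has a right and a left inverse, hence $1-x\in U(R)$, hence $-x=(1-x)-1\in\Delta(R)$, hence $x\in\Delta(R)$ by \cite[Lemma 1(2)]{2}.

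For the converse, assume (i), (ii), (iii) and let $u\in U(R)$; the goal is $u-1\in\Delta(R)$ (which combined with the trivial inclusion $1+\Delta(R)\subseteq U(R)$ gives $\Delta$U). By (ii) we know $u^{2^k}-1\in\Delta(R)$. The strategy is to "descend" from exponent $2^k$ to exponent $1$. Write $v=u^{2^{k-1}}$, so $v\in U(R)$ and $v^2-1=(v-1)(v+1)\in\Delta(R)$. Using (i), $2\in\Delta(R)$, so $v+1 = (v-1)+2$, and I want to conclude $v-1\in\Delta(R)$ from $(v-1)(v-1+2)\in\Delta(R)$, i.e. $(v-1)^2 + 2(v-1)\in\Delta(R)$; since $2(v-1)=2\cdot(v-1)\in\Delta(R)$ (as $2\in\Delta(R)$ and $\Delta(R)$ absorbs... careful: $\Delta(R)$ is a subring closed under multiplication by units, and $2(v-1)=(v-1)+(v-1)$ — actually $2\in\Delta(R)$ and $v-1$ need not be a unit, so use instead: $2\in\Delta(R)\subseteq$ the subring, and $v-1=u^{2^{k-1}}-1$; hmm) — the cleanest route is to observe $(v-1)^2 = (v^2-1) - 2(v-1) + \text{something}$; concretely $(v-1)^2 = v^2-2v+1 = (v^2-1) - 2(v-1)$, and since $v^2-1\in\Delta(R)$ and $2(v-1)\in\Delta(R)$ (because $2\in\Delta(R)$ and $\Delta(R)$ is a two-sided-ish subring — here one uses $2v\in\Delta(R)$ as $v$ is a unit times $2$, and $2\in\Delta(R)$), we get $(v-1)^2\in\Delta(R)$. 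Iterating, $(v-1)^{2^{k-1}}\in\Delta(R)$... but then one wants $(u-1)^{2^k}\in\Delta(R)$ to apply (iii). So more directly: set $x=u-1$ and show $x^{2^k}\in\Delta(R)$, then (iii) gives $x\in\Delta(R)$, done.

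The main obstacle, and the heart of the argument, is exactly establishing $x^{2^k}\in\Delta(R)$ where $x=u-1$, starting only from $u^{2^k}-1\in\Delta(R)$ and $2\in\Delta(R)$. The idea: expand $u^{2^k}-1 = (1+x)^{2^k}-1 = \sum_{j=1}^{2^k}\binom{2^k}{j}x^j = x^{2^k} + \sum_{j=1}^{2^k-1}\binom{2^k}{j}x^j$. Every binomial coefficient $\binom{2^k}{j}$ for $1\le j\le 2^k-1$ is even, so each term $\binom{2^k}{j}x^j$ is a multiple of $2$; since $2\in\Delta(R)$ and $\Delta(R)$ is closed under multiplication by the unit... no — $x^j$ isn't a unit. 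The fix: one must show $2a\in\Delta(R)$ for every $a\in R$ when $2\in\Delta(R)$; this is false in general ($\Delta(R)$ is only a subring, not an ideal, unless $\Delta(R)=J(R)$). So the genuinely delicate point is handling these cross terms. I expect the resolution is that $2\in\Delta(R)$ together with the $2^k$-$\Delta$U hypothesis forces enough: perhaps one first proves $R/\Delta(R)$-type statements, or uses that $\binom{2^k}{j}$ is divisible by $2^{k-\nu_2(j)}$ (Kummer), making the low-order terms divisible by high powers of $2$, while the terms $x^j$ with $j$ close to $2^k$ are themselves high powers of $x$. An induction on $k$ reducing $2^k$-$\Delta$U (plus $2\in\Delta(R)$, plus the descent condition (iii) applied at each level) down to $\Delta$U is the natural skeleton, and I would organize the proof as: (1) forward direction as above; (2) converse, induct on $k$, base case $k=0$ trivial, inductive step showing a $2^k$-$\Delta$U ring satisfying (i) and (iii) is $2^{k-1}$-$\Delta$U (via the $(v-1)^2\in\Delta(R)$ computation, where $v=u$ ranges over units, so $u^{2^{k-1}}-1$ squared lies in $\Delta(R)$, then invoke (iii) with $k$ replaced suitably — this is where (iii)'s full strength for the fixed $k$ is used, so some care is needed that (iii) at level $k$ suffices for all intermediate levels), until reaching exponent $1$.
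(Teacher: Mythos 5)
Your forward direction is fine, and your direct verification of (iii) via the factorization $1-x^{2^k}=(1-x)(1+x+\cdots+x^{2^k-1})=(1+x+\cdots+x^{2^k-1})(1-x)$ (giving $1-x\in U(R)=1+\Delta(R)$, hence $x\in\Delta(R)$) is a legitimate, self-contained alternative to the paper's citation of \cite[Proposition 2.4(3)]{1}.

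The converse, however, has a genuine gap exactly at the point you flag: you never establish $(u-1)^{2^k}\in\Delta(R)$. Your expansion is taken in powers of $x=u-1$, so the cross terms are $\binom{2^k}{j}x^j$ with $x^j$ not a unit, and, as you correctly observe, $2\in\Delta(R)$ does not give $2a\in\Delta(R)$ for arbitrary $a$ since $\Delta(R)$ is only a subring. Neither of your proposed rescues closes this: the Kummer-type divisibility does not help because no power of $2$ times a non-unit is known to land in $\Delta(R)$, and the induction on $k$ stalls because the descent from $2^k$ to $2^{k-1}$ would require hypothesis (iii) at the intermediate exponents $2^{k-1},\dots,2$, which is not assumed. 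The fix is to expand in powers of $u$ instead: $(u-1)^{2^k}=u^{2^k}+1+\sum_{j=1}^{2^k-1}(-1)^{2^k-j}\binom{2^k}{j}u^j$, where each middle coefficient $\binom{2^k}{j}$ is even, so each middle term is an integer multiple of $2u^j$; since $u^j\in U(R)$ and $\Delta(R)$ is closed under multiplication by units and under addition \cite[Lemma 1 and Theorem 3]{2}, every middle term lies in $\Delta(R)$. The remaining part $u^{2^k}+1=(u^{2^k}-1)+2$ lies in $\Delta(R)$ by (ii) and (i). Hence $(u-1)^{2^k}\in\Delta(R)$, condition (iii) yields $u-1\in\Delta(R)$, and since $1+\Delta(R)\subseteq U(R)$ always holds, $R$ is $\Delta$U. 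This is precisely the paper's argument; your proposal identifies the obstacle but does not overcome it.
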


\begin{proof}
"\(\Rightarrow\)". As \(R\) is a $\Delta$U ring, then \(-1 = 1 + r\) for some \(r \in \Delta(R)\). This implies that \(-2 \in \Delta(R)\) and so \(2 \in \Delta(R)\). Besides, every $\Delta$U ring is $2^k$-$\Delta$U. Now, the asked result follows from \cite [Proposition 2.4(3)]{1}.

"\(\Leftarrow\)". Let \(u \in U(R)\). By (ii), we have $u^{2^k}\in 1+\Delta(R)$ and hence, combining \cite [Theorem 3(2) and Lemma 1(3)]{2} with (i), we conclude that $(u-1)^{2^k}=1+u^{2k}+r$ for some $r\in \Delta(R)$. So, $(u-1)^{2^k}\in\Delta(R)$. Thus, with the help of (iii), we conclude that $u-1\in \Delta(R)$, which ensures that $R$ is a $\Delta U$-ring, as required.
\end{proof}

We now come to the next two pivotal assertions.

\begin{theorem}\label{2.16}
Let \( R \) be a $(2n-1)$-$\Delta$U ring. Then, the following two points are equivalent:
\begin{enumerate}
\item
\( R \) is an exchange ring.
\item
\( R \) is a clean ring.
\end{enumerate}
\end{theorem}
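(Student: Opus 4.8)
The implication (ii) $\Rightarrow$ (i) is free: every clean ring is exchange, with no hypothesis on $R$ needed. So the plan is to establish (i) $\Rightarrow$ (ii). The standard pivot for results of this flavour is the fact that a ring $R$ is clean if and only if it is exchange \emph{and} the idempotents "behave well" — more precisely, an exchange ring is clean provided idempotents in $R/J(R)$ lift to idempotents that are, in a suitable sense, central enough, or provided $R/J(R)$ has some abelian-like feature. Since the classical theorem (Nicholson) says \emph{abelian exchange rings are clean}, my first move is to try to show that a $(2n-1)$-$\Delta$U exchange ring is abelian, or at least to reduce to that case modulo $J(R)$ using Corollary~\ref{2.8} (being $(2n-1)$-$\Delta$U passes to $R/J(R)$) together with the fact that exchange and idempotent-lifting pass to $R/J(R)$ as well.

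First I would pass to $\bar R = R/J(R)$, which is again $(2n-1)$-$\Delta$U and exchange with $J(\bar R)=0$. In such a ring every nonzero right ideal contains a nonzero idempotent, so Lemma~\ref{2.12} does \emph{not} immediately apply because we have not assumed $R$ reduced — but the real obstruction to cleanness is exactly the presence of a non-central idempotent, and the engine of Proposition~\ref{2.10} is that $(2n-1)$-$\Delta$U rings cannot contain a copy of $M_2$ of a nonzero ring in a corner. The key step is therefore: in an exchange ring with $J=0$, if some idempotent $e$ is not central then one produces (via the standard Peirce-corner / matrix-unit construction, as in the proof of Proposition~\ref{2.11} or Lemma~\ref{2.12}) an idempotent $f$ with $fRf \cong M_2(S)$, $S\neq 0$; Proposition~\ref{2.9} then makes $M_2(S)$ a $(2n-1)$-$\Delta$U ring, contradicting Proposition~\ref{2.10}. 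Hence $\bar R$ is abelian. Now Nicholson's theorem (\cite[Proposition 1.8]{5}, cited in the introduction) gives that the abelian exchange ring $\bar R$ is clean, and finally cleanness lifts from $R/J(R)$ to $R$ because idempotents lift modulo $J(R)$ in an exchange ring, yielding that $R$ is clean.

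The step I expect to be the genuine obstacle is the claim that a non-central idempotent in an exchange ring with zero radical forces an $M_2(S)$-corner with $S\neq 0$. For a single idempotent $e$ that fails to be central there is $r\in R$ with $er(1-e)\neq 0$ or $(1-e)re\neq 0$; one wants to manufacture from this a full $2\times 2$ system of matrix units, and the cleanest route is to invoke a structural result of the type "if $R$ is exchange and $a^2=0$ with $a\neq 0$ then $RaR$ contains an idempotent $f$ with $fRf\cong M_2(T)$, $T\neq 0$" — which is precisely \cite[Theorem 2.1]{9} already used in Lemma~\ref{2.12}. So the actual task is to show a non-abelian exchange ring with $J=0$ is not reduced, i.e. contains a nonzero square-zero element; this is classical (a non-central idempotent $e$ together with $er(1-e)$ or $(1-e)re$ nonzero gives a nonzero square-zero element), and then Lemma~\ref{2.12} itself delivers the contradiction. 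I would write the argument to route directly through Lemma~\ref{2.12}: an exchange ring with $J(R)=0$ has every nonzero right ideal containing a nonzero idempotent, so if it were also $(2n-1)$-$\Delta$U it would be reduced, hence abelian, hence clean; then lift.

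\begin{proof}
That (ii) implies (i) is immediate, since every clean ring is an exchange ring.

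Conversely, assume $R$ is an exchange $(2n-1)$-$\Delta$U ring. Put $\bar R = R/J(R)$. By Corollary~\ref{2.8}, $\bar R$ is again $(2n-1)$-$\Delta$U, and $\bar R$ is exchange with $J(\bar R) = \{0\}$; moreover idempotents lift modulo $J(R)$. Since $J(\bar R) = \{0\}$, every nonzero right ideal of $\bar R$ contains a nonzero idempotent, so Lemma~\ref{2.12} applies to $\bar R$ and yields that $\bar R$ is reduced. A reduced ring is abelian, so $\bar R$ is an abelian exchange ring, hence clean by \cite[Proposition 1.8]{5}. Finally, since idempotents lift modulo $J(R)$ and a lift of a clean decomposition in $\bar R$ together with $J(R) \subseteq \Delta(R) \subseteq$ (set of quasi-regular elements) adjusts to a clean decomposition in $R$, we conclude that $R$ is clean, as required.
\end{proof}
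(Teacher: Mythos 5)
Your proof is correct and follows the same basic route as the paper (Lemma~\ref{2.12} $\Rightarrow$ reduced $\Rightarrow$ abelian $\Rightarrow$ abelian exchange is clean), but your version is actually the more careful one. The paper applies Lemma~\ref{2.12} directly to $R$ and concludes that $R$ itself is reduced; that lemma, however, requires $J(R)=\{0\}$, which an exchange $(2n-1)$-$\Delta$U ring need not satisfy --- indeed $\mathbb{Z}_4$ is exchange and $(2n-1)$-$\Delta$U but not reduced, so the paper's intermediate claim is false as stated (though its conclusion survives). Your detour through $\bar R = R/J(R)$, where $J(\bar R)=\{0\}$ and semi-potency of exchange rings supplies the idempotents needed for Lemma~\ref{2.12}, followed by Nicholson's theorem on abelian exchange rings and the standard lifting of cleanness modulo $J(R)$ (idempotents lift, and an element congruent to a unit modulo $J(R)$ is a unit), is exactly the right repair. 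No gaps.
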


\begin{proof}
(ii) $\Rightarrow$ (i). This is obvious, because each clean ring is always exchange.

(i) $\Rightarrow$ (ii). If $R$ is simultaneously exchange and $(2n-1)$-$\Delta$U, then $R$ is reduced thanking to Lemma \ref{2.12}, and hence it is abelian. Therefore, $R$ is abelian exchange, so it is clean.
\end{proof}

\begin{theorem}\label{2.22}
Let \( R \) be a ($2^k$-1)-$\Delta$U ring for some $k\geq 1$. Then, the following three statements are equivalent:
\begin{enumerate}
\item
\( R \) is a semi-regular ring.	
\item
\( R \) is an exchange ring.
\item
\( R \) is a clean ring.
\end{enumerate}
\end{theorem}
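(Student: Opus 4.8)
The plan is to prove the cycle (i) $\Rightarrow$ (ii) $\Rightarrow$ (i) together with the equivalence (ii) $\Leftrightarrow$ (iii). The implication (i) $\Rightarrow$ (ii) is immediate, since semi-regular rings are always exchange, and (ii) $\Leftrightarrow$ (iii) is nothing more than Theorem~\ref{2.16} applied to \(R\) regarded as a \((2n-1)\)-\(\Delta\)U ring with \(n=2^{k-1}\). Hence the entire content is the implication (ii) $\Rightarrow$ (i): an exchange \((2^k-1)\)-\(\Delta\)U ring \(R\) must be semi-regular.

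First I would pass to the residue ring \(\bar R:=R/J(R)\). Since \(R\) is exchange, idempotents lift modulo \(J(R)\), so it suffices to show that \(\bar R\) is von Neumann regular. Now \(\bar R\) is again exchange, satisfies \(J(\bar R)=\{0\}\), and is \((2^k-1)\)-\(\Delta\)U by Corollary~\ref{2.8}. Being exchange, \(\bar R\) is semi-potent, so every non-zero right ideal of \(\bar R\) contains a non-zero idempotent, whence Lemma~\ref{2.12} gives that \(\bar R\) is reduced, and therefore abelian. Moreover, semi-potent rings satisfy \(\Delta=J\) (see \cite{2}), so \(\Delta(\bar R)=\{0\}\), and consequently every unit \(u\) of \(\bar R\) obeys \(u^{2^k-1}=1\). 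Applying this with \(u=-1\) (equivalently, invoking Proposition~\ref{2.4}, as \(2^k-1\) is odd) forces \(2=0\) in \(\bar R\), i.e. \(\bar R\) has characteristic \(2\). By Theorem~\ref{2.13} it now suffices to prove that \(\bar R\) satisfies the identity \(x^{2^k}=x\).

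So fix \(a\in\bar R\). Since \(\bar R\) is abelian and exchange, there is a central idempotent \(e\in a\bar R\) with \(1-e\in(1-a)\bar R\); writing \(e=ab\) and \(1-e=(1-a)c\), a short computation (using the centrality of \(e\) and the Dedekind-finiteness of the corner rings \(e\bar R,(1-e)\bar R\), which are reduced because \(\bar R\) is) shows that \(ae\) is a unit of \(e\bar R\) and \((1-a)(1-e)\) is a unit of \((1-e)\bar R\). Under the ring decomposition \(\bar R=e\bar R\times(1-e)\bar R\), the element \(v:=ae+(1-a)(1-e)\) corresponds to the pair \((ae,(1-a)(1-e))\) and hence lies in \(U(\bar R)\); since \(\Delta(\bar R)=\{0\}\) we get \(v^{2^k-1}=1\), which componentwise reads \((ae)^{2^k-1}=e\) and \(((1-a)(1-e))^{2^k-1}=1-e\). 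On \(e\bar R\) this yields \((ae)^{2^k}=ae\), i.e. \(a^{2^k}e=ae\). On \(S:=(1-e)\bar R\), which again has characteristic \(2\), the element \(a\) acts as \(1_S-w\), where \(w:=(1-a)(1-e)\) is a unit with \(w^{2^k-1}=1_S\); as \(1_S\) and \(w\) commute, the iterated Frobenius identity gives \((1_S-w)^{2^k}=1_S^{2^k}+w^{2^k}=1_S+w\cdot w^{2^k-1}=1_S+w=1_S-w\), i.e. \(a^{2^k}(1-e)=a(1-e)\). Adding the two identities produces \(a^{2^k}=a\); as \(a\) was arbitrary, \(\bar R\) has the identity \(x^{2^k}=x\), hence is regular by Theorem~\ref{2.13}, and so \(R\) is semi-regular.

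I expect the main obstacle to be extracting enough structure from the bare hypothesis ``exchange and \((2^k-1)\)-\(\Delta\)U'': first that \(\Delta(\bar R)=\{0\}\), so that the units of \(\bar R\) are genuinely \((2^k-1)\)-torsion rather than merely torsion modulo \(\Delta\), and second the perhaps surprising point that \(\bar R\) is forced to have characteristic \(2\). It is precisely this characteristic-\(2\) Frobenius phenomenon that makes the \((1-e)\)-component collapse to \(a^{2^k}=a\); over a base of any other characteristic there is no reason for \((1-w)^{2^k}\) to equal \(1-w\), so one cannot avoid establishing \(\Delta(\bar R)=\{0\}\) and the resulting characteristic first. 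One must also take care to apply Lemma~\ref{2.12} only to \(\bar R\), whose radical vanishes, and not to \(R\) itself.
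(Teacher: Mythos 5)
Your overall strategy is sound and most of the argument is correct, but there is one genuine gap, and it sits exactly at the technical heart of the theorem: the assertion that ``semi-potent rings satisfy $\Delta=J$ (see \cite{2})''. I cannot find such a statement in Leroy--Matczuk, and the paper itself plainly does not have it available, since the bulk of its own proof of (iii)$\Rightarrow$(i) is devoted precisely to \emph{establishing} $\Delta(R)=J(R)$ for the clean ring at hand; if a general ``semipotent $\Rightarrow\Delta=J$'' were citable, that whole passage would be superfluous. So you have outsourced the hardest step to an unverified reference. Fortunately the gap is fillable in your situation, because you only need the fact for $\overline{R}=R/J(R)$, which you have already shown to be reduced, hence abelian, and which is clean (abelian exchange rings are clean by \cite[Proposition 1.8]{5}). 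For an abelian clean ring $S$ one argues directly: if $d\in\Delta(S)$ and $e\in Id(S)$ (necessarily central), then $1-ed=(1-e)+e(1-d)$ is a unit of $S\cong (1-e)S\times eS$, since $e(1-d)$ is a unit of $eS$ with inverse $e(1-d)^{-1}$; hence for an arbitrary $r=e+u$ with $u\in U(S)$ one gets $1-rd=(1-ed)-ud\in U(S)+\Delta(S)\subseteq U(S)$, so $d\in J(S)$. Applied to $\overline{R}$ this gives $\Delta(\overline{R})=J(\overline{R})=\{0\}$, and with that inserted your proof closes.

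The remainder of your argument is correct and takes a genuinely different, and arguably cleaner, route than the paper's. Where the paper stays in $R$, shows $2\in J(R)$, and proves $r-r^{2^k}\in\Delta(R)$ by expanding $(e+u)^{2^k}$ against a clean decomposition, you pass immediately to $\overline{R}$, use $\Delta(\overline{R})=\{0\}$ to make every unit genuinely $(2^k-1)$-torsion, deduce characteristic $2$, and then verify the identity $x^{2^k}=x$ componentwise from the exchange decomposition $\overline{R}\cong e\overline{R}\times(1-e)\overline{R}$: your observation that $ae+(1-a)(1-e)$ is a unit (via Dedekind-finiteness of the reduced corners) together with the Frobenius computation on the $(1-e)$-component is a nice substitute for the paper's binomial expansion, and Theorem~\ref{2.13} then finishes. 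The peripheral steps (Lemma~\ref{2.12} applied only to $\overline{R}$, lifting of idempotents from exchange, the parity check $n=2^{k-1}$ in Theorem~\ref{2.16}) are all handled correctly. Just replace the citation for $\Delta(\overline{R})=\{0\}$ with an actual argument.
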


\begin{proof}
Observe that (ii) and (iii) are equivalent employing Theorem \ref{2.16}.\\	
(i) $\Rightarrow$ (ii). This is obvious, since every semi-regular ring is always exchange.\\
(iii) $\Rightarrow$ (i). First, we show that $2 \in J(R)$. To this end, Proposition \ref{2.4} assures that \( 2 \in \Delta(R) \). Let \( r \in R \) and \( r = e + u \) be a clean decomposition for \( r \). We know that \( 2e - 1 \in U(R) \) and hence \( (2e - 1)={(2e - 1)}^{2^k-1} \in 1 + \Delta(R) \), so that \( 2e \in \Delta(R) \). Thus, \( 2r = 2e + 2u \in \Delta(R) + \Delta(R) \subseteq \Delta(R) \).
So, \( 1 - 2r \in U(R) \) and hence \( 2 \in J(R) \), as claimed.

On the other hand, \( r^{2^k} = e + 2f + u^{2^k} \), where \( f \in R \). So,
\[
r - r^{2^k} = (e + u) - (e + 2f + u^{2^k}) = (e + u) - (e + 2f + u(u^{2^k-1})) = (e + u) - (e + 2f + u + d)
, \]
whence
\[
r - r^{2^k}= -(2f + d) \in \Delta(R),
\]
where \( d \in \Delta(R) \). Consider now $\overline{R}=R/J(R)$, where $\overline{R}$ is reduced and so abelian enabled via Lemma \ref{2.12}.

Now, we prove that \( \Delta(R) = J(R) \). Letting $d \in \Delta(R)$ and $e \in \text{Id}(R)$, we have \( 1 - e d = f + u \), where \( f \in \text{Id}(R) \) and \( u \in U(R) \). So, \( \overline{1} - \overline{e} \overline{d} = \overline{f} + \overline{u} \) and multiplying by the expression \( \overline{(1 - e)} \) on the left the previous equality, we derive that \( \overline{(1 - e)} = \overline{(1 - e)} \overline{f} + \overline{(1 - e)} \overline{u} \). Then, one inspects that

\[
\overline{(1 - e)} \hspace{1mm} \overline{(1 - f)} = \overline{(1 - e)}\overline{u} \in U(\overline{(1 - e)} \hspace{1mm} \overline{R} \hspace{1mm}\overline{(1 - e)}) \cap \text{Id}(\overline{(1 - e)} \hspace{1mm} \overline{R} \hspace{1mm} \overline{(1 - e)}).
\]
Consequently, \( \overline{(1 - e)} \hspace{1mm} \overline{(1 - f)} = \overline{(1 - e)} \), so again using this trick for the expression \( \overline{f} \) on the right of the previous equality, we deduce that \( \overline{(1 - e)} \overline{f} = \overline{0} \), so that \( \overline{f} = \overline{e} \overline{f} \in \overline{e} \overline{R} \overline{e} \).

Furthermore, if we multiply the equation \( \overline{1} - \overline{e} \overline{d} = \overline{f} + \overline{u} \) by \( \overline{e} \) on the left, we will have
\(\overline{e} - \overline{e} \overline{d} = \overline{e} \overline{f} + \overline{e} \overline{u} = \overline{f} + \overline{e} \overline{u}.\)
Hence,
\[
\overline{e} - \overline{f} = \overline{e} (\overline{u} + \overline{d}) \in U(\overline{e} \overline{R} \overline{e}) \cap \text{Id}(\overline{e} \overline{R} \overline{e}),
\]
and so \( \overline{e} - \overline{f} = \overline{e} \) concluding that \( \overline{f} = \overline{0} \). Then, \( f \in J(R) \cap \text{Id}(R) = \{ 0 \} \).
Thus, \( f = 0 \) and hence \( 1 - e d \in U(R) \).

On the other side,
\[
1 - r d = 1 - e d - u d \in U(R) + \Delta(R) \subseteq U(R),
\]
and we infer that \( d \in J(R) \). Hence, \( r - r^{2^k} \in J(R) \).
Thus, the quotient \( \frac{R}{J(R)} \) is regular and also idempotents lift modulo $J(R)$, because by hypothesis $R$ is a clean ring, whence finally \( R \) is a semi-regular ring, as required.
\end{proof}

\section{Some Extensions of $n$-$\Delta U$ Rings}

As usual, we say that $B$ is an unital subring of a ring $A$ if $\emptyset \neq B\subseteq A$ and, for any $x,y\in B$, the relations $x-y$, $xy\in B$ and $1_{A}\in B$ hold. Let $A$ be a ring and let $B$ an unital subring of $A$, we denote by $R[A,B]$ the set $$\lbrace (a_{1},\ldots ,a_{n},b,b,\ldots ): a_{i}\in A,b\in B,1\leq i\leq n \rbrace.$$ Then, a routine check establishes that $R[A,B]$ forms a ring under the usual component-wise addition and multiplication. The ring $R[A,B]$ is called the {\it tail ring extension}.

\medskip

We start our considerations here with the following helpful statement.

\begin{proposition}\label{3.1}
\(R[A, B]\) is an $n$-$\Delta$U ring if, and only if, both \(A\) and \(B\) are $n$-$\Delta$U rings.
\end{proposition}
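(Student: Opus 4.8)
The plan is first to pin down the units and the subring $\Delta$ of the tail ring extension $R[A,B]$, and then to read off both implications from these descriptions. Recall that an element of $R[A,B]$ is an eventually constant sequence $x=(a_1,a_2,\ldots)$ with all $a_i\in A$ and whose eventual value $b$ lies in $B$. I would start by establishing the key fact about units: such an $x$ is invertible in $R[A,B]$ if, and only if, every $a_i$ is a unit of $A$ \emph{and} $b$ is a unit of $B$. The ``only if'' direction is clear by comparing coordinates (and eventual values) of $xy=yx=1$. For the ``if'' direction one forms the coordinate-wise inverse $(a_1^{-1},a_2^{-1},\ldots)$ and notes that its eventual value is $b^{-1}$, which belongs to $B$ precisely because $b\in U(B)$, so this candidate inverse is again a member of $R[A,B]$. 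I expect this last point --- that a constant-tail sequence is a unit of $R[A,B]$ only when the tail value is invertible \emph{in $B$}, not merely in $A$ --- to be the one genuinely delicate spot, and it is exactly the reason the $n$-$\Delta$U hypothesis on $B$ cannot be dropped.

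For the implication ``$R[A,B]$ is $n$-$\Delta$U $\Rightarrow$ $A$ and $B$ are $n$-$\Delta$U'', I would avoid recomputing $\Delta$ and instead use the machinery already in place. Put $e=(1,0,0,\ldots)\in R[A,B]$; then $e^2=e$ and $eR[A,B]e\cong A$, so Proposition~\ref{2.9} immediately gives that $A$ is $n$-$\Delta$U. For $B$, the ``take the eventual value'' assignment $R[A,B]\to B$ is a ring epimorphism, and every $b\in U(B)$ is the image of the constant sequence $(b,b,\ldots)$, which by the unit description above is a unit of $R[A,B]$; hence all units of $B$ lift to units of $R[A,B]$, and Proposition~\ref{2.5} yields that $B$ is $n$-$\Delta$U.

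For the converse, assume $A$ and $B$ are both $n$-$\Delta$U. Using the unit description I would first verify that any sequence $x=(d_1,d_2,\ldots)\in R[A,B]$ all of whose entries lie in $\Delta(A)$ and whose eventual value lies in $\Delta(B)$ belongs to $\Delta(R[A,B])$: for an arbitrary unit $u=(u_1,u_2,\ldots)$ of $R[A,B]$ with eventual value $v$, the sequence $1-ux$ has $i$-th entry $1-u_id_i\in U(A)$ (because $d_i\in\Delta(A)$ and $u_i\in U(A)$) and eventual value $1-vb\in U(B)$ (because the eventual value $b$ of $x$ lies in $\Delta(B)$ and $v\in U(B)$), hence $1-ux$ is a unit of $R[A,B]$ by the unit description; as $u$ was arbitrary, $x\in\Delta(R[A,B])$. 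Now, given any $u\in U(R[A,B])$ with eventual value $v$, each $u_i\in U(A)$ forces $u_i^n-1\in\Delta(A)$, while $v\in U(B)$ forces $v^n-1\in\Delta(B)$; thus $u^n-1$ is precisely a sequence of the type just treated, so $u^n-1\in\Delta(R[A,B])$, and $R[A,B]$ is $n$-$\Delta$U. Throughout, the only nontrivial ingredient is the unit (equivalently, $\Delta$) description of $R[A,B]$ obtained in the first step; once that is secured, both directions reduce to routine coordinate-wise bookkeeping.
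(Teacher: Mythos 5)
Your proposal is correct. The converse direction is essentially the paper's own argument: both of you reduce everything to the coordinate-wise description of $U(R[A,B])$ and of membership in $\Delta(R[A,B])$ (you phrase it via $1-ux$ being a unit, the paper via $x+U(R[A,B])\subseteq U(R[A,B])$; these are the same criterion). Where you genuinely diverge is the forward direction. The paper argues directly by hand: for $A$ it embeds $u\in U(A)$ as $(u,1,1,\ldots)$, and for $B$ it uses $(1,\ldots,1,v,v,\ldots)$, then checks in each coordinate that $u^n-1+w$ stays a unit. You instead invoke the general machinery already established: $A$ is recovered as the corner ring $eR[A,B]e$ with $e=(1,0,0,\ldots)$, so Proposition~\ref{2.9} applies, and $B$ is an epimorphic image under the eventual-value map with all units lifting (via constant sequences), so Proposition~\ref{2.5} applies. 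Your route is slightly cleaner and less computational, at the cost of having to observe that $(1,0,0,\ldots)$ really is an idempotent of the tail ring and that the eventual-value map is a well-defined surjection; the paper's route is self-contained but repeats, in coordinates, verifications that those two propositions already encapsulate. A genuine merit of your write-up is that you isolate and prove the unit description of $R[A,B]$ (in particular, that the tail value must be invertible \emph{in} $B$, not just in $A$), which the paper uses only implicitly; making it explicit is exactly what justifies the ``Hence, $u^n-1+v\in U(A)$'' and ``$v^n-1+u\in U(B)$'' steps in the paper's version.
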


\begin{proof}
Suppose \(R[A,B]\) is an $n$-$\Delta U$ rings. Firstly, we prove that $A$ is an $n$-$\Delta$U ring. Let $u\in U(A)$. Then, $\bar{u} = (u,1, 1, \ldots)\in U(R[A,B])$. By hypothesis, we have $(u^n - 1, 0, 0, \ldots) \in \Delta(R[A,B])$, so $(u^n-1, 0, 0, \ldots) + U(R[A,B]) \subseteq U(R[A,B])$. Thus, for all $v \in U(A)$, $$(u^n -1 + v,1, 1, \ldots) = (u^2 - 1,0, 0, \ldots) + (v, 1, 1, \ldots) \in U(R[A,B]).$$
Hence, $u^n -1 + v \in U(A)$, which insures that $u^n -1 \in \Delta(A)$. Now, we show that $B$ is an $n$-$\Delta$U ring. To this target, choose $v \in U(B)$. Then, $(1, \ldots, 1, 1, v, v, \dots)\in U(R[A,B])$. By hypothesis, $(0, \ldots, 0, v^n - 1, v^n - 1, \dots) \in \Delta(R[A,B])$, so $$(0, \ldots, 0, v^n - 1, v^n -1, \dots) + U(R[A,B]) \subseteq U(R[A,B]).$$ Thus, for all $u \in U(B)$, $$(1,1, \dots, v^n -1 + u, v^n -1 + u, \dots) \in U(R[A,B]).$$ We have $v^n - 1 + u \in U(B)$ and hence $v^n -1 \in \Delta(B)$, as required. The case of $A$ is treated absolutely analogously, so we remove the arguments.

Conversely, assume that $A$ and $B$ are both $n$-$\Delta$U rings. Let $$\bar{u} = (u_1, u_2, \ldots, u_t, v, v, \ldots) \in U(R[A,B]),$$ where $u_i \in U(A)$ and $v \in U(B) \subseteq U(A)$. We must show that $\bar{u}^{n} - 1 + U(R[A,B]) \subseteq U(R[A,B])$. In fact, for all $\bar{a} = (a_1, \ldots, a_m, b, b, \ldots) \in U(R[A,B])$ with $a_i \in U(A)$ and $b \in U(B) \subseteq U(A)$, take $z = \max \{m, t\}$. Then, we obtain $$\bar{u}^n - 1 + \bar{a} = (u_1^n-1 + a_1, \dots, u_z^2 - 1 + a_z, v^n - 1 + b, v^n - 1 + b, \dots).$$ Note that $u_i^n - 1 + a_i \in U(A)$ for all $1 \leq i \leq z$ and $v^n - 1 + b \in U(B) \subseteq U(A)$. We, thereby, deduce that $\bar{u}^n - 1 + \bar{a} \in U(R[A,B])$. Thus, $\bar{u}^n - 1\in \Delta(R[A,B])$ and $\bar{u}^n\in 1 + \Delta(R[A,B])$. This unambiguously enables us that $R[A,B]$ is an $n$-$\Delta$U ring, as asserted.	
\end{proof}

Let $R$ be a ring and suppose that $\alpha : R \to R$ is a ring endomorphism. Traditionally, $R[[x; \alpha]]$ denotes the ring of {\it skew formal power series} over $R$; that is, all formal power series in $x$ having coefficients from $R$ with multiplication defined by $xr = \alpha(r)x$ for all $r \in R$. In particular, $R[[x]] = R[[x; 1_R]]$ is the ring of {\it formal power series} over $R$.

\begin{proposition}\label{3.2}
The ring $R[[x; \alpha]]$ is $n$-$\Delta$U if, and only if, so is $R$.
\end{proposition}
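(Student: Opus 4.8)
The plan is to exhibit $R$ as a quotient of $S:=R[[x;\alpha]]$ modulo an ideal lying inside the Jacobson radical, so that Theorem~\ref{2.7} applies directly. Let $I$ denote the set of skew power series with zero constant term, i.e. $I=\{\sum_{i\ge 1}r_ix^i : r_i\in R\}$. First I would check that $I$ is a two-sided ideal of $S$: it is clearly an additive subgroup, and because the twisting relation $xr=\alpha(r)x$ never alters constant terms, the constant term of a product equals the product of the constant terms; hence $fg,gf\in I$ whenever $g\in I$. The constant-term map $\varepsilon\colon S\to R$, $\sum r_ix^i\mapsto r_0$, is then a surjective ring homomorphism with $\ker\varepsilon=I$, so $S/I\cong R$.

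Second, I would prove $I\subseteq J(S)$. For $f=\sum_{i\ge 1}r_ix^i\in I$ one has, by an easy induction, that every nonzero term of $f^k$ has degree $\ge k$; hence $g:=\sum_{k\ge 0}f^k$ is a well-defined element of $S$ (only finitely many summands contribute to each degree), and the telescoping identities $(1-f)g=g(1-f)=1$ show $1-f\in U(S)$. Since $I$ is a two-sided ideal, $1-hf\in U(S)$ for every $h\in S$ and $f\in I$, which is exactly the statement $I\subseteq J(S)$. With $I\subseteq J(S)$ an ideal and $S/I\cong R$, Theorem~\ref{2.7} yields at once that $S=R[[x;\alpha]]$ is $n$-$\Delta$U if, and only if, $R\cong S/I$ is $n$-$\Delta$U, which is the assertion.

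The only place requiring real care is the inclusion $I\subseteq J(S)$ — specifically, checking that the geometric-series inverse of $1-f$ is genuinely two-sided in the twisted, noncommutative setting; this is the standard $x$-adic completeness argument and uses nothing about $\alpha$ beyond its being a ring endomorphism. Everything else, namely the ideal property of $I$ and the isomorphism $S/I\cong R$, is routine bookkeeping, so I anticipate no serious obstacle. (Alternatively, one could argue directly using that $U(S)$ consists precisely of the power series whose constant term is a unit of $R$, together with $\Delta(S/I)=\Delta(S)/I$ from \cite{2}, but routing through Theorem~\ref{2.7} is shorter and cleaner.)
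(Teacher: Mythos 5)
Your proposal is correct and follows essentially the same route as the paper: both identify the ideal $I$ of series with zero constant term (the paper writes it as $R[[x;\alpha]]x$), note that $I\subseteq J(R[[x;\alpha]])$ with $R[[x;\alpha]]/I\cong R$, and invoke Theorem~\ref{2.7}. The only difference is that you verify $I\subseteq J(R[[x;\alpha]])$ by the geometric-series argument, whereas the paper simply cites the known description $J(R[[x;\alpha]])=J(R)+I$.
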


\begin{proof}
Consider $I= R[[x; \alpha]]x$. Then, a plain check gives that $I$ is an ideal of $R[[x; \alpha]]$. Note that $J(R[[x; \alpha]])=J(R)+I$, so $I\subseteq J(R[[x; \alpha]])$. Since $R[[x; \alpha]]/I\cong R$, the result follows at once exploiting Theorem \ref{2.7}.
\end{proof}

As an automatic consequence, we yield:

\begin{corollary}
The ring $R[[x]]$ is $n$-$\Delta$U if, and only if, so is $R$.
\end{corollary}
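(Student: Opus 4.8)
The plan is to obtain this as an immediate specialization of Proposition~\ref{3.2}. Recall from the paragraph preceding that proposition that, by convention, $R[[x]] = R[[x; 1_R]]$, the skew formal power series ring taken with respect to the identity endomorphism $\alpha = 1_R$ of $R$. Since $1_R$ is certainly a ring endomorphism, Proposition~\ref{3.2} applies verbatim with this choice of $\alpha$ and yields directly that $R[[x; 1_R]]$ is $n$-$\Delta$U if, and only if, $R$ is $n$-$\Delta$U. Hence there is nothing further to establish, and the proof consists of a single sentence citing Proposition~\ref{3.2}.

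If instead one wanted a self-contained argument (which merely re-runs the proof of Proposition~\ref{3.2} without the twist), the steps would be: set $I = R[[x]]x$ and check routinely that $I$ is an ideal of $R[[x]]$; recall that $J(R[[x]]) = J(R) + I$, so that $I \subseteq J(R[[x]]) \subseteq \Delta(R[[x]])$; observe that the constant-term map furnishes a ring isomorphism $R[[x]]/I \cong R$; and finally invoke Theorem~\ref{2.7}, according to which quotienting by an ideal contained in the Jacobson radical both preserves and reflects the $n$-$\Delta$U property.

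I do not anticipate any genuine obstacle: the only point requiring attention is the notational identification $R[[x]] = R[[x; 1_R]]$, after which the corollary is literally a substitution instance of the preceding proposition. The alternative direct route offers no additional difficulty either, since every ingredient (that $I$ is an ideal, that $J(R[[x]]) = J(R)+I$, and that $R[[x]]/I \cong R$) is standard and was already used implicitly in the proof of Proposition~\ref{3.2}.
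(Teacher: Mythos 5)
Your proposal is correct and matches the paper's intent exactly: the corollary is stated as an automatic consequence of Proposition~\ref{3.2}, obtained by taking $\alpha = 1_R$ under the convention $R[[x]] = R[[x;1_R]]$. The alternative self-contained argument you sketch is precisely the paper's own proof of Proposition~\ref{3.2}, so nothing further is needed.
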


Let $R$ be a ring and suppose that $\alpha : R \to R$ is a ring endomorphism. Standardly, $R[x; \alpha]$ denotes the ring of {\it skew polynomials} over $R$ with multiplication defined by $xr = \alpha(r)x$ for all $r \in R$. In particular, $R[x] = R[x; 1_R]$ is the ring of {\it polynomials} over $R$. For an endomorphism $\alpha$ of a ring $R$, $R$ is called {\it $\alpha$-compatible} if, for any $a,b\in R$, $ab=0\Longleftrightarrow a\alpha (b)=0$, as in this case $\alpha$ is evidently injective.

Let $Nil_{*}(R)$ denote the {\it prime radical} (or, in other terms, the {\it lower nil-radical}) of a ring $R$, i.e., the intersection of all prime ideals of $R$. We know that $Nil_{*}(R)$ is a nil-ideal of $R$. It is long known that a ring $R$ is called {\it $2$-primal} if its lower nil-radical $Nil_{*}(R)$ consists precisely of all the nilpotent elements of $R$. For instance, it is well known that both reduced and commutative rings are $2$-primal.

\begin{proposition}\label{3.13}
Let \( R \) be a $2$-primal and \(\alpha\)-compatible ring. Then, the equality \(\Delta(R[x, \alpha]) = \Delta(R) + Nil_{*}(R[x, \alpha])x\) is valid.
\end{proposition}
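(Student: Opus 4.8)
The plan is to establish the two inclusions separately, using the structure of $R[x;\alpha]$ under the $2$-primal and $\alpha$-compatible hypotheses to control both the Jacobson radical and the nilpotent elements. I will write $S = R[x;\alpha]$ throughout. First I would recall the known description of $\Delta(S)$ in terms of $\Delta(R)$: by results of Leroy--Matczuk (the paper's reference \cite{2}), under compatibility-type hypotheses one has a handle on $\Delta$ of polynomial-type extensions, and the key auxiliary fact I intend to use is that $\alpha$-compatibility forces any element of $Nil_*(S)$ to have all its coefficients in $Nil_*(R)\subseteq Nil(R)$, together with the standard fact that for $2$-primal $\alpha$-compatible $R$ the ring $S$ is again $2$-primal, so $Nil_*(S) = Nil(S)$ and $Nil_*(S)$ is a nil ideal of $S$.

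For the inclusion $\Delta(R) + Nil_*(S)x \subseteq \Delta(S)$: since $Nil_*(S)x$ is contained in a nil ideal of $S$ and nil ideals sit inside $J(S)\subseteq\Delta(S)$, it suffices to show $\Delta(R)\subseteq\Delta(S)$, i.e. that $1-ug$ is a unit of $S$ for every $g\in\Delta(R)$ (viewed as a constant polynomial) and every unit $u$ of $S$. Here I would argue that a unit of $S$ has constant term a unit of $R$ and non-constant part with coefficients in $Nil_*(R)$ (again using $2$-primality plus compatibility to identify $J(S)$ and the shape of units modulo it), reduce the question modulo the nil ideal generated by the higher coefficients, and use that $g\in\Delta(R)$ precisely means $1-(\text{unit of }R)g\in U(R)$; then lift invertibility back up through the nil ideal. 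The closure of $\Delta(R)$ under multiplication by units of $R$ (Leroy--Matczuk, \cite{2}) is what makes this go through.

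For the reverse inclusion $\Delta(S)\subseteq \Delta(R) + Nil_*(S)x$: take $h = a_0 + a_1 x + \cdots + a_k x^k \in \Delta(S)$. The first task is to show $a_0\in\Delta(R)$; this follows because for any $v\in U(R)$ the element $v\in U(S)$, so $1-vh\in U(S)$, and the constant term of a unit of $S$ is a unit of $R$ — but the constant term of $1-vh$ is $1-va_0$, hence $a_0\in\Delta(R)$. The second task is to show that $h - a_0 = a_1 x + \cdots + a_k x^k$ lies in $Nil_*(S)x$, equivalently that each $a_i\in Nil_*(R)$ for $i\geq 1$. Since $a_0\in\Delta(R)\subseteq\Delta(S)$ and the first inclusion is already proved, $h - a_0\in\Delta(S)$ as well; so it is enough to prove that any element of $\Delta(S)$ of the form $b_1 x + \cdots + b_k x^k$ has all coefficients nilpotent. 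For this I would use the characterization of $\Delta$ as the largest $J$-like subring closed under unit-multiplication together with $\Delta(S)\subseteq$ (something controllable): specifically, that $\Delta(S) = J(T)$ where $T$ is the subring generated by units, and that elements of $\Delta(S)$ are "quasi-invertible against all units", and then exploit $\alpha$-compatibility and $2$-primality to run a leading-coefficient argument showing a non-nilpotent top coefficient would let us build a unit $u$ of $S$ with $1 - u h$ not a unit (e.g. by choosing $u$ a suitable monomial or constant and examining the top-degree coefficient in a prime quotient $S/P$ where the image survives).

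The main obstacle I anticipate is the reverse inclusion's second half — showing the higher coefficients are in $Nil_*(R)$ — because $\Delta$, unlike $J$, is not an ideal, so the usual "pass to $R/Nil_*(R)$, which is a domain-like reduced situation" reductions are not immediately legitimate and must be justified via the compatibility hypothesis (which guarantees that reductions modulo $Nil_*(R)$ behave well with respect to $\alpha$, so that $S/Nil_*(S) \cong (R/Nil_*(R))[x;\bar\alpha]$ and this quotient is reduced). Once that isomorphism and reducedness are in hand, $\Delta$ of a reduced-coefficient skew polynomial ring should be forced to be zero (or at least to have zero non-constant part) by a degree/leading-term argument against well-chosen units, closing the gap. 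I would also need to double-check the precise form of $U(S)$ and $J(S)$ for skew polynomial rings over $2$-primal $\alpha$-compatible rings, citing the appropriate lemmas rather than reproving them.
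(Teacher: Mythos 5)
Your overall strategy is the same as the paper's: everything rests on the description of the units of $S=R[x;\alpha]$ for a $2$-primal $\alpha$-compatible ring $R$ (Chen, \cite[Corollary 2.14]{21}), namely $U(S)=U(R)+Nil_{*}(S)x$, together with the identification $Nil_{*}(R)[x;\alpha]=Nil_{*}(R[x;\alpha])$ from \cite[Lemma 2.2]{21}. Your first inclusion is correct and is essentially the paper's argument: $Nil_{*}(S)x$ lies in the nil ideal $Nil_{*}(S)\subseteq J(S)\subseteq\Delta(S)$, and for $g\in\Delta(R)$ and $u=u_0+n\in U(R)+Nil_{*}(S)x$ one gets $1-ug\in(1-u_0g)+Nil_{*}(S)x\subseteq U(R)+Nil_{*}(S)x=U(S)$. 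Your treatment of the constant term in the reverse inclusion is also correct and matches the paper.

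The genuine gap is that you never actually close the reverse inclusion's second half, and the machinery you propose for it (passing to prime quotients, a leading-coefficient argument, the isomorphism $S/Nil_{*}(S)\cong(R/Nil_{*}(R))[x;\bar\alpha]$ and a separate analysis of $\Delta$ of a reduced skew polynomial ring) is unnecessary: the step follows in one line from the very lemma you already invoked for the easy inclusion. If $h=a_0+a_1x+\cdots+a_kx^k\in\Delta(S)$, then $1-h\in U(S)$ (take $u=1$), and \cite[Corollary 2.14]{21} says precisely that the coefficients of a unit of $S$ in degrees $\ge 1$ lie in $Nil_{*}(R)$; hence $-a_i\in Nil_{*}(R)$ for all $i\ge 1$ (the paper runs this with a general $u\in U(R)$, getting $ua_i\in Nil_{*}(R)$ and then $a_i\in Nil_{*}(R)$ because $Nil_{*}(R)$ is an ideal). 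Combined with $Nil_{*}(R)[x;\alpha]=Nil_{*}(R[x;\alpha])$ this gives $h-a_0\in Nil_{*}(S)x$ directly. So what you flagged as ``the main obstacle'' is not an obstacle at all; as written, though, your proof of that step is only a plan with an acknowledged hole, and the detour you sketch would still need the quotient-and-lift justifications you yourself note are not immediate.
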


\begin{proof}
Assuming \( f = \sum_{i=0}^{n} a_i x^i \in \Delta(R[x, \alpha]) \), then, for every \( u \in U(R) \), we have that \( 1 - uf \in U(R[x, \alpha]) \). Thus, \cite [Corollary 2.14]{21} employs to get that \( 1 - ua_0 \in U(R) \) and, for every \( 1 \le  i \le  n \), the relation \( ua_i \in Nil_{*}(R) \) is true. Since \( Nil_{*}(R) \) is an ideal, it must be that \( a_0 \in \Delta(R) \) and, for every \( 1 \le  i \le  n \), the relation \( a_i \in Nil_{*}(R) \) holds. But, as \( R \) is a $2$-primal ring, \cite [Lemma 2.2]{21} is applicable to conclude that \( Nil_{*}(R)[x,\alpha] = Nil_{*}(R[x, \alpha]) \), as required.
	
Reciprocally, assume \( f \in \Delta(R) + Nil_{*}(R[x, \alpha])x \) and \( u \in U(R[x,\alpha]) \). Then, owing to \cite [Corollary 2.14]{21}, we have \( u \in U(R) + Nil_{*}(R[x, \alpha])x \). Since \( R \) is a $2$-primal ring, one has that \[ 1 - uf \in U(R) + Nil_{*}(R[x, \alpha])x \subseteq U(R[x, \alpha]), \] and thus \( f \in \Delta(R[x, \alpha]) \), as needed.
\end{proof}

We are now in a position to establish the following criterion.

\begin{theorem}\label{newext}
Let \( R \) be a $2$-primal ring and \( \alpha \) an endomorphism of \( R \) such that \( R \) is \( \alpha \)-compatible. The following are equivalent:
\begin{enumerate}
\item
\( R[x; \alpha] \) is an $n$-$\Delta$U ring.
\item
\( R \) is an $n$-$\Delta$U ring.
\end{enumerate}
\end{theorem}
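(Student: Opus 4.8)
The plan is to prove the two implications separately, with essentially all the substance concentrated in $(2)\Rightarrow(1)$.

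For $(1)\Rightarrow(2)$: the evaluation map $\varepsilon\colon R[x;\alpha]\to R$ sending $x$ to $0$ (equivalently $\sum a_{i}x^{i}\mapsto a_{0}$) is a surjective ring homomorphism, since comparing constant terms of a product in $R[x;\alpha]$ gives $(pq)(0)=p(0)q(0)$ and $\varepsilon$ plainly respects $xr=\alpha(r)x$. Moreover every $u\in U(R)$, regarded as a constant polynomial, is a unit of $R[x;\alpha]$ with $\varepsilon(u)=u$, so all units of $R$ lift to units of $R[x;\alpha]$. Hence Proposition \ref{2.5} applies and $R$ is $n$-$\Delta$U. (Alternatively, identify $R$ with the subring of constant polynomials; then $R\cap\Delta(R[x;\alpha])=\Delta(R)$ by Proposition \ref{3.13}, and Proposition \ref{2.6} gives the same conclusion.)

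For $(2)\Rightarrow(1)$: let $f\in U(R[x;\alpha])$. By \cite[Corollary 2.14]{21} we may write $f=a_{0}+h$ with $a_{0}\in U(R)$ and $h\in Nil_{*}(R[x;\alpha])x$; set $N:=Nil_{*}(R[x;\alpha])x$. I first check that $N$ is a two-sided ideal of $R[x;\alpha]$: it is additively closed, and since $Nil_{*}(R[x;\alpha])$ is an ideal and $x\,R[x;\alpha]\subseteq R[x;\alpha]\,x$, for $g\in Nil_{*}(R[x;\alpha])$ and $p\in R[x;\alpha]$ one has $p(gx)=(pg)x\in N$ and $(gx)p=g(xp)\in g\,R[x;\alpha]\,x\subseteq Nil_{*}(R[x;\alpha])\,x=N$. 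Next, comparing constant terms shows that $f^{n}$ has constant term $a_{0}^{n}$ (any word in the expansion of $(a_{0}+h)^{n}$ that involves $h$ has zero constant term), and expanding $(a_{0}+h)^{n}-a_{0}^{n}$ as a sum of words each containing at least one letter $h\in N$ yields $f^{n}-a_{0}^{n}\in N$. Since $R$ is $n$-$\Delta$U and $a_{0}\in U(R)$, we have $a_{0}^{n}-1\in\Delta(R)$, whence $f^{n}-1=(a_{0}^{n}-1)+(f^{n}-a_{0}^{n})\in\Delta(R)+Nil_{*}(R[x;\alpha])x=\Delta(R[x;\alpha])$ by Proposition \ref{3.13}. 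Therefore $R[x;\alpha]$ is $n$-$\Delta$U.

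The one point requiring care is this forward direction: one must verify that $Nil_{*}(R[x;\alpha])x$ really is a two-sided ideal in the skew setting (so that the non-commutative expansion of $f^{n}$ stays inside it) and that $f^{n}$ has constant term exactly $a_{0}^{n}$. This is precisely where the hypotheses that $R$ is $2$-primal and $\alpha$-compatible are used — they enter through the unit description $U(R[x;\alpha])=U(R)+Nil_{*}(R[x;\alpha])x$ of \cite[Corollary 2.14]{21} (and, if one prefers a coefficientwise argument, through $Nil_{*}(R[x;\alpha])=Nil_{*}(R)[x;\alpha]$ of \cite[Lemma 2.2]{21}). Everything else is routine manipulation already packaged in Propositions \ref{2.5}, \ref{2.6} and \ref{3.13}.
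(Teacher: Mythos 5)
Your proof is correct and follows essentially the same route as the paper: the forward direction via the unit description $U(R[x;\alpha])=U(R)+Nil_{*}(R[x;\alpha])x$ from \cite[Corollary 2.14]{21} together with Proposition \ref{3.13}, and the converse by restricting to constant units. Your version is in fact slightly more careful than the paper's, since you explicitly verify that $Nil_{*}(R[x;\alpha])x$ is a two-sided ideal so that the expansion of $f^{n}$ stays inside it.
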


\begin{proof}
(ii) $\Rightarrow$ (i). Let $f = \sum_{i=0}^{n} u_i x^i \in U(R[x, \alpha])$, so in view of \cite [Corollary 2.14]{21} one arrives at \( u_0 \in U(R) \) and \( u_i \in Nil(R) \) for each \( i \geq 1 \). Then, by hypothesis, \(1- u_0^n\in \Delta(R) \). Therefore, with \cite [Corollary 2.14]{21} at hand, there exists $g\in {Nil}_*(R)[x; \alpha]$ such that
\[
f^n=u_0^n+gx\in 1+\Delta(R)+Nil_{*}(R[x, \alpha])x,
\]
and hence with the aid of Proposition \ref{3.13} we obtain
\[
f^n\in 1+\Delta(R[x; \alpha]).
\]
(i) $\Rightarrow$ (ii). Let \( u \in U(R) \subseteq U(R[x; \alpha]) \). Hence, \[ u^n \in 1 + \Delta(R[x; \alpha]) = 1 + \Delta(R) + Nil_{*}(R[x, \alpha])x .\] Thus, we have \( u^n \in 1 + \Delta(R) \) whence \( R \) is an $n$-$\Delta$U ring, as wanted.
\end{proof}

As a valuable consequence, we arrive at the following.

\begin{corollary}
Let \( R \) be a \(2\)-primal ring. Then, the following are equivalent:
\begin{enumerate}
\item
\( R[x] \) is an $n$-$\Delta$U ring.
\item
\( R \) is an $n$-$\Delta$U ring.
\end{enumerate}
\end{corollary}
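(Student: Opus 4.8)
The plan is to deduce this corollary as the special case $\alpha = \mathrm{id}_R$ of Theorem \ref{newext}. First I would recall that, as already recorded in the text preceding Proposition \ref{3.2}, one has $R[x] = R[x;1_R]$ by definition; hence the two statements (i) and (ii) of the corollary are literally the statements (i) and (ii) of Theorem \ref{newext} once we substitute $\alpha = \mathrm{id}_R$. So no separate argument is needed beyond checking that the hypotheses of that theorem are satisfied in the present situation.

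Next I would verify those hypotheses. The ring $R$ is assumed $2$-primal, so that requirement is met outright. It remains only to observe that $R$ is $\mathrm{id}_R$-compatible: by definition this asks that $ab = 0 \iff a\,\mathrm{id}_R(b) = 0$ for all $a,b\in R$, and since $\mathrm{id}_R(b) = b$ this is a tautology. Thus $R$ is trivially $\mathrm{id}_R$-compatible, and Theorem \ref{newext} applies verbatim with $\alpha = \mathrm{id}_R$.

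Combining these two observations, Theorem \ref{newext} gives the desired equivalence of (i) and (ii), which finishes the proof. I do not anticipate any real obstacle here, as the corollary is a pure specialization; the only points worth making explicit are the identification $R[x] = R[x;1_R]$ and the (trivial) $\mathrm{id}_R$-compatibility of $R$, after which one simply invokes the theorem.
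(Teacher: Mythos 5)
Your proposal is correct and matches the paper's intent exactly: the corollary is stated as an immediate consequence of Theorem~\ref{newext}, obtained by taking $\alpha = 1_R$, and your explicit verification that $R[x]=R[x;1_R]$ and that every ring is trivially $1_R$-compatible is precisely the (unwritten) argument the authors rely on. Nothing further is needed.
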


Let $R$ be a ring and $M$ a bi-module over $R$. The {\it trivial extension} of $R$ and $M$ is stated as
\[ T(R, M) = \{(r, m) : r \in R \text{ and } m \in M\}, \]
with addition defined component-wise and multiplication defined by
\[ (r, m)(s, n) = (rs, rn + ms). \]
One knows that the trivial extension $T(R, M)$ is isomorphic to the subring
\[ \left\{ \begin{pmatrix} r & m \\ 0 & r \end{pmatrix} : r \in R \text{ and } m \in M \right\} \]
of the formal $2 \times 2$ matrix ring $\begin{pmatrix} R & M \\ 0 & R \end{pmatrix}$. We also notice that the set of units of the trivial extension $T(R, M)$ is precisely
\[ U(T(R, M)) = T(U(R), M). \]
Also, by \cite {1}, one may exactly write that
\[ \Delta(T(R, M)) = T(\Delta(R), M). \]

We are now ready to prove the following.

\begin{proposition}\label{3.4}
Let $R$ be a ring and $M$ a bi-module over $R$. Then, the following hold:
\begin{enumerate}
\item
The trivial extension ${\rm T}(R, M)$ is an $n$-$\Delta U$ ring if, and only if, $R$ is an $n$-$\Delta U$ ring.
\item
The upper triangular matrix ring $T_n(R)$ is an $n$-$\Delta U$ if, and only if, $R$ is an $n$-$\Delta U$ ring.
\end{enumerate}
\end{proposition}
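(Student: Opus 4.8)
The plan is to obtain both equivalences as immediate consequences of Theorem~\ref{2.7}, in each case by exhibiting a nil ideal that lies inside the Jacobson radical and whose quotient is (a power of) $R$.

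For part (i), I would consider the ideal $N = \{(0,m) : m \in M\}$ of $T(R,M)$, which is a genuine two-sided ideal because $M$ is a bi-module. Since $(0,m)(0,m') = (0,0)$, we have $N^2 = 0$, so $N$ is a nil ideal and hence $N \subseteq J(T(R,M))$. As $T(R,M)/N \cong R$, Theorem~\ref{2.7} yields at once that $T(R,M)$ is $n$-$\Delta$U if and only if $R$ is $n$-$\Delta$U. Alternatively, one can argue directly from the displayed identities $U(T(R,M)) = T(U(R),M)$ and $\Delta(T(R,M)) = T(\Delta(R),M)$: for $(u,m)\in U(T(R,M))$ a short induction gives $(u,m)^n = (u^n, m')$ for some $m'\in M$, so $(u,m)^n - (1,0) \in T(\Delta(R),M)$ exactly when $u^n-1\in\Delta(R)$; the converse implication follows by specializing to $m=0$, since $(u,0)\in U(T(R,M))$ whenever $u\in U(R)$.

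For part (ii), I would let $S\subseteq T_n(R)$ be the ideal of strictly upper triangular matrices. Then $S$ is nilpotent (indeed $S^n = 0$), hence $S\subseteq J(T_n(R))$, and taking diagonal entries induces a ring isomorphism $T_n(R)/S \cong \prod_{i=1}^n R$. Applying Theorem~\ref{2.7}, $T_n(R)$ is $n$-$\Delta$U if and only if $\prod_{i=1}^n R$ is $n$-$\Delta$U, and Proposition~\ref{2.3} then says the latter holds precisely when $R$ is $n$-$\Delta$U.

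I do not expect a serious obstacle here: the argument is essentially bookkeeping, and the only point requiring a word of care is checking that the ideals $N$ and $S$ really sit inside the respective Jacobson radicals, which is immediate from their being nil (square-zero, respectively nilpotent of index at most $n$), so that the hypothesis $I\subseteq J(R)$ of Theorem~\ref{2.7} is met.
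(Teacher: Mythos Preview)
Your proposal is correct and follows essentially the same route as the paper: in each part the paper also passes to the quotient by the nil ideal $T(0,M)$ (respectively the strictly upper triangular matrices), invokes Theorem~\ref{2.7}, and in part~(ii) finishes with Proposition~\ref{2.3}. Your write-up is actually slightly more explicit than the paper's in justifying the containment in the Jacobson radical via nilpotency, and your alternative direct argument for part~(i) using the displayed identities for $U(T(R,M))$ and $\Delta(T(R,M))$ is a pleasant bonus not present in the paper.
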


\begin{proof}
\begin{enumerate}
\item
Set $A={\rm T}(R, M)$ and consider the ideal $I:={\rm T}(0, M)$. Then, one finds that $I\subseteq J(A)$ such that $\dfrac{A}{I} \cong R$. So, the result follows directly from Theorem \ref{2.7}.
\item
Let $I=\left\lbrace (a_{ij})\in T_{n}(R)\, | \, a_{ii}=0\right\rbrace$. Then, one establishes that $I\subseteq J(T_{n}(R))$ with $T_{n}(R)/I\cong R^n$. Therefore, the desired result follows from a plain combination of Theorem \ref{2.7} and Proposition \ref{2.3}.
\end{enumerate}
\end{proof}

Let $\alpha$ be an endomorphism of $R$ and $n$ a positive integer. It was defined by Nasr-Isfahani in \cite{17} the {\it skew triangular matrix ring} like this:

$${\rm T}_{n}(R,\alpha )=\left\{ \left. \begin{pmatrix}
	a_{0} & a_{1} & a_{2} & \cdots & a_{n-1} \\
	0 & a_{0} & a_{1} & \cdots & a_{n-2} \\
	0 & 0 & a_{0} & \cdots & a_{n-3} \\
	\ddots & \ddots & \ddots & \vdots & \ddots \\
	0 & 0 & 0 & \cdots & a_{0}
\end{pmatrix} \right| a_{i}\in R \right\}$$

\medskip

with addition point-wise and multiplication given by:

\medskip

\begin{align*}
	&\begin{pmatrix}
		a_{0} & a_{1} & a_{2} & \cdots & a_{n-1} \\
		0 & a_{0} & a_{1} & \cdots & a_{n-2} \\
		0 & 0 & a_{0} & \cdots & a_{n-3} \\
		\ddots & \ddots & \ddots & \vdots & \ddots \\
		0 & 0 & 0 & \cdots & a_{0}
	\end{pmatrix}\begin{pmatrix}
		b_{0} & b_{1} & b_{2} & \cdots & b_{n-1} \\
		0 & b_{0} & b_{1} & \cdots & b_{n-2} \\
		0 & 0 & b_{0} & \cdots & b_{n-3} \\
		\ddots & \ddots & \ddots & \vdots & \ddots \\
		0 & 0 & 0 & \cdots & b_{0}
	\end{pmatrix}  =\\
	& \begin{pmatrix}
		c_{0} & c_{1} & c_{2} & \cdots & c_{n-1} \\
		0 & c_{0} & c_{1} & \cdots & c_{n-2} \\
		0 & 0 & c_{0} & \cdots & c_{n-3} \\
		\ddots & \ddots & \ddots & \vdots & \ddots \\
		0 & 0 & 0 & \cdots & c_{0}
	\end{pmatrix},
\end{align*}
where $$c_{i}=a_{0}\alpha^{0}(b_{i})+a_{1}\alpha^{1}(b_{i-1})+\cdots +a_{i}\alpha^{i}(b_{0}),~~ 1\leq i\leq n-1
.$$

\medskip

We denote the elements of ${\rm T}_{n}(R, \alpha)$ by $(a_{0},a_{1},\ldots , a_{n-1})$. If $\alpha $ is the identity endomorphism, then one easily checks that ${\rm T}_{n}(R,\alpha )$ is a subring of the {\it upper triangular matrix ring} ${\rm T}_{n}(R)$.

\medskip

All of the mentioned above guarantee the truthfulness of the following statement.

\begin{proposition}\label{3.3}
Let $R$ be a ring and $k\geq 1$. Then, the following are equivalent:
\begin{enumerate}
\item
${\rm T}_{n}(R,\alpha )$ is a $k$-$\Delta U$ ring.
\item
$R$ is a $k$-$\Delta U$ ring.
\end{enumerate}
\end{proposition}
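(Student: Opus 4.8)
The plan is to reduce this equivalence to Theorem \ref{2.7}, exactly in the spirit of the proof of Proposition \ref{3.4}: I would exhibit an ideal $I$ of $T_{n}(R,\alpha)$ with $I\subseteq J(T_{n}(R,\alpha))$ and $T_{n}(R,\alpha)/I\cong R$, and then invoke Theorem \ref{2.7} to transfer the $k$-$\Delta$U property back and forth.

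Concretely, first I would set
\[
I=\{(0,a_{1},a_{2},\ldots,a_{n-1}) : a_{i}\in R\},
\]
the set of skew triangular matrices with zero diagonal. From the displayed multiplication rule, the constant term of a product is $c_{0}=a_{0}\alpha^{0}(b_{0})$, so if either factor has vanishing diagonal entry then so does the product; hence $I$ is a two-sided ideal. Moreover the diagonal-entry map $(a_{0},a_{1},\ldots,a_{n-1})\mapsto a_{0}$ is a surjective ring homomorphism $T_{n}(R,\alpha)\to R$ with kernel exactly $I$, giving $T_{n}(R,\alpha)/I\cong R$.

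Next I would check that $I\subseteq J(T_{n}(R,\alpha))$. Since $I$ consists of the ``strictly upper triangular'' skew matrices, composing $n$ of them annihilates everything, i.e. $I^{n}=\{0\}$; thus $I$ is a nilpotent ideal and therefore contained in the Jacobson radical. With this in hand, Theorem \ref{2.7} applied to $I\subseteq J(T_{n}(R,\alpha))$ yields that $T_{n}(R,\alpha)$ is $k$-$\Delta$U if and only if $T_{n}(R,\alpha)/I\cong R$ is $k$-$\Delta$U, which is precisely the asserted equivalence (i) $\Longleftrightarrow$ (ii).

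The only point requiring any care — and it is entirely routine — is verifying directly from the formula for the $c_{i}$ that $I$ is closed under multiplication by arbitrary elements of $T_{n}(R,\alpha)$ and that $I^{n}=0$; both are immediate consequences of the fact that every term in $c_{i}$ carries a factor $a_{0}$ or $\alpha^{i}(b_{0})$ in low degrees, and that iterating the ``shift'' $n$ times kills the ring. Notice that no hypothesis on $\alpha$ beyond its being a ring endomorphism is used, and the integer $k$ plays no active role in the argument.
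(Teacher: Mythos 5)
Your proposal is correct and follows essentially the same route as the paper: both identify the ideal $I$ of strictly upper triangular elements of ${\rm T}_n(R,\alpha)$, observe that $I$ is nilpotent (hence contained in the Jacobson radical) with ${\rm T}_n(R,\alpha)/I\cong R$ via the diagonal map, and then invoke Theorem~\ref{2.7}. Your write-up actually supplies the routine verifications (that $I$ is an ideal, that $I^n=0$) which the paper leaves to the reader.
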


\begin{proof}
Choose the set
	$$I:=\left\{
	\left.
	\begin{pmatrix}
		0 & a_{12} & \ldots & a_{1n} \\
		0 & 0 & \ldots & a_{2n} \\
		\vdots & \vdots & \ddots & \vdots \\
		0 & 0 & \ldots & 0
	\end{pmatrix} \right| a_{ij}\in R \quad (i\leq j )
	\right\}.$$
Then, one easily verifies that $I\subseteq J({\rm T}_{n}(R,\alpha ))$ and $\dfrac{{\rm T}_{n}(R,\alpha )}{I} \cong R$. Consequently, Theorem \ref{2.7} directly applies to get the expected result.
\end{proof}

A simple manipulation with coefficients guarantees that there is a ring isomorphism $$\varphi : \dfrac{R[x,\alpha]}{(x^n)}\rightarrow {\rm T}_{n}(R,\alpha),$$ given by $$\varphi (a_{0}+a_{1}x+\ldots +a_{n-1}x^{n-1}+\langle x^{n} \rangle )=(a_{0},a_{1},\ldots ,a_{n-1})$$ with $a_{i}\in R$, $0\leq i\leq n-1$. So, one finds that ${\rm T}_{n}(R,\alpha )\cong \dfrac{R[x,\alpha ]}{(x^n)}$, where $(x^n)$ is the ideal generated by $x^{n}$.

\medskip

We, thus, proceed by discovering the following two claims.

\begin{corollary}\label{3.11}
Let $R$ be a ring and $k\geq 1$. Then, the following are equivalent:
\begin{enumerate}
\item
$R$ is a $k$-$\Delta U$ ring.
\item
For $n \geq 2$, the quotient-ring $\dfrac{R[x; \alpha]}{(x^n)}$ is a $k$-$\Delta U$ ring.
\item
For $n \geq 2$, the quotient-ring $\dfrac{R[[x; \alpha]]}{(x^n)}$ is a $k$-$\Delta U$ ring.
\end{enumerate}
\end{corollary}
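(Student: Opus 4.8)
The plan is to reduce the whole statement to Proposition~\ref{3.3} by realizing both truncated skew series rings as copies of the skew triangular matrix ring ${\rm T}_n(R,\alpha)$. Since the property of being $k$-$\Delta$U is transported along any ring isomorphism (both $\Delta(-)$ and $U(-)$ are), the equivalence (i) $\Leftrightarrow$ (ii) is immediate from the isomorphism $\varphi\colon R[x;\alpha]/(x^n)\to{\rm T}_n(R,\alpha)$ displayed just before the corollary, together with Proposition~\ref{3.3}, which says ${\rm T}_n(R,\alpha)$ is $k$-$\Delta$U precisely when $R$ is.

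For (i) $\Leftrightarrow$ (iii) I would establish the analogous isomorphism $R[[x;\alpha]]/(x^n)\cong {\rm T}_n(R,\alpha)$. First I would check that, as a two-sided ideal, $(x^n)=R[[x;\alpha]]\,x^n$: using $x^n r=\alpha^n(r)x^n$ one can slide any left multiplier of $x^n$ to its right, so a finite sum $\sum_k s_k x^n t_k$ collapses to $\bigl(\sum_k s_k t_k'\bigr)x^n$ for suitable $t_k'\in R[[x;\alpha]]$. Hence $(x^n)$ consists precisely of those power series whose coefficients in degrees $0,1,\dots,n-1$ all vanish, and the truncation $\sum_{i\ge 0}a_ix^i\mapsto(a_0,a_1,\dots,a_{n-1})$ descends to an additive bijection $R[[x;\alpha]]/(x^n)\to{\rm T}_n(R,\alpha)$. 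It is multiplicative because the defining coefficient rule $c_i=\sum_{j=0}^{i}a_j\alpha^j(b_{i-j})$ of ${\rm T}_n(R,\alpha)$ is exactly the skew-series product read off in degrees $<n$, the higher terms being killed modulo $(x^n)$. Then Proposition~\ref{3.3} applies verbatim to give that $R[[x;\alpha]]/(x^n)$ is $k$-$\Delta$U iff $R$ is.

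The only point demanding a little care --- the ``hard part,'' such as it is --- is the identification of the ideal $(x^n)$ in the skew power series ring: one should note that it equals $R[[x;\alpha]]\,x^n$ (all series right-divisible by $x^n$) and not the possibly strictly smaller $x^n R[[x;\alpha]]$, the two coinciding only when $\alpha$ is surjective. Because $x^n$ sits on the right in the truncation map, no surjectivity hypothesis on $\alpha$ is needed, and everything else is routine bookkeeping. With the two isomorphisms in place, (i), (ii) and (iii) are mutually equivalent, which finishes the argument.
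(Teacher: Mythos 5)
Your proof is correct and follows the paper's intended route: the isomorphism $R[x;\alpha]/(x^n)\cong {\rm T}_n(R,\alpha)$ is stated immediately before the corollary and the result is derived from Proposition~\ref{3.3}, exactly as you do, with your verification for the power-series quotient supplying detail the paper leaves implicit. (One verbal slip: the relation $x^n r=\alpha^n(r)x^n$ moves \emph{right} multipliers of $x^n$ to the left rather than left multipliers to the right, but the identity $\sum_k s_k x^n t_k=\bigl(\sum_k s_k t_k'\bigr)x^n$ you actually display is the correct one, so nothing is affected.)
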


\begin{corollary}\label{3.12}
Let $R$ be a ring. Then, the following are equivalent:
\begin{enumerate}
\item
$R$ is a $k$-$\Delta$U ring.
\item
For $n \geq 2$, the quotient-ring $\dfrac{R[x]}{(x^n)}$ is a $k$-$\Delta U$ ring.
\item
For $n \geq 2$, the quotient-ring $\dfrac{R[[x]]}{(x^n)}$ is a $k$-$\Delta U$ ring.
\end{enumerate}
\end{corollary}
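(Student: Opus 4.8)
The plan is to derive this statement as the special case of Corollary~\ref{3.11} corresponding to the identity endomorphism. Recall from the paragraphs preceding Propositions~\ref{3.2} and~\ref{3.3} that, by definition, $R[x] = R[x; 1_R]$ and $R[[x]] = R[[x; 1_R]]$, where $1_R$ denotes the identity map of $R$. Since $1_R$ is trivially an endomorphism of $R$, Corollary~\ref{3.11} applies with $\alpha = 1_R$, and its three conditions then read verbatim as conditions (i), (ii) and (iii) of the present corollary. Hence the asserted equivalences follow at once.

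If one prefers to bypass Corollary~\ref{3.11}, the same conclusion can be reached directly. Specializing the ring isomorphism ${\rm T}_n(R,\alpha) \cong R[x,\alpha]/(x^n)$ to $\alpha = 1_R$, and observing that reducing modulo $(x^n)$ annihilates every monomial $x^i$ with $i \ge n$ in both $R[x]$ and $R[[x]]$ (a power series being determined modulo $(x^n)$ by its truncation of degree $< n$, exactly as a polynomial is), one obtains $R[x]/(x^n) \cong R[[x]]/(x^n) \cong {\rm T}_n(R, 1_R)$. Then Proposition~\ref{3.3}, taken with $\alpha = 1_R$ and the integer $k$, shows that ${\rm T}_n(R, 1_R)$ is $k$-$\Delta$U precisely when $R$ is. Combining these isomorphisms with that equivalence gives (i) $\Leftrightarrow$ (ii) $\Leftrightarrow$ (iii).

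There is no genuine obstacle in this argument: once the identification of the two quotient rings with ${\rm T}_n(R, 1_R)$ is in place, everything is an immediate instance of results already proved. The only point deserving a line of comment is precisely that identification of $R[x]/(x^n)$ and $R[[x]]/(x^n)$, and this is routine because truncation at degree $n$ is the same operation on polynomials and on power series.
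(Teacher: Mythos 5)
Your proposal is correct and follows essentially the same route the paper intends: Corollary~\ref{3.12} is just Corollary~\ref{3.11} with $\alpha=1_R$, which in turn rests on the isomorphisms $R[x]/(x^n)\cong R[[x]]/(x^n)\cong {\rm T}_n(R,1_R)$ and Proposition~\ref{3.3}. The one point you rightly flag --- that truncation at degree $n$ identifies the polynomial and power-series quotients --- is handled adequately.
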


Consider now $R$ to be a ring and $M$ to be a bi-module over $R$. Let $${\rm DT}(R,M) := \{ (a, m, b, n) | a, b \in R, m, n \in M \}$$ with addition defined component-wise and multiplication defined by $$(a_1, m_1, b_1, n_1)(a_2, m_2, b_2, n_2) = (a_1a_2, a_1m_2 + m_1a_2, a_1b_2 + b_1a_2, a_1n_2 + m_1b_2 + b_1m_2 +n_1a_2).$$ Then, one claims that ${\rm DT}(R,M)$ is a ring which is isomorphic to ${\rm T}({\rm T}(R, M), {\rm T}(R, M))$. Also, we have $${\rm DT}(R, M) =
\left\{\begin{pmatrix}
	a &m &b &n\\
	0 &a &0 &b\\
	0 &0 &a &m\\
	0 &0 &0 &a
\end{pmatrix} |  a,b \in R, m,n \in M\right\}.$$ Likewise, one asserts that the following map is an isomorphism of rings: $\dfrac{R[x, y]}{\langle x^2, y^2\rangle} \rightarrow {\rm DT}(R, R)$, defined by $$a + bx + cy + dxy \mapsto
\begin{pmatrix}
	a &b &c &d\\
	0 &a &0 &c\\
	0 &0 &a &b\\
	0 &0 &0 &a
\end{pmatrix}.$$

We, thereby, detect the following.

\begin{corollary}\label{3.5}
Let $R$ be a ring and $M$ a bi-module over $R$. Then, the following statements are equivalent:
\begin{enumerate}
\item
$R$ is an $n$-$\Delta U$ ring.
\item
${\rm DT}(R, M)$ is an $n$-$\Delta U$ ring.
\item
${\rm DT}(R, R)$ is an $n$-$\Delta U$ ring.
\item
$\dfrac{R[x, y]}{\langle x^2, y^2\rangle}$ is an $n$-$\Delta U$ ring.
\end{enumerate}
\end{corollary}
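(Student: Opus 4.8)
The plan is to deduce all four equivalences from Proposition~\ref{3.4}(1) together with the two ring isomorphisms recorded immediately before the statement, using only the trivial observation that the $n$-$\Delta$U property is invariant under ring isomorphism (it is phrased purely in terms of units and $\Delta$, both of which any ring isomorphism preserves).

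\textbf{Step 1: (i) $\Leftrightarrow$ (ii).} I would start from the isomorphism ${\rm DT}(R,M) \cong {\rm T}({\rm T}(R,M),{\rm T}(R,M))$. Writing $S = {\rm T}(R,M)$, this exhibits ${\rm DT}(R,M)$ as the trivial extension ${\rm T}(S,S)$ of the ring $S$ by the bi-module $S$ (every ring is a bi-module over itself). Applying Proposition~\ref{3.4}(1) to ${\rm T}(S,S)$ gives that ${\rm DT}(R,M)$ is $n$-$\Delta$U if, and only if, $S = {\rm T}(R,M)$ is $n$-$\Delta$U; applying Proposition~\ref{3.4}(1) a second time, now to ${\rm T}(R,M)$, gives that ${\rm T}(R,M)$ is $n$-$\Delta$U if, and only if, $R$ is. Concatenating these two biconditionals yields (i) $\Leftrightarrow$ (ii).

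\textbf{Step 2: (i) $\Leftrightarrow$ (iii) and (iii) $\Leftrightarrow$ (iv).} The equivalence (i) $\Leftrightarrow$ (iii) is simply the special case $M = R$ of Step~1. For (iii) $\Leftrightarrow$ (iv), I would invoke the ring isomorphism $R[x,y]/\langle x^2, y^2\rangle \cong {\rm DT}(R,R)$ displayed above; since being $n$-$\Delta$U transfers across ring isomorphisms, one side satisfies the property exactly when the other does.

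\textbf{Main obstacle.} There is no real obstacle here: the substantive content — the reduction modulo an ideal contained in $J$ via Theorem~\ref{2.7} — has already been packaged inside Proposition~\ref{3.4}. The only points needing care, and they are routine verifications rather than difficulties, are to confirm that the two displayed isomorphisms indeed hold as stated and that Proposition~\ref{3.4}(1) is being applied in a legitimate instance, namely a trivial extension of a ring by itself.
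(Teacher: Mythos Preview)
Your proposal is correct and mirrors exactly what the paper intends: the corollary is stated without proof precisely because it follows at once from Proposition~\ref{3.4}(i) applied twice via the isomorphism ${\rm DT}(R,M)\cong {\rm T}({\rm T}(R,M),{\rm T}(R,M))$, together with the displayed isomorphism $R[x,y]/\langle x^2,y^2\rangle\cong {\rm DT}(R,R)$. There is nothing to add.
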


Let $A$, $B$ be two rings and $M$, $N$ be $(A,B)$-bi-module and $(B,A)$-bi-module, respectively. Also, we consider the bi-linear maps $\phi :M\otimes_{B}N\rightarrow A$ and $\psi:N\otimes_{A}M\rightarrow B$ that apply to the following properties:
$$Id_{M}\otimes_{B}\psi =\phi \otimes_{A}Id_{M},Id_{N}\otimes_{A}\phi =\psi \otimes_{B}Id_{N}.$$
For $m\in M$ and $n\in N$, define $mn:=\phi (m\otimes n)$ and $nm:=\psi (n\otimes m)$. Now the $4$-tuple $R=\begin{pmatrix}
	A & M\\
	N & B
\end{pmatrix}$ becomes to an associative ring with obvious matrix operations that is called a {\it Morita context } ring. Denote two-side ideals $Im \phi$ and $Im \psi$ to $MN$ and $NM$, respectively, that are called the {\it trace ideals} of the {\it Morita context}.

\medskip

We now have at our disposal all the ingredients necessary to establish the following.

\begin{proposition}\label{3.14}
Let $R=\left(\begin{array}{ll}A & M \\ N & B\end{array}\right)$ be a Morita context ring. Then, $R$ is a (2n-1)-$\Delta U$ ring if, and only if, both $A$, $B$ are (2n-1)-$\Delta U$ and  $MN \subseteq J(A)$, $NM\subseteq J(B)$.
\end{proposition}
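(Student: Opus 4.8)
The plan is to prove the two implications separately, using throughout the idempotent $e=\begin{pmatrix}1_A&0\\0&0\end{pmatrix}$ of $R$, for which $eRe\cong A$, $(1-e)R(1-e)\cong B$, $eR(1-e)=\begin{pmatrix}0&M\\0&0\end{pmatrix}$ and $(1-e)Re=\begin{pmatrix}0&0\\N&0\end{pmatrix}$.

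For the necessity, assume $R$ is $(2n-1)$-$\Delta$U. Then $A\cong eRe$ and $B\cong(1-e)R(1-e)$ are $(2n-1)$-$\Delta$U by Proposition~\ref{2.9}, so the real content is $MN\subseteq J(A)$ and $NM\subseteq J(B)$. Here the crucial remark is that, since $2n-1$ is odd, Proposition~\ref{2.4} gives $2\in\Delta(R)$. Fix $m\in M$. Then $\begin{pmatrix}1&m\\0&1\end{pmatrix}\in U(R)$, hence $\begin{pmatrix}0&(2n-1)m\\0&0\end{pmatrix}=\begin{pmatrix}1&m\\0&1\end{pmatrix}^{2n-1}-1\in\Delta(R)$; also $\begin{pmatrix}0&2m\\0&0\end{pmatrix}=2\begin{pmatrix}1&m\\0&1\end{pmatrix}-2\cdot 1\in\Delta(R)$, using that $\Delta(R)$ is a subring closed under multiplication by units. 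As $\gcd(2,2n-1)=1$ and $\Delta(R)$ is an additive subgroup, an integer combination of the last two elements yields $\begin{pmatrix}0&m\\0&0\end{pmatrix}\in\Delta(R)$, and symmetrically $\begin{pmatrix}0&0\\n&0\end{pmatrix}\in\Delta(R)$ for all $n\in N$. Multiplying within the subring $\Delta(R)$ gives $\begin{pmatrix}mn&0\\0&0\end{pmatrix}\in\Delta(R)$ and $\begin{pmatrix}0&0\\0&nm\end{pmatrix}\in\Delta(R)$, and since $\Delta(R)$ is additively closed we get $\begin{pmatrix}MN&0\\0&0\end{pmatrix}\subseteq\Delta(R)\cap eRe$ and $\begin{pmatrix}0&0\\0&NM\end{pmatrix}\subseteq\Delta(R)\cap(1-e)R(1-e)$. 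Two elementary facts now finish the argument: (a) $\Delta(R)\cap eRe\subseteq\Delta(eRe)$, because if $\delta\in\Delta(R)$ with $e\delta e=\delta$ and $v\in U(eRe)$, then $v+(1-e)\in U(R)$, so $1-v\delta=1-(v+(1-e))\delta\in U(R)$ is block-diagonal with respect to $e$, forcing its $eRe$-component $e-v\delta$ to be a unit of $eRe$; and (b) any ideal $I$ of a ring contained in its $\Delta$ lies in its Jacobson radical, since for $x\in I$ and any $a$ one has $ax\in I\subseteq\Delta$, whence $1-ax$ is a unit. Applying (a) and (b) to the ideals $MN\subseteq\Delta(A)$ and $NM\subseteq\Delta(B)$ gives the two containments.

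For the sufficiency, assume $A,B$ are $(2n-1)$-$\Delta$U with $MN\subseteq J(A)$, $NM\subseteq J(B)$, and put $K=\begin{pmatrix}J(A)&M\\N&J(B)\end{pmatrix}$. First I would verify that $K$ is a two-sided ideal of $R$; the only points needing the hypotheses are $A\cdot J(A)+MN\subseteq J(A)$ and $NM+B\cdot J(B)\subseteq J(B)$. Next, $K\subseteq J(R)$: for $\begin{pmatrix}j&m\\n&k\end{pmatrix}\in K$ the matrix $1-\begin{pmatrix}j&m\\n&k\end{pmatrix}$ is invertible in $R$ by the $2\times 2$ block (Schur complement) inversion formula, since $1-j\in U(A)$ and the Schur complement $(1-k)-n(1-j)^{-1}m=1-\bigl(k+n(1-j)^{-1}m\bigr)$ is a unit of $B$ because $n(1-j)^{-1}m\in NM\subseteq J(B)$. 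Since the quotient $R/K\cong A/J(A)\times B/J(B)$ has zero Jacobson radical, in fact $K=J(R)$, so $R/J(R)\cong A/J(A)\times B/J(B)$. By Corollary~\ref{2.8} each of $A/J(A)$, $B/J(B)$ is $(2n-1)$-$\Delta$U, by Proposition~\ref{2.3} so is their product, and by Corollary~\ref{2.8} once more $R$ is $(2n-1)$-$\Delta$U.

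The step I expect to be the main obstacle is passing from $\begin{pmatrix}0&(2n-1)m\\0&0\end{pmatrix}\in\Delta(R)$ to $\begin{pmatrix}0&m\\0&0\end{pmatrix}\in\Delta(R)$ in the necessity part: this is exactly where oddness of the exponent is indispensable, via $2\in\Delta(R)$ together with $\gcd(2,2n-1)=1$, and it explains why the statement is phrased for exponents of the form $2n-1$ (for even exponents the conclusion already fails for $M_2(R)$, cf. Proposition~\ref{2.10}). The remaining $\Delta$-bookkeeping — facts (a) and (b) above — is routine and parallels Propositions~\ref{2.6} and \ref{2.9}.
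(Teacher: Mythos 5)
Your argument is correct and follows essentially the same route as the paper's: corner rings give the $(2n-1)$-$\Delta$U property of $A$ and $B$, the unipotent units $\begin{pmatrix}1&m\\0&1\end{pmatrix}$ put $(2n-1)m$ into $\Delta(R)$, products then land $MN$ and $NM$ in $\Delta$, an ideal contained in $\Delta$ lies in $J$, and the converse uses $J(R)=\begin{pmatrix}J(A)&M\\N&J(B)\end{pmatrix}$ exactly as in the paper. The only local difference is how you strip off the factor $2n-1$: the paper notes that $2\in\Delta(R)$ makes $2n-1$ a unit and multiplies by $\mathrm{diag}\bigl((2n-1)^{-1},1\bigr)$, whereas you use $2\in\Delta(R)$ to place $2m$ in $\Delta(R)$ and then take a B\'ezout combination with $\gcd(2,2n-1)=1$ --- an equally valid (and arguably cleaner) variant.
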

\begin{proof}
Let \( R \) be a $(2n-1)$-$\Delta$U ring. Consider \( e := \begin{pmatrix} 1_A & 0 \\ 0 & 1_B \end{pmatrix} \). Then, one says that \( eRe \cong A \) and \( (1 - e)R(1 - e) \cong B \).
So, thankfully to Proposition \ref{2.9}, we get that \( A, B \) are both $(2n-1)$-$\Delta$U. Obviously, \( \begin{pmatrix} 1 & m \\ 0 & 1 \end{pmatrix} \in U(R) \). Therefore, \[ \begin{pmatrix} 1 & m \\ 0 & 1 \end{pmatrix}^{2n-1} = \begin{pmatrix} 1 & (2n-1)m \\ 0 & 1 \end{pmatrix} \in 1+ \Delta(R) \] and hence \(\begin{pmatrix} 0 & (2n-1)m \\ 0 & 0 \end{pmatrix} \in \Delta(R) \). Similarly, we obtain \( \begin{pmatrix} 0 & 0 \\ (2n-1)m' & 0 \end{pmatrix} \in \Delta(R) \), where \( m' \in N \). Since \( 2 \in \Delta(R), 2n - 1 \in U(A) \), for any \( m \in M \) and \( m' \in N \) we receive that \[ \begin{pmatrix} (2n-1)^{-1} & 0 \\ 0 & 1 \end{pmatrix} \begin{pmatrix} 0 & (2n-1)m \\ 0 & 0 \end{pmatrix} \in \Delta(R). \]
Then, it must be that \( \begin{pmatrix} 0 & m \\ 0 & 0 \end{pmatrix} \in \Delta(R) \). Also,
\[\begin{pmatrix} 0 & 0 \\ (2n-1)m' & 0 \end{pmatrix} \begin{pmatrix} (2n-1)^{-1} & 0 \\ 0 & 1 \end{pmatrix}  \in \Delta(R). \]
Thus, \( \begin{pmatrix} 0 & 0 \\ m' & 0 \end{pmatrix} \in \Delta(R). \)
Since \( \Delta(R) \) is a subring, we have
\( \begin{pmatrix} 0 & M \\ N & 0 \end{pmatrix} \in \Delta(R) \). Then, for any \( m \in M \) and \( m' \in N \), we have
\[
\begin{pmatrix} 0 & m \\ 0 & 0 \end{pmatrix} \begin{pmatrix} 0 & 0 \\ m' & 0 \end{pmatrix} \in \Delta(R) \Rightarrow \begin{pmatrix} MN & 0 \\ 0 & 0 \end{pmatrix} \in \Delta(R),
\]
\[
\begin{pmatrix} 0 & 0 \\ m' & 0 \end{pmatrix} \begin{pmatrix} 0 & m \\ 0 & 0 \end{pmatrix} \in \Delta(R) \Rightarrow \begin{pmatrix} 0 & 0 \\ 0 & NM \end{pmatrix} \in \Delta(R).
\]
Since \( \Delta(R) \) is a subring, we can verify that \( I := \begin{pmatrix} MN & M \\ N & NM \end{pmatrix} \subseteq \Delta(R) \) and $I$ is an ideal, whence \( I \subseteq J(R) \). Consequently, \( MN \subseteq J(A) \) and \( NM \subseteq J(B) \) invoking \cite[Theorem 2.5]{13}, as required.

Reciprocally, let \( A, B \) be $(2n-1)$-$\Delta$U, where \( MN \subseteq J(A) \) and \( NM \subseteq J(B) \). Then, utilizing \cite[Lemma 3.1]{13}, we derive that \(J(R) = \begin{pmatrix} J(A) & M \\ N & J(B) \end{pmatrix} \). Thus, the isomorphism \( \frac{R}{J(R)} \cong \frac{A}{J(A)} \times \frac{B}{J(B)} \) is fulfilled. Finally, that \( R \) is $(2n-1)$-$\Delta$U is guaranteed by virtue of Proposition \ref{2.3} and Corollary \ref{2.8}, as needed.
\end{proof}

The next comments are worthwhile.

\begin{remark}
Exploiting Proposition \ref{3.14}, we have that if \( R \) is $(2n)$-$\Delta$U, then both \( A, B \) are $(2n)$-$\Delta$U and the containments \( (2n)MN \subseteq J(A) \), \( (2n)NM \subseteq J(B) \) hold. Now, a quite logical question arises that, if \( A, B \) are $(2n)$-$\Delta$U, where \( (2n)MN \subseteq J(A) \) and \( (2n)NM \subseteq J(B) \), can it be deduced that \( R \) is a $(2n)$-$\Delta$U ring?\\
However, the answer is negative as the following construction illustrates: letting \( R := \mathbb{F}_2 \langle x, y | x^2 = 0 \rangle \), then it can be checked that \( R \) is $2$-$\Delta$U and \( 2R = \{0\} \), but \( M_2(R) \) is not $2$-$\Delta$U.

Moreover, an other natural question arises, namely that if \( R \) is a $(2n)$-$\Delta$U ring, whether it be derived that \( MN \subseteq J(A) \) and \( NM \subseteq J(B) \)?\\ Again, the answer is contrapositive, because we know that \( M_2(\mathbb{Z}_2) \) is $6$-$\Delta$U; in fact, supposing \( A = B = M = N = \mathbb{Z}_2 \), then \( R = M_2(\mathbb{Z}_2) \) is $6$-$\Delta$U, but \( MN \not\subseteq J(A) \) and \( NM \not\subseteq J(B) \), as it can be verified without any difficulty.
\end{remark}

The following result could also be of some helpfulness and importance.

\begin{proposition}\label{3.6}
Let $R=\left(\begin{array}{ll}A & M \\ N & B\end{array}\right)$ be a Morita context ring such that $MN \subseteq J(A)$ and $NM\subseteq J(B)$. Then, $R$ is an $n$-$\Delta U$ ring if, and only if, both $A$ and $B$ are $n$-$\Delta U$.
\end{proposition}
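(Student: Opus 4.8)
The plan is to reduce everything to the Jacobson-radical quotient, exactly as in the second half of the proof of Proposition \ref{3.14}: the hypotheses $MN\subseteq J(A)$ and $NM\subseteq J(B)$ are precisely what is needed to pin down $J(R)$. First I would invoke \cite[Lemma 3.1]{13} to conclude that, under these hypotheses, $J(R)=\left(\begin{array}{ll}J(A) & M \\ N & J(B)\end{array}\right)$, so that the canonical map induces a genuine \emph{ring} isomorphism $R/J(R)\cong A/J(A)\times B/J(B)$ (the point being that $M$ and $N$ are completely absorbed into $J(R)$, so the off-diagonal corners die in the quotient).

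Next I would assemble the equivalence from three ingredients already available: Corollary \ref{2.8} (a ring is $n$-$\Delta$U iff its quotient by the Jacobson radical is), Proposition \ref{2.3} (a direct product is $n$-$\Delta$U iff each factor is), and the isomorphism above. Concretely, $R$ is $n$-$\Delta$U $\iff$ $R/J(R)$ is $n$-$\Delta$U $\iff$ $A/J(A)\times B/J(B)$ is $n$-$\Delta$U $\iff$ both $A/J(A)$ and $B/J(B)$ are $n$-$\Delta$U $\iff$ both $A$ and $B$ are $n$-$\Delta$U, where the first and last equivalences use Corollary \ref{2.8}, the middle one uses Proposition \ref{2.3}, and the remaining one is the isomorphism. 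This closes the argument in both directions at once.

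Alternatively, for the forward implication one can bypass the radical computation entirely and argue via corner rings: $e:=\left(\begin{array}{ll}1_A & 0 \\ 0 & 0\end{array}\right)$ and $1-e$ are idempotents of $R$ with $eRe\cong A$ and $(1-e)R(1-e)\cong B$, so Proposition \ref{2.9} immediately gives that $A$ and $B$ are $n$-$\Delta$U whenever $R$ is. It is only the converse that genuinely consumes the hypotheses $MN\subseteq J(A)$ and $NM\subseteq J(B)$, through the description of $J(R)$.

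As for difficulty, there is essentially no obstacle here — in sharp contrast with Proposition \ref{3.14}, where the containments $MN\subseteq J(A)$ and $NM\subseteq J(B)$ had to be extracted from the $(2n-1)$-$\Delta$U hypothesis by exhibiting the unipotent matrices $\left(\begin{array}{ll}1 & m \\ 0 & 1\end{array}\right)$, $\left(\begin{array}{ll}1 & 0 \\ m' & 1\end{array}\right)$ and exploiting that $2n-1$ is invertible modulo $\Delta$. Here those containments are assumed outright, so the argument is a routine concatenation of Corollary \ref{2.8}, Proposition \ref{2.3}, and the standard formula for the Jacobson radical of a Morita context whose trace ideals lie inside the corner radicals. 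The only mild care needed is to verify that $R/J(R)\cong A/J(A)\times B/J(B)$ is multiplicative and not merely additive, which is immediate once $J(R)$ is identified.
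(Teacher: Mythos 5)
Your proof is correct and follows essentially the same route as the paper: identify $J(R)=\left(\begin{smallmatrix} J(A) & M \\ N & J(B)\end{smallmatrix}\right)$ via \cite[Lemma 3.1]{13}, pass to the isomorphism $R/J(R)\cong A/J(A)\times B/J(B)$, and conclude by combining Corollary~\ref{2.8} with Proposition~\ref{2.3}. The additional corner-ring argument for the forward implication via Proposition~\ref{2.9} is a harmless (and valid) bonus not present in the paper's proof.
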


\begin{proof}
In view of \cite [Lemma 3.1]{13}, we argue that $$J(R)=\begin{pmatrix}
		J(A) & M \\
		N & J(B)
\end{pmatrix}$$ and hence the isomorphism $\dfrac{R}{J(R)}\cong \dfrac{A}{J(A)}\times \dfrac{B}{J(B)}$ holds. Then, the result follows immediately from Corollary \ref{2.8} and Proposition \ref{2.3}.
\end{proof}

Now, let $R$, $S$ be two rings, and let $M$ be an $(R,S)$-bi-module such that the operation $(rm)s = r(ms$) is valid for all $r \in R$, $m \in M$ and $s \in S$. Given such a bi-module $M$, we can set

$$
{\rm T}(R, S, M) =
\begin{pmatrix}
	R& M \\
	0& S
\end{pmatrix}
=
\left\{
\begin{pmatrix}
	r& m \\
	0& s
\end{pmatrix}
: r \in R, m \in M, s \in S
\right\},
$$
where it forms a ring with the usual matrix operations. The so-stated formal matrix ${\rm T}(R, S, M)$ is called a {\it formal triangular matrix ring}. In Proposition \ref{3.6}, if we set $N =\{0\}$, then we will obtain the following.

\begin{corollary}\label{3.7}
Let $R,S$ be rings and let $M$ be an $(R,S)$-bi-module. Then, the formal triangular matrix ring ${\rm T}(R,S,M)$ is an $n$-$\Delta U$ ring if, and only if, both $R$ and $S$ are $n$-$\Delta U$.
\end{corollary}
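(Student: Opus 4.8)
The plan is to derive Corollary~\ref{3.7} as the special case $N=\{0\}$ of Proposition~\ref{3.6}. First I would observe that when $N=\{0\}$ the Morita context ring $R'=\left(\begin{smallmatrix} R & M \\ 0 & S\end{smallmatrix}\right)$ is precisely the formal triangular matrix ring ${\rm T}(R,S,M)$: the multiplication given by the Morita context formulas collapses to the usual multiplication of upper triangular matrices, since every product involving an entry of $N$ vanishes, and the bi-linear maps $\phi,\psi$ are forced to be zero. Thus ${\rm T}(R,S,M)$ is literally an instance of the Morita context construction.

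Second, I would check that the hypotheses of Proposition~\ref{3.6} are automatically satisfied in this degenerate case: the trace ideals $MN$ and $NM$ are both $\{0\}$ because $N=\{0\}$, so trivially $MN=\{0\}\subseteq J(R)$ and $NM=\{0\}\subseteq J(S)$. Hence Proposition~\ref{3.6} applies with no extra assumptions, and it yields at once that ${\rm T}(R,S,M)$ is $n$-$\Delta$U if and only if both $R$ and $S$ are $n$-$\Delta$U. That is exactly the claimed equivalence, so the corollary follows.

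Since the paper sets this up explicitly (the sentence preceding the corollary already says ``if we set $N=\{0\}$''), there is essentially no obstacle here; the only thing to be careful about is confirming that the Morita context multiplication genuinely reduces to triangular matrix multiplication when $N$ is trivial — a routine but necessary sanity check. I would phrase the proof in one or two lines: ``Taking $N=\{0\}$ in Proposition~\ref{3.6}, the Morita context ring becomes the formal triangular matrix ring ${\rm T}(R,S,M)$, and the conditions $MN\subseteq J(R)$, $NM\subseteq J(S)$ hold vacuously; the conclusion is then immediate.''
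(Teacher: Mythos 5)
Your proposal is correct and coincides with the paper's own route: the corollary is obtained by setting $N=\{0\}$ in Proposition~\ref{3.6}, noting that the Morita context ring then reduces to ${\rm T}(R,S,M)$ and that $MN=NM=\{0\}$ makes the hypotheses $MN\subseteq J(R)$, $NM\subseteq J(S)$ hold vacuously. Nothing further is needed.
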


Given a ring $R$ and a central element $s$ of $R$, the $4$-tuple $\begin{pmatrix}
	R & R\\
	R & R
\end{pmatrix}$ becomes a ring with addition component-wise and with multiplication defined by
$$\begin{pmatrix}
	a_{1} & x_{1}\\
	y_{1} & b_{1}
\end{pmatrix}\begin{pmatrix}
	a_{2} & x_{2}\\
	y_{2} & b_{2}
\end{pmatrix}=\begin{pmatrix}
	a_{1}a_{2}+sx_{1}y_{2} & a_{1}x_{2}+x_{1}b_{2} \\
	y_{1}a_{2}+b_{1}y_{2} & sy_{1}x_{2}+b_{1}b_{2}
\end{pmatrix}.$$
This ring is denoted by $K_s(R)$. A {\it Morita context}
$\begin{pmatrix}
	A & M\\
	N & B
\end{pmatrix}$ with $A=B=M=N=R$ is called a {\it generalized matrix ring} over $R$. It was observed by Krylov in \cite{15} that a ring $S$ is a generalized matrix ring over $R$ if, and only if, $S=K_s(R)$ for some $s\in C(R)$. Here, $MN=NM=sR$, so $MN\subseteq J(A)\Longleftrightarrow s\in J(R)$, $NM\subseteq J(B)\Longleftrightarrow s\in J(R)$.

\medskip

We, thus, have all the instruments to prove the following.

\begin{corollary}\label{3.8}
Let $R$ be a ring and $s\in C(R)\cap J(R)$. Then, $K_s(R)$ is an $n$-$\Delta U$ ring if, and only if, $R$ is $n$-$\Delta U$.
\end{corollary}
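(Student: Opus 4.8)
The plan is to recognize $K_s(R)$ as a Morita context ring to which Proposition~\ref{3.6} applies essentially verbatim. As recalled in the paragraph immediately preceding the statement, a generalized matrix ring over $R$ is exactly a Morita context $\begin{pmatrix} A & M \\ N & B \end{pmatrix}$ with $A=B=M=N=R$, and by Krylov's observation (\cite{15}) every such ring has the form $K_s(R)$ for a suitable central element $s$; moreover in this setting the trace ideals are $MN=NM=sR$.

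First I would verify the hypotheses of Proposition~\ref{3.6}, namely $MN\subseteq J(A)$ and $NM\subseteq J(B)$, which here both amount to the single inclusion $sR\subseteq J(R)$. This is immediate from the assumption $s\in C(R)\cap J(R)$: since $J(R)$ is a two-sided ideal and $s\in J(R)$, we get $sR\subseteq J(R)$ at once. (The centrality of $s$ is needed only to make $K_s(R)$ a well-defined ring, not for this inclusion.)

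Then I would simply apply Proposition~\ref{3.6} with $A=B=R$: it gives that $K_s(R)$ is an $n$-$\Delta U$ ring if, and only if, both copies of $R$ are $n$-$\Delta U$ rings, that is, if and only if $R$ is $n$-$\Delta U$. This completes the argument.

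I do not expect any genuine obstacle, as the statement is a direct specialization of Proposition~\ref{3.6}; the only point deserving a moment of care is reading off the equalities $MN=NM=sR$ from the displayed multiplication rule for $K_s(R)$, after which the conclusion is automatic.
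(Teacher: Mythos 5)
Your proof is correct and is exactly the argument the paper intends: the corollary is stated right after the paper observes that for $K_s(R)$ viewed as a Morita context with $A=B=M=N=R$ one has $MN=NM=sR$, so $s\in J(R)$ gives the hypotheses of Proposition~\ref{3.6}. The paper leaves this deduction implicit, and your write-up simply makes it explicit in the same way.
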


Following Tang and Zhou (cf. \cite{14}), for $n\geq 2$ and for $s\in C(R)$, the $n\times n$ {\it formal matrix ring} over $R$, defined with the usage of $s$ and denoted by $M_{n}(R;s)$, is the set of all $n\times n$ matrices over $R$ with the usual addition of matrices and with the multiplication defined below:

\medskip

\noindent For $(a_{ij})$ and $(b_{ij})$ in ${\rm M}_{n}(R;s)$,
$$(a_{ij})(b_{ij})=(c_{ij}), \quad \text{where} ~~ (c_{ij})=\sum s^{\delta_{ikj}}a_{ik}b_{kj}.$$
Here, $\delta_{ijk}=1+\delta_{ik}-\delta_{ij}-\delta_{jk}$, where $\delta_{jk}$, $\delta_{ij}$, $\delta_{ik}$ are the standard {\it Kroncker} delta symbols.

\medskip

We now offer the validity of the following.

\begin{corollary}\label{3.9}
Let $R$ be a ring and $s\in C(R)\cap J(R)$. Then, for any $k\geq 1$, $M_{n}(R;s)$ is a $k$-$\Delta U$ ring if, and only if, $R$ is $k$-$\Delta U$.
\end{corollary}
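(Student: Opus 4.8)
One direction is essentially free: if $M_{n}(R;s)$ is $k$-$\Delta$U, then its corner $e_{11}M_{n}(R;s)e_{11}\cong R$ is $k$-$\Delta$U by Proposition~\ref{2.9}. For the substantive direction — and in fact to get both at once — the plan is to pass to $\overline{S}:=S/J(S)$, where $S:=M_{n}(R;s)$, and to show that $\overline{S}\cong(R/J(R))^{n}$; then $S$ is $k$-$\Delta$U iff $\overline{S}$ is (Corollary~\ref{2.8}), iff $R/J(R)$ is (Proposition~\ref{2.3}), iff $R$ is (Corollary~\ref{2.8} again). Throughout, $n$ denotes the matrix size and $k$ the $\Delta$U-index.

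The identification $\overline{S}\cong(R/J(R))^{n}$ rests on the claim that, because $s\in J(R)$,
\[
J(S)=\{(a_{ij})\in S:\ a_{ii}\in J(R)\ \text{for every}\ i\}=:\mathfrak a .
\]
Granting this, the diagonal map $S\to\prod_{i=1}^{n}R/J(R)$, $(a_{ij})\mapsto(\overline{a_{11}},\dots,\overline{a_{nn}})$, is a surjective ring homomorphism with kernel $\mathfrak a=J(S)$: it is obviously additive and onto, and it is multiplicative because the defining rule makes the $(i,i)$-entry of a product $(a_{ij})(b_{ij})$ equal to $a_{ii}b_{ii}+\sum_{k\neq i}s^{2}a_{ik}b_{ki}$, whose correction terms lie in $s^{2}R\subseteq J(R)$ and so vanish modulo $J(R)$. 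Hence $\overline{S}\cong(R/J(R))^{n}$, as wanted.

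It remains to prove $J(S)=\mathfrak a$, which I would do by induction on $n$. For $n=1$ this is the definition of $J(R)$, and for $n=2$ it is exactly \cite[Lemma~3.1]{13} applied to $K_{s}(R)=M_{2}(R;s)$, whose trace ideals are $MN=NM=sR\subseteq J(R)$. For the inductive step, take $e$ to be the idempotent of $S$ with $1$ in the $(1,1)$-slot and $0$ elsewhere and regard $S$ as the Morita context ring with corners $A:=eSe\cong R$ and $B:=(1-e)S(1-e)\cong M_{n-1}(R;s)$ and off-diagonal bimodules $M:=eS(1-e)$, $N:=(1-e)Se$ (the two isomorphisms being clear once one writes out the twisted multiplication and notes that the exponents $\delta_{ikj}$ depend only on the coincidence pattern of the indices). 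A routine computation with these exponents then shows $MN\subseteq s^{2}R\subseteq J(R)=J(A)$, while every entry of $NM$ lies in $sR\subseteq J(R)$; in particular the diagonal entries of $NM$ lie in $J(R)$, so the inductive hypothesis yields $NM\subseteq J(M_{n-1}(R;s))=J(B)$. Now \cite[Lemma~3.1]{13} applies and gives
\[
J(S)=\begin{pmatrix} J(A) & M\\ N & J(B)\end{pmatrix},
\]
which, after translating back through the inductive description of $J(M_{n-1}(R;s))$, is precisely the set $\mathfrak a$ of matrices of $S$ whose diagonal entries all lie in $J(R)$.

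The only genuine difficulty I anticipate is the bookkeeping in this induction — keeping the Peirce/Morita-context identification of $(1-e)S(1-e)$ with $M_{n-1}(R;s)$ under control and reading off the two trace-ideal containments from the cumbersome multiplication law of $M_{n}(R;s)$. Once those are secured, the rest is a direct appeal to earlier results. (Alternatively, having established the Morita-context decomposition together with $MN\subseteq J(A)$ and $NM\subseteq J(B)$, one can skip the explicit radical computation and instead invoke Proposition~\ref{3.6} to reduce the $k$-$\Delta$U property of $S$ to that of $R$ and of $M_{n-1}(R;s)$, and then induct with base case Corollary~\ref{3.8}; this variant still requires the inductive description of $J(M_{n-1}(R;s))$ to check $NM\subseteq J(B)$.)
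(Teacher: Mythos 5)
Your proposal is correct, and at its core it runs on the same engine as the paper's proof: an induction on the matrix size that peels off one row and column of $M_n(R;s)$ as a Morita context, checks that the trace ideals land in the respective radicals, and invokes \cite[Lemma 3.1]{13} (equivalently Proposition~\ref{3.6}), with the base case handled through the identification of $M_2(R;s)$ with a generalized matrix ring and Corollary~\ref{3.8}. The packaging differs: the paper applies Proposition~\ref{3.6} at each inductive step and never names $J(M_n(R;s))$, whereas you push the induction one step further to the explicit description $J(M_n(R;s))=\{(a_{ij}): a_{ii}\in J(R)\ \text{for all}\ i\}$, identify $M_n(R;s)/J(M_n(R;s))\cong (R/J(R))^n$ via the diagonal map, and finish with Corollary~\ref{2.8} and Proposition~\ref{2.3}. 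This buys a cleaner endgame and, more to the point, it supplies a detail the paper elides: the paper's step from ``$xy\in sA$'' to ``$MN\subseteq J(A)$'' for $A=M_{n-1}(R;s)$ is not immediate from $s\in J(R)$ alone, but needs exactly the inductive knowledge of $J(M_{n-1}(R;s))$ that you carry along (your closing parenthetical acknowledges this dependence explicitly). Two trivial slips in your write-up: $M_2(R;s)$ is $K_{s^2}(R)$ rather than $K_s(R)$ (the relevant exponent is $\delta_{121}=2$), so the trace ideals there are $s^2R$ rather than $sR$; neither affects anything since $s^2\in J(R)$.
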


\begin{proof}
If $n = 1$, then ${\rm M}_n(R;s) = R$. So, in this case, there is nothing to prove. Let $n=2$. By the definition of ${\rm M}_n(R;s)$, we have ${\rm M}_2 (R;s) \cong {\rm K}_{s^2} (R)$. Apparently, $s^2 \in J(R) \cap C(R)$, so the claim holds for $n = 2$ with the help of Corollary \ref{3.8}.
	
To proceed by induction, assume now that $n>2$ and that the claim holds for ${\rm M}_{n-1} (R;s)$. Set $A := {\rm M}_{n-1} (R;s)$. Then, ${\rm M}_n (R;s) =
\begin{pmatrix}
		A & M\\
		N & R
\end{pmatrix}$
is a {\it Morita context}, where $$M =
\begin{pmatrix}
		M_{1n}\\
		\vdots\\
		M_{n-1, n}
\end{pmatrix}
\quad \text{and} \quad  N = (M_{n1} \dots M_{n, n-1})$$ with $M_{in} = M_{ni} = R$ for all $i = 1, \dots, n-1,$ and
\begin{align*}
		&\psi: N \otimes M \rightarrow N, \quad n \otimes m \mapsto snm\\
		&\phi : M \otimes N \rightarrow M, \quad  m \otimes n \mapsto smn.
\end{align*}
Besides, for $x =
\begin{pmatrix}
		x_{1n}\\
		\vdots\\
		x_{n-1, n}
\end{pmatrix}
	\in M$ and $y = (y_{n1} \dots y_{n, n-1}) \in N$, we write $$xy =
\begin{pmatrix}
		s^2x_{1n}y_{n1} & sx_{1n}y_{n2} & \dots & sx_{1n}y_{n, n-1}\\
		sx_{2n}y_{n1} & s^2x_{2n}y_{n2} & \dots & sx_{2n}y_{n, n-1}\\
		\vdots & \vdots &\ddots & \vdots\\
		sx_{n-1, n}y_{n1} & sx_{n-1, n}y_{n2} & \dots & s^2x_{n-1, n}y_{n, n-1}
\end{pmatrix} \in sA$$ and $$yx = s^2y_{n1}x_{1n} + s^2y_{n2}x_{2n} + \dots + s^2y_{n, n-1}x_{n-1, n} \in s^2 R.$$ Since $s \in J(R)$, we see that $MN \subseteq J(A)$ and $NM\subseteq J(A)$. Thus, we obtain that $$\frac{{\rm M}_n (R; s)}{J({\rm M}_n (R; s))} \cong \frac{A}{J (A)} \times \frac{R}{J (R)}.$$ Finally, the induction hypothesis and Proposition \ref{3.6} yield the claim after all.
\end{proof}

A {\it Morita context} $\begin{pmatrix}
	A & M\\
	N & B
\end{pmatrix}$ is called {\it trivial} if the context products are trivial, i.e., $MN=(0)$ and $NM=(0)$. Consulting with \cite{20}, we now are able to state that
$$\begin{pmatrix}
	A & M\\
	N & B
\end{pmatrix}\cong {\rm T}(A\times B, M\oplus N),$$
where
$\begin{pmatrix}
	A & M\\
	N & B
\end{pmatrix}$ is a trivial Morita context. We, therefore, begin the proof-check of the following.

\begin{corollary}\label{3.10}
The trivial Morita context
$\begin{pmatrix}
		A & M\\
		N & B
\end{pmatrix}$ is an $n$-$\Delta U$ ring if, and only if, both $A$ and $B$ are $n$-$\Delta U$.
\end{corollary}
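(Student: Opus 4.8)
The plan is to reduce the statement entirely to the two already-proved results on trivial extensions and on direct products, exploiting the structural isomorphism recorded immediately before the statement. The crucial point is that, since the context products $MN$ and $NM$ vanish, the $4$-tuple $\left(\begin{smallmatrix} A & M \\ N & B \end{smallmatrix}\right)$ is isomorphic, as a ring, to the trivial extension ${\rm T}(A\times B,\,M\oplus N)$, where $M\oplus N$ is regarded as a bi-module over the direct product $A\times B$ (with $A$ acting on $M$ on the left and on $N$ on the right, $B$ acting on $N$ on the left and on $M$ on the right); the triviality of the context is exactly what makes this bi-module action well behaved and consistent with the cited description from \cite{20}.

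First I would invoke Proposition \ref{3.4}(1), applied with the base ring $R := A\times B$ and the bi-module $M\oplus N$: it gives that ${\rm T}(A\times B,\,M\oplus N)$ is an $n$-$\Delta U$ ring if, and only if, $A\times B$ is an $n$-$\Delta U$ ring. Transporting this across the isomorphism above, the trivial Morita context is $n$-$\Delta U$ precisely when $A\times B$ is $n$-$\Delta U$.

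Next I would apply Proposition \ref{2.3}, which asserts that a direct product of rings is $n$-$\Delta U$ if, and only if, every factor is $n$-$\Delta U$; specializing to the two-factor product $A\times B$ shows that $A\times B$ is $n$-$\Delta U$ exactly when both $A$ and $B$ are. Concatenating the two equivalences delivers the conclusion of Corollary \ref{3.10}.

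I do not expect a real obstacle here: the only step demanding a sentence of care is verifying that the bi-module structure of $M\oplus N$ over $A\times B$ coming from the trivial context is precisely the one needed for the isomorphism with ${\rm T}(A\times B,\,M\oplus N)$, and this is already packaged in the reference \cite{20}. Thus the argument is essentially a two-line chaining of Proposition \ref{3.4} and Proposition \ref{2.3}, with no new computation required.
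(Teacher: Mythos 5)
Your argument is correct and coincides with the paper's own proof: both pass through the isomorphism $\left(\begin{smallmatrix} A & M \\ N & B \end{smallmatrix}\right)\cong {\rm T}(A\times B, M\oplus N)$ and then chain Proposition~\ref{3.4}(i) with Proposition~\ref{2.3}. No gaps.
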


\begin{proof}
It is apparent to see that the two isomorphisms
	$$\begin{pmatrix}
		A & M\\
		N & B
	\end{pmatrix} \cong {\rm T}(A\times B,M\oplus N) \cong \begin{pmatrix}
		A\times B & M\oplus N\\
		0 & A \times B
	\end{pmatrix}$$ are true. Then, the rest of the proof follows by combining Proposition \ref{3.4}(i) and \ref{2.3}, as needed.
\end{proof}

As usual, for an arbitrary ring $R$ and an arbitrary group $G$, the symbol $RG$ stands for the {\it group ring} of $G$ over $R$. Standardly, $\varepsilon(RG)$ designates the kernel of the classical {\it augmentation map} $\varepsilon: RG\to R$, defined by $\varepsilon (\displaystyle\sum_{g\in G}a_{g}g)=\displaystyle\sum_{g\in G}a_{g}$, and this ideal is called the {\it augmentation ideal} of $RG$.

\medskip

Here we will explore group rings that are $n$-$\Delta U$, as for the case of JU group rings we refer the interested reader to \cite{KZ}. Specifically, we continue by establishing the next three technicalities.

\begin{lemma}
If $RG$ is an $n$-\(\Delta U\) ring, then $R$ is too $n$-\(\Delta U\).
\end{lemma}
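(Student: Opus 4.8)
The natural tool here is the augmentation map $\varepsilon : RG \to R$ together with Proposition~\ref{2.5}. First I would observe that $\varepsilon$ is a surjective ring homomorphism, so $R$ is an epimorphic image of $RG$. Next, recall that $R$ embeds in $RG$ as the subring $R\cdot 1_G$ via $r \mapsto r\cdot 1_G$, and that $\varepsilon(r\cdot 1_G) = r$ for all $r \in R$. Consequently, if $u \in U(R)$ with inverse $u^{-1}$, then $u\cdot 1_G \in U(RG)$ (with inverse $u^{-1}\cdot 1_G$) and $\varepsilon(u\cdot 1_G) = u$; hence every unit of $R$ lifts to a unit of $RG$.

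With these two facts in hand, Proposition~\ref{2.5} applies verbatim — taking the ambient ring to be $RG$ and its epimorphic image to be $R$ — and yields that $R$ is $n$-$\Delta$U. Concretely, unwinding that proposition: given $u \in U(R)$, pick the lift $\tilde{u} = u\cdot 1_G \in U(RG)$; since $RG$ is $n$-$\Delta$U we have $\tilde{u}^{\,n} - 1 \in \Delta(RG)$, and applying the ring epimorphism $\varepsilon$ sends this element into $\Delta(R)$, giving $u^n - 1 = \varepsilon(\tilde{u}^{\,n}-1) \in \Delta(R)$. Thus $R$ is $n$-$\Delta$U.

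I do not anticipate a genuine obstacle: the only point requiring (trivial) verification is that units of $R$ lift to units of $RG$, which is immediate from the splitting $r \mapsto r\cdot 1_G$ of $\varepsilon$. One could alternatively avoid citing Proposition~\ref{2.5} and argue directly that a surjective ring homomorphism carries $\Delta$ into $\Delta$ when units lift, but since that is exactly the content of Proposition~\ref{2.5}, invoking it is the cleanest route.
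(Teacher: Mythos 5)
Your proposal is correct. It takes a mildly but genuinely different route from the paper's: the paper regards $R$ as a unital subring of $RG$ (in the spirit of Proposition~\ref{2.6}), writes $u^n=1+r$ with $r=u^n-1\in\Delta(RG)\cap R$, and then verifies $r\in\Delta(R)$ directly from the characterization $r+U(R)\subseteq U(R)$, using the fact that $U(RG)\cap R\subseteq U(R)$; you instead regard $R$ as a quotient of $RG$ via the augmentation map $\varepsilon$, note that units of $R$ lift along the splitting $r\mapsto r\cdot 1_G$, and invoke Proposition~\ref{2.5}. The two arguments are close cousins (both exploit that $R$ is a retract of $RG$), but yours has the advantage that the only nontrivial fact needed --- that a surjection with unit lifting carries $\Delta$ into $\Delta$ --- is already packaged in Proposition~\ref{2.5}, whereas the paper's ``obviously true'' step $U(RG)\cap R\subseteq U(R)$ itself requires a small verification (most cleanly done by applying $\varepsilon$ to the inverse, or by comparing coefficients at $1_G$). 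So your version is, if anything, slightly more self-contained relative to the results already established in the paper.
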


\begin{proof}
Choosing $u \in U(R)$, then $u \in U(RG)$. Thus, $u^n=1+r$, where $r \in \Delta(RG)$. Since $r=1-u^n \in R$, it suffices to show that $r \in \Delta(R)$, which is obviously true, because, for any $v \in U(R) \subseteq U(RG)$, we have $v-r \in U(RG) \cap R \subseteq U(R)$. Therefore, $r \in \Delta(R)$, as required.
\end{proof}

We say that a group $G$ is a {\it $p$-group} if every element of $G$ is a power of the prime number $p$. Besides, a group $G$ is said to be {\it locally finite} if every finitely generated subgroup is finite.

\begin{lemma}\label{4.14} \cite[Lemma $2$]{16}.
Let $p$ be a prime with $p\in J(R)$. If $G$ is a locally finite $p$-group, then $\varepsilon(RG) \subseteq J(RG)$.
\end{lemma}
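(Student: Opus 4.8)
\textbf{Proof plan for Lemma \ref{4.14}.}

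The plan is to exploit the hypothesis $p \in J(R)$ to first reduce modulo $pR$ (or rather modulo $J(R)$) and then to argue locally, using the classical fact that the augmentation ideal of a finite $p$-group algebra over a ring in which $p$ is nilpotent is itself nilpotent. Concretely, since the statement is attributed to \cite{16}, I would reproduce the following skeleton. First, recall that an element $a\in RG$ lies in $J(RG)$ if and only if $RGa$ (equivalently the principal left ideal it generates, together with quasi-regularity of all its multiples) behaves well; the cleanest route is to show that every element of $\varepsilon(RG)$ is \emph{left quasi-regular} and that $\varepsilon(RG)$ is a two-sided ideal, which it visibly is as the kernel of a ring homomorphism.

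First I would treat the case where $G$ is a \emph{finite} $p$-group. In this situation one shows that $\varepsilon(RG)$ is a nil (indeed nilpotent) ideal: pick $\alpha=\sum_{g\in G} a_g g$ with $\sum_g a_g=0$, write $\alpha=\sum_g a_g(g-1)$, and observe that the elements $g-1$ generate $\varepsilon(RG)$ as a left ideal. Over $\mathbb{Z}/p\mathbb{Z}$ it is standard that the augmentation ideal of $\mathbb{F}_p G$ is nilpotent when $G$ is a finite $p$-group (its nilpotency index is bounded in terms of $|G|$); since $p\in J(R)$, powers of $p$ shrink into $J(R)$, and a short induction on the nilpotency class combined with the fact that $J(R)\cdot RG\subseteq J(RG)$ (because $J(R)RG$ is an ideal consisting of elements $\beta$ with $1-\beta$ invertible, using that $J(R)$ is quasi-regular and the augmentation controls invertibility over $R$) gives that $\varepsilon(RG)$ is contained in $J(RG)$. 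More precisely, $\varepsilon(RG)^k \subseteq p\,RG + (\text{something already in } J)$ for suitable $k$, and iterating drives everything into $J(RG)$.

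Next I would pass to the locally finite case by a direct quasi-regularity argument. Let $\alpha\in\varepsilon(RG)$; since $\alpha$ has finite support, the subgroup $H=\langle \operatorname{supp}\alpha\rangle$ is finite, so $\alpha\in\varepsilon(RH)$ and $RH$ is a subring of $RG$ containing $1$. By the finite case, $\alpha\in J(RH)$, hence $\alpha$ is quasi-regular \emph{in $RH$}, so there is $\beta\in RH\subseteq RG$ with $\alpha+\beta-\beta\alpha=0=\alpha+\beta-\alpha\beta$; thus $\alpha$ is quasi-regular in $RG$. Since $\varepsilon(RG)$ is a two-sided ideal all of whose elements are quasi-regular, $\varepsilon(RG)\subseteq J(RG)$, as desired.

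The main obstacle is the finite-group step, specifically the transition from the coefficient ring $\mathbb{F}_p$ (where nilpotency of the augmentation ideal is textbook) to an arbitrary ring $R$ with merely $p\in J(R)$: one must be careful that $J(R)$ need not be nilpotent, so $\varepsilon(RG)$ itself need not be nilpotent, and the cleanest fix is to prove the weaker ``$\varepsilon(RG)\subseteq J(RG)$'' directly by showing $\varepsilon(RG)/(J(R)RG\cap\varepsilon(RG))$ is nilpotent in $RG/J(R)RG\cong (R/J(R))G$ together with $J(R)RG\subseteq J(RG)$. Granting the cited \cite[Lemma 2]{16}, however, this entire step may simply be quoted.
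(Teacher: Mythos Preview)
The paper does not prove Lemma~\ref{4.14} at all: it is stated as a citation of \cite[Lemma~2]{16} and used as a black box in the subsequent lemma on group rings. There is therefore nothing in the paper to compare your argument against; your proposal already goes well beyond what the authors do.

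For what it is worth, your sketch is the standard route and is essentially correct. Two small remarks. First, for the finite step you worry about passing from $\mathbb{F}_p$ to a general $R$ with $p\in J(R)$; the clean way is exactly the one you identify at the end: set $\bar R=R/J(R)$, note that $p=0$ in $\bar R$ so $\varepsilon(\bar R G)$ is nilpotent (as $\bar R G\cong \bar R\otimes_{\mathbb{F}_p}\mathbb{F}_pG$ and $\varepsilon(\mathbb{F}_pG)$ is nilpotent for a finite $p$-group), and combine this with $J(R)\,RG\subseteq J(RG)$. The latter inclusion is unproblematic when $G$ is \emph{finite}: $RG$ is a free left $R$-module of finite rank, left multiplication by $1-\alpha$ with $\alpha\in J(R)G$ is represented by $I-M$ with $M\in M_{|G|}(J(R))=J(M_{|G|}(R))$, hence is invertible, and bijectivity of left multiplication forces $1-\alpha\in U(RG)$. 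Second, your reduction from the locally finite case to the finite case via quasi-regularity in the subring $RH$ is exactly right; since $\varepsilon(RG)$ is a two-sided ideal whose elements are all quasi-regular, it lies in $J(RG)$.
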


\begin{lemma}
If \( R \) is an $n$-\(\Delta U\) ring and \( G \) is a locally finite \( p \)-group, where \( p \) is a prime number such that \( p \in J(R) \), then \( RG \) is an $n$-\(\Delta U\) ring.
\end{lemma}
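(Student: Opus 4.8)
The plan is to exploit the augmentation ideal $\varepsilon(RG)$ together with the quotient $RG/\varepsilon(RG)\cong R$, and to invoke Theorem~\ref{2.7}. First I would observe that under the hypotheses — $G$ a locally finite $p$-group and $p\in J(R)$ — Lemma~\ref{4.14} applies directly to give $\varepsilon(RG)\subseteq J(RG)$. Thus $\varepsilon(RG)$ is an ideal of $RG$ contained in the Jacobson radical, and it is the kernel of the augmentation map $\varepsilon\colon RG\to R$, so $RG/\varepsilon(RG)\cong R$.

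Next I would apply Theorem~\ref{2.7} with the choice $I:=\varepsilon(RG)$ and ambient ring $RG$: since $I\subseteq J(RG)$ is an ideal, $RG$ is $n$-$\Delta$U if and only if $RG/I$ is $n$-$\Delta$U. But $RG/I\cong R$, which is $n$-$\Delta$U by hypothesis, so the quotient is $n$-$\Delta$U, and therefore $RG$ is $n$-$\Delta$U. This completes the argument.

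I do not expect any serious obstacle here: the statement is essentially a packaging of Lemma~\ref{4.14} and Theorem~\ref{2.7}, and the only point requiring a word of care is the verification that $\varepsilon(RG)$ really is a two-sided ideal with $RG/\varepsilon(RG)\cong R$ — but this is the standard and well-known property of the augmentation map, so it can be cited or asserted without further ado. One might also note in passing that this result is the natural $n$-$\Delta$U analogue of the corresponding statement for JU group rings referenced earlier (cf.\ \cite{KZ}), and that its converse is covered by the preceding lemma, so together they do not quite yield an ``if and only if'' (the passage from $R$ $n$-$\Delta$U to $RG$ $n$-$\Delta$U needs the extra hypotheses on $G$ and $p$), but in the locally finite $p$-group case with $p\in J(R)$ one does get the equivalence $RG$ is $n$-$\Delta$U $\iff$ $R$ is $n$-$\Delta$U.
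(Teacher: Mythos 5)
Your proposal is correct and follows exactly the paper's own argument: apply Lemma~\ref{4.14} to get $\varepsilon(RG)\subseteq J(RG)$, note $RG/\varepsilon(RG)\cong R$, and conclude via Theorem~\ref{2.7}. Nothing to add.
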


\begin{proof}
One looks that Lemma \ref{4.14} tells us that $\varepsilon(RG) \subseteq J(RG)$. On the other hand, since the isomorphism \( RG/\varepsilon(RG) \cong R \) holds, Theorem \ref{2.7} is a guarantor that \( RG \) is an $n$-\(\Delta U\) ring, as stated.
\end{proof}


\medskip

\noindent{\bf Funding:} The work of the first-named author, P.V. Danchev, is partially supported by the project Junta de Andaluc\'ia under Grant FQM 264. All other four authors are supported by Bonyad-Meli-Nokhbegan and receive funds from this foundation.

\vskip5.0pc


\begin{thebibliography}{99}
	
\bibitem{10}
A. Badawi, {\it On abelian $\pi$-regular rings}, Commun. Algebra {\bf 25}(4) (1997), 1009–-1021.

\bibitem{21}
W. Chen, {\it On constant products of elements in skew polynomial rings}, Bull. Iran. Math. Soc. {\bf 41}(2)(2015), 453--462.

\bibitem{Danew}
P. V. Danchev, {\it Rings with Jacobson units}, Toyama Math. J. {\bf 38}(1) (2016), 61--74.

\bibitem{4}
P. V. Danchev, {\it On exchange $\pi$-UU unital rings}, Toyama Math. J. {\bf 39}(1) (2017), 1--7.

\bibitem{Dannew}
P. V. Danchev, {\it On exchange $\pi$-JU unital rings}, Rend. Sem. Mat. Univ. Pol. Torino {\bf 77}(1) (2019), 13--23.

\bibitem{19}
P. V. Danchev, A. Javan and A. Moussavi, {\it Rings with $u^n-1$ nilpotent for each unit $u$}, J. Algebra Appl. {\bf 25} (2026).

\bibitem{11}
A. J. Diesl, {\it Nil clean rings}, J. Algebra {\bf 383} (2013), 197--211.	
	
\bibitem{1}
F. Karabacak, M. T. Ko\c{s}an, T. Quynh and D. Tai, {\it A generalization of UJ-rings}, J. Algebra Appl. {\bf 20} (2021).

\bibitem{KLM}
M. T. Ko\c{s}an, A. Leroy and J. Matczuk, {\it On UJ-rings}, Commun. Algebra {\bf 46}(5) (2018), 2297--2303.

\bibitem{3}
M. T. Ko\c{s}an, T. C. Quynh, T. Yildirim and J. $\check{Z}$emli$\check{c}$ka, {\it Rings such that, for each unit $u$, $u - u^n$ belongs to the Jacobson radical}, Hacettepe J. Math. Stat. {\bf 49}(4) (2020), 1397--1404.

\bibitem{20}
M. T. Ko\c{s}an, Y. C. Quynh and J. $\check{Z}$emli$\check{c}$ka, {\it UNJ-Rings}, J. Algebra Appl. {\bf 19} (2020).

\bibitem{KZ}
M. T. Ko\c{s}an and J. $\check{Z}$emli$\check{c}$ka, {\it Group rings that are UJ rings}, Commun. Algebra {\bf 49}(6) (2021),  2370--2377.

\bibitem{15}
P. A. Krylov, {\it Isomorphism of generalized matrix rings}, Algebra Logic {\bf 47} (4) (2008), 258-262.	

\bibitem{12}
T. Y. Lam, {\it A First Course in Noncommutative Rings}, Second Edition, Springer Verlag, New York, 2001.

\bibitem{18}
T. Y. Lam, {\it Exercises in Classical Ring Theory}, Second Edition, Springer Verlag, New York, 2003.

\bibitem{2}
A. Leroy and J. Matczuk, {\it Remarks on the Jacobson radical}, Rings, Modules and Codes, Contemp. Math., {\bf 727}, Am. Math. Soc., Providence, RI, 2019, 269--276.

\bibitem{9}
J. Levitzki, {\it On the structure of algebraic algebras and related rings}, Trans. Am. Math. Soc. {\bf 74} (1953), 384--409.

\bibitem{17}
A. R. Nasr-Isfahani, {\it On skew triangular matrix rings}, Commun. Algebra {\bf 39}(11) (2011), 4461--4469.

\bibitem{5}
W. K. Nicholson, {\it Lifting idempotents and exchange rings}, Trans. Am. Math. Soc. {\bf 229} (1977), 269-–278.		 

\bibitem{13}
G. Tang, C. Li and Y. Zhou, {\it Study of Morita contexts}, Commun. Algebra {\bf 42} (4) (2014), 1668--1681.

\bibitem{14}
G. Tang and Y. Zhou, {\it A class of formal matrix rings}, Linear Algebra Appl. {\bf 438} (2013), 4672--4688.

\bibitem{16}
Y. Zhou, {\it On clean group rings}, Advances in Ring Theory, Treads in Mathematics, Birkhauser, Verlag Basel/Switzerland, 2010, 335--345.

\end{thebibliography}
\end{document}